\newtheorem{thm}{Theorem}[section]
\newtheorem{cor}[thm]{Corollary}
\newtheorem{lem}[thm]{Lemma}
\newtheorem{prop}[thm]{Proposition}
\theoremstyle{definition}
\newtheorem{dfn}[thm]{Definition}
\newtheorem{dfns}[thm]{Definitions}
\newtheorem{ntn}[thm]{Notation}
\theoremstyle{remark}
\newtheorem{rmk}[thm]{Remark}
\newtheorem{rmks}[thm]{Remarks}
\newtheorem{example}[thm]{Example}
\newtheorem{examples}[thm]{Examples}
\numberwithin{equation}{section}
\newcommand{\CC}{\mathbb{C}}
\newcommand{\FF}{\mathbb{F}}
\newcommand{\NN}{\mathbb{N}}
\newcommand{\TT}{\mathbb{T}}
\newcommand{\ZZ}{\mathbb{Z}}
\newcommand{\RR}{\mathbb{R}}
\newcommand{\Cc}{\mathcal{C}}
\newcommand{\bI}{\mathbf{I}}
\newcommand{\1}{\mathbf{1}}
\newcommand{\Tor}{{\mathrm{Tor\,}}}
\newcommand{\id}{\operatorname{id}}
\newcommand{\image}{\operatorname{Im}}
\newcommand{\Aut}{\operatorname{Aut}}
\newcommand{\rank}{\operatorname{rank}}
\newcommand{\coker}{\operatorname{coker}}
\newcommand{\Hom}{\operatorname{Hom}}
\newcommand{\Ext}{\operatorname{Ext}}
\newcommand{\dom}{\operatorname{dom}}
\newcommand{\cod}{\operatorname{cod}}
\newcommand{\Top}{\operatorname{top}}
\newcommand{\op}{\operatorname{op}}
\newcommand{\sign}{\operatorname{sign}}
\newcommand{\Bcub}[1]{B^{#1}}
\newcommand{\Ccub}[1]{C^{#1}}
\newcommand{\Hcub}[1]{H^{#1}}
\newcommand{\Zcub}[1]{Z^{#1}}
\newcommand{\dcub}[1]{\delta^{#1}}
\newcommand{\Cube}[1]{\ensuremath{\mathds{1}_{#1}}}
\title[Homology of $k$-graphs]{Homology for higher-rank graphs and twisted $C^*$-algebras}
\author{Alex Kumjian}
\address{Alex Kumjian\\ Department of Mathematics (084)\\ University
of Nevada\\ Reno NV 89557-0084\\ USA} \email{alex@unr.edu}
\author{David Pask}
\address{David Pask, Aidan Sims\\ School of Mathematics and
Applied Statistics  \\
University of Wollongong\\
NSW  2522\\
AUSTRALIA} \email{dpask, asims@uow.edu.au}
\author{Aidan Sims}
\thanks{This research was supported by the ARC.  Part of the work was completed while the first author was employed at the
University of Wollongong on the ARC grant DP0984360.}
\date{7 October 2011}
\subjclass[2010]{Primary 46L05; Secondary 18G60, 55N10}
\keywords{higher-rank graph; $C^*$-algebra; homology; cubical set; topological realization}
\begin{document}

\begin{abstract}
We introduce a homology theory for $k$-graphs and explore its fundamental properties. We establish
connections with algebraic topology by showing that the homology of a $k$-graph coincides with the
homology of its topological realisation as described by Kaliszewski et al. We exhibit combinatorial
versions of a number of standard topological constructions, and show that they are compatible, from
a homological point of view, with their topological counterparts. We show how to twist the
$C^*$-algebra of a $k$-graph by a $\mathbb{T}$-valued $2$-cocycle and demonstrate that examples
include all noncommutative tori. In the appendices, we construct a cubical set
$\widetilde{Q}(\Lambda)$ from a $k$-graph $\Lambda$ and demonstrate that the homology and
topological realisation of $\Lambda$ coincide with those of
 $\widetilde{Q}(\Lambda)$ as defined by Grandis.
 \end{abstract}

\maketitle

\section{Introduction}
In this paper we initiate the study of homology for higher-rank graphs. We develop a suite of
fundamental results and techniques, and also establish connections with a number of related areas:
Via the topological realisations of $k$-graphs introduced in \cite{kkqs}, we establish %meaningful
connections with the cubical approach to algebraic topology used in \cite{Massey}. We also show in
an appendix how our approach connects the theory of $k$-graphs to the theory of cubical sets
discussed in, for example, \cite{BrownHiggins:CTGD81, FajstrupRaussenetal:tcs2006,
%Golasinski:BAPS79, AntoliniWest:MPCPS99,
Grandis2005, GrandisMauri:TAC03, Isaacson:JPAA11}. Our key motivation, however, is that our
homology theory and in particular the associated cohomology theory promises to have an interesting
application to $C^*$-algebras. We discuss this application in Section~\ref{sec:cohomology}: we
introduce the cohomology theory corresponding to our homology and show that $\TT$-valued
$2$-cocycles on a $k$-graph can be used to twist its $C^*$-algebra. As examples we obtain all
noncommutative tori and the  Heegaard-type quantum 3-spheres of Baum, Hajac, Matthes and
Szyma\'nski (see \cite{BaumHajacEtAl:K-th05}). A more detailed study of the cohomology of
$k$-graphs and the structure theory of the associated $C^*$-algebras will be the subject of future
work.

Higher-rank graphs, or $k$-graphs, were introduced by the first two authors in \cite{KP2000} as a
combinatorial model for the higher-rank Cuntz-Krieger algebras discovered and analysed by Robertson
and Steger \cite{RobertsonSteger:JRAM99}, and to unify the constructions of many other interesting
$C^*$-algebras \cite{KumjianPask:ETDS99}. The $C^*$-algebras of higher-rank graphs have been
studied by numerous authors over the last decade (see, for example, \cite{DavidsonYang:CJM09,
DavidsonYang:NYJM09, Evans:NYJM08, FarthingMuhlyEtAl:SF05, SkalskiZacharias:HJM08,
SkalskiZacharias:JOT10, Yamashita:xx09}).

The combinatorial properties of a $k$-graph suggest a sort of $k$-dimensional directed graph, and
this point of view has been borne out in numerous ways in the study of $k$-graph $C^*$-algebras.
More recently, however, it has begun to suggest relationships with
topology. These connections first arose in
\cite{PaskQuiggEtAl:NYJM04, PaskQuiggEtAl:JA05} where a theory of coverings and a notion of
fundamental group for $k$-graphs was developed. These notions closely parallel the topological
theory, but were motivated by $C^*$-algebraic considerations: the authors
demonstrated that coverings of $k$-graphs correspond to relative skew products which in turn
correspond to coaction crossed products and crossed products by homogeneous spaces.

The topological flavour of some of the results of~\cite{PaskQuiggEtAl:NYJM04, PaskQuiggEtAl:JA05}
suggest that each $k$-graph should have a topological realisation, which would be a $k$-dimensional
CW complex, and that the $k$-graph could profitably be viewed as a combinatorial version of its
topological realisation \cite[Section~6]{PaskQuiggEtAl:NYJM04}. Current work of the first and third
authors with Kaliszewski and Quigg \cite{kkqs} bears this idea out, showing in particular that the
fundamental groups of a $k$-graph and of its topological realisation are isomorphic and that many
well-known $k$-graph constructions are well-behaved with respect to fundamental groups.

In the current paper, we expand on this idea further by commencing the study of homology of
higher-rank graphs. After recalling basic definitions and notation in Section~\ref{sec:prelims}, we
proceed in Section~\ref{sec:homology} to define our homology, prove that it is a functor, show that
we can measure connectedness by the $0$\textsuperscript{th} homology group, and show that the
$1$-cycles correspond naturally to integer combinations of undirected cycles in the $k$-graph.

In Section~\ref{sec:theorems}, we prove analogs of a number of standard theorems in algebraic
topology for our homology. For example we show that the K\"{u}nneth formula holds for the homology
of a cartesian product of higher-rank graphs, and that the homology of the quotient of an acyclic
$k$-graph by a free action of a discrete group $G$ is isomorphic to the homology of $G$. We also
show that every automorphism of a $k$-graph induces a long exact sequence in homology which
corresponds exactly to the long exact sequence for a mapping torus.

In Section~\ref{sec:examples}, we use a combination of these results and direct calculation to
describe examples of $2$-graphs whose homology is identical to that of the sphere, the torus, the
Klein bottle and the projective plane respectively; we also present these examples in a way which
indicates that their topological realisations should coincide with these four spaces. Details of
these homeomorphisms will appear in \cite{kkqs}. In Section~\ref{sec:geometric}, we use an argument
based on that given by Hatcher for simplicial complexes and singular homology
\cite{Hatcher:AlgTop02}, to show that our homology for a $k$-graph agrees with the singular
homology of its topological realisation. This suggests strongly that our homology theory is a
reasonable one for $k$-graphs.

Section~\ref{sec:cohomology} gives a taste of the $C^*$-algebraic application which motivates our
study of homology for $k$-graphs: twisted $k$-graph $C^*$-algebras. We briefly discuss the
cohomology of a higher-rank graph and check that it satisfies the Universal Coefficient Theorem. We
introduce the notion of the $C^*$-algebra of higher-rank graph twisted by a $\TT$-valued 2-cocycle,
and show that the isomorphism class of the $C^*$-algebra depends only on the cohomology class of
the cocycle. We then consider some basic examples of finite $k$-graphs whose twisted $C^*$-algebras
capture the noncommutative tori and the Heegaard-type quantum 3-spheres of
\cite{BaumHajacEtAl:K-th05}.

Our homology is modeled on the cubical version of singular homology in \cite{Massey} and is closely
related to the homology of a cubical set introduced by Grandis \cite{Grandis2005}. We establish in
Appendix~\ref{app:grandis} that a $k$-graph $\Lambda$ determines a cubical set
$\widetilde{Q}(\Lambda)$, and that our homology of  $\Lambda$  is isomorphic to Grandis' homology
of $\widetilde{Q}(\Lambda)$. Hence, in principle, some of our earlier results
(Theorem~\ref{thm:freeaction} and part of the statement of Theorem~\ref{thm:kunneth}) could be
recovered from Grandis'. However we provide a self-contained treatment avoiding unnecessary
complications involving degeneracy maps: we believe that the resulting simplicity of presentation
justifies our approach. We demonstrate in Appendix~\ref{app:realisations}, that  the topological
realisation of a $k$-graph as described in \cite{kkqs} is homeomorphic to the topological
realisation, outlined in \cite{Grandis2005}, of the associated cubical set.

\subsection*{Acknowledgements.}
The idea that homology of $k$-graphs might be of interest first arose from the study of topological
realizations (see \cite{kkqs, PaskQuiggEtAl:NYJM04}), which was suggested by John Quigg. We thank
Mike Whittaker for a number of helpful discussions and in particular for his contributions to
Examples \ref{ex:Klein}~and~\ref{ex:proj plane}. The second author thanks his coauthors for their
hospitality.

\section{Preliminaries}\label{sec:prelims}

As in \cite{kps2}, in our definition of a $k$-graph we will allow for the possibility of $0$-graphs
with the convention that $\NN^0$ is the trivial semigroup $\{0\}$. We insist that all $k$-graphs
are nonempty.

We adopt the conventions of \cite{kps2, PaskQuiggEtAl:NYJM04} for $k$-graphs. Given a nonnegative
integer $k$, a \emph{$k$-graph} is a nonempty countable small category $\Lambda$ equipped with a
functor $d :\Lambda \to \NN^k$ satisfying the \emph{factorisation property}: for all $\lambda \in
\Lambda$ and $m,n \in \NN^k$ such that $d( \lambda )=m+n$ there exist unique $\mu ,\nu \in \Lambda$
such that $d(\mu)=m$, $d(\nu)=n$, and $\lambda=\mu \nu$. When $d(\lambda )=n$ we say $\lambda$ has
\emph{degree} $n$. We often use the same symbol $d$ to denote the degree functor in all $k$-graphs in this paper.

For $k \ge 1$, the standard generators of $\NN^k$ are denoted $e_1, \dots, e_k$, and for $n \in
\NN^k$ and $1 \le i \le k$ we write $n_i$ for the $i^{\rm th}$ coordinate of $n$. For $n = (n_1 ,
\ldots , n_k ) \in \NN^k$ let $| n | = \sum_{i=1}^k n_i$. If $\Lambda$ is a $k$-graph, then for
$\lambda \in \Lambda$, we write $|\lambda|$ for $|d(\lambda)|$. For $m,n \in \NN^k$ we write $m \le n$ if $m_i \le n_i$ for all $i \le k$.
We often implicitly identify $\NN^{k_1 + k_2} =\NN^{k_1} \times \NN^{k_2}$.

Given a $k$-graph $\Lambda$ and $n \in \NN^k$, we write $\Lambda^n$ for $d^{-1}(n)$.
The \emph{vertices} of $\Lambda$ are the
elements of $\Lambda^0$. The factorisation property implies that $o \mapsto \id_o$ is a bijection
from the objects of $\Lambda$ to $\Lambda^0$. The domain and codomain maps in the category
$\Lambda$ therefore determine maps $s,r : \Lambda \to \Lambda^0$: for $\alpha \in\Lambda$, the
\emph{source} $s(\alpha)$ of $\alpha$ is the identity morphism associated with the object
$\dom(\alpha)$ and similarly, $r(\alpha) = \id_{\cod(\alpha)}$. An \textit{edge} in a $k$-graph
is a morphism $f$ with $d(f) = e_i$ for some $i=1, \ldots , k$.  In keeping with graph terminology an
 element $\lambda \in \Lambda$ is often called a \emph{path}.

A $0$-graph is then a countable category whose only morphisms are the identity morphisms, which we
regard as a collection of isolated vertices.

Each $1$-graph $\Lambda$ is the path-category of the directed graph with vertices $\Lambda^0$ and
edges $\Lambda^1$ and range and source maps inherited from $\Lambda$. Conversely, if $E$ is a
directed graph, then its path-category $E^*$ is a $1$-graph under the length function.
This leads to the unusual convention that a path in $E$ is a sequence of edges $\alpha_1 \cdots
\alpha_n$ such that $s(\alpha_i) = r(\alpha_{i+1})$ for all $i$, and we write $r(\alpha) =
r(\alpha_1)$ and $s(\alpha) = s(\alpha_n)$.

Let $\lambda$ be an element of a $k$-graph $\Lambda$ and suppose that $0 \le m \le n \le d(\lambda)$. By the factorisation
property there exist unique elements $\alpha \in \Lambda^m$, $\beta \in \Lambda^{n-m}$ and $\gamma \in
\Lambda^{d(\lambda) - n}$ such that $\lambda = \alpha\beta\gamma$. We define $\lambda(m,n) := \beta$. We then have
$\lambda(0,m) = \alpha$ and $\lambda(n, d(\lambda)) = \gamma$. In particular, for $0 \le m \le d(\lambda)$,
\[
\lambda = \lambda(0,m)\lambda(m, d(\lambda)).
\]

For $v\in\Lambda^0$ and $E \subset \Lambda$, we write $v E$ for $E \cap r^{-1}(v)$ and $E v$ for $E
\cap s^{-1}(v)$.

\begin{dfn}[{\cite[Definition~5.1]{KP2000} (see also \cite{PaskQuiggEtAl:JA05})}] \label{def:spg}
Let $G$ be a discrete group, $( \Lambda , d)$ a  $k$-graph and  $c : \Lambda \rightarrow G$ a
functor. The \emph{skew product} $k$-graph $\Lambda \times_c G$ is defined as follows:
as a set $\Lambda \times_c G$ is the cartesian product $\Lambda \times G$ and
$d ( \lambda , g ) = d ( \lambda )$ (so $(\Lambda \times_c G)^0 = \Lambda^0 \times G$) with
\[
s (\lambda , g ) = (  s ( \lambda ) ,  g c ( \lambda ) ) \quad\mbox{and} \quad
r ( \lambda , g ) = ( r ( \lambda ), g).
\]
If $s ( \lambda ) = r ( \mu )$ then $( \lambda , g )$ and $( \mu , g c ( \lambda ) )$ are
composable in $\Lambda \times_c G $ and
\begin{equation} \label{eq:spgcomp}
    ( \lambda , g ) (  \mu , g c ( \lambda ) ) = ( \lambda \mu , g ) .
\end{equation}
\end{dfn}

\begin{examples} \label{ex:kgraphs}
\begin{enumerate}
\item\label{it:Tsubk} For $k \ge 0$ let $T_k = \NN^k$ regarded as a $k$-graph with $d : T_k \to
    \NN^k$ the identity map.  So $T_k$ has exactly one morphism of degree $n$ for each $n \in
    \NN^k$, and in particular a single vertex $0$. For $k \ge 1$, $T_k$ is generated by the $k$
    commuting elements, $e_1, \dots, e_k$.
\item\label{it:Bsubn}  For $n \ge 1$ let $B_n$ be the path category of the directed graph with one vertex and
    $n$ distinct edges $f_1 , \dots , f_n$. We refer to $B_n$ as the $1$-graph associated
    to the bouquet of $n$-circles (see Example~\ref{ex:fga}(\ref{it:Cayley})).
\item For $n \ge 2$ let  $\mathbb{F}_n$ be the free group on $n$ generators $\{ h_1 , \ldots , h_n \}$
and define the functor $c : B_n \to \mathbb{F}_n$  by $c( f_i ) = h_i$ for $i=1, \ldots , n$.
Let $A_n$ denote the skew product $1$-graph $B_n \times_c \mathbb{F}_n$.
The underlying directed graph associated to $A_n$ is the (right) Cayley graph of
    $\mathbb{F}_n$ and may be visualised as a uniform $n$-ary tree.
\item For $k \ge 1$ and $m \in (\NN \cup \{\infty\})^k$, we write $\Omega_{k,m}$ for the
    $k$-graph with
    \[
        \Omega_{k,m} := \{(p,q) \in \NN^k \times \NN^k : p \le q \le m\}
    \]
    and with structure maps $r(p,q) := (p,p)$, $s(p,q) := (q,q)$, $d(p,q) := q-p$ and $(p,q)(q,r)
    := (p,r)$. Define $\Omega_0
    := \{0\}$ and for $k \ge 1$ let $\Omega_{k} := \Omega_{k,(\infty, \dots, \infty)}$.
\item For $k \ge 1$, let $\Delta_k$ be the $k$-graph with $\Delta_k := \{(p, q) \in \ZZ^k \times
    \ZZ^k : p \le q\}$ and structure maps as in $\Omega_{k,m}$.
\item Let  $( \Lambda_i , d_i)$ be a $k$-graph for $i=1,2$. The disjoint union $\Lambda_1 \sqcup
    \Lambda_2$ may be regarded as a $k$-graph with $d(\lambda) = d_i(\lambda)$ if
    $\lambda \in \Lambda_i$ and  with other structure maps likewise inherited from the $\Lambda_i$.
%%    $(\Lambda_1 \sqcup \Lambda_2)^n = \Lambda^n_1 \sqcup \Lambda^n_2$ for all $n$
\item Let $( \Lambda_i , d_i )$ be a $k_i$-graph for $i=1,2$. Then $( \Lambda_1 \times
    \Lambda_2, d_1 \times d_2 )$ is a $(k_1 + k_2)$-graph where $\Lambda_1 \times \Lambda_2$ is
    the product category and $d_1 \times d_2 : \Lambda_1 \times \Lambda_2 \rightarrow \NN^{k_1
    + k_2}$ is given by $(d_1 \times d_2) (\lambda_1 , \lambda_2 ) = ( d_1 ( \lambda_1 ) , d_2
    ( \lambda_2 ) ) \in \NN^{k_1} \times \NN^{k_2}$ for $\lambda_1 \in \Lambda_1$ and
    $\lambda_2 \in \Lambda_2$.
\end{enumerate}
\end{examples}

Let $k_1 , k_2 \ge 1$.
Let $\pi_1 : \ZZ^{k_1+k_2} \to \ZZ^{k_1}$ denote the projection onto the first $k_1$ coordinates
and $\pi_2 : \ZZ^{k_1+k_2} \to \ZZ^{k_2}$ denote the projection onto the last $k_2$ coordinates. We
frequently regard $\pi_i$ as a homomorphism from $\NN^{k_1+k_2}$ to $\NN^{k_i}$.
%For $n \in \NN^{k_1}$ we denote by $(n, 0_{k_2} )$ the element
%$\sum^{k_1}_{i=1} n_i e_i \in \NN^{k_1+k_2}$ and for $m \in \NN^{k_2}$, we denote by $(0_{k_1}, m)$
%the element $\sum_{i=1}^{k_2} m_i e_{k_1+i} \in \NN^{k_1+k_2}$.

A \emph{$k$-graph morphism} between $k$-graphs is a degree-preserving functor. There is a
category whose objects are $k$-graphs and whose morphisms are $k$-graph morphisms. Whenever we
regard $k$-graphs as objects of a category in this paper, it will be this one.

\begin{examples} \label{ex:prodex}
\begin{enumerate}
\item For $k_1 , k_2 \ge 1$ we have $T_{k_1+k_2} = \NN^{k_1 + k_2} = \NN^{k_1} \times
    \NN^{k_2} =  T_{k_1} \times T_{k_2}$.
\item For $k_1 , k_2 \ge 1$ we have $\Delta_{k_1+k_2} \cong \Delta_{k_1}
\times \Delta_{k_2}$. One checks that the map
$(m,n) \mapsto ( ( \pi_1 (m) , \pi_1 (n) ) , ( \pi_2 (m) , \pi_2 (n) ) )$ gives the desired isomorphism
of $k$-graphs.
\end{enumerate}
\end{examples}

It is sometimes useful to consider morphisms between higher-rank graphs which do not preserve
degree. The following definition is from \cite[\S 2]{kps2}.

\begin{dfn}
Let $(\Lambda , d )$ be a $k$-graph and $( \Gamma , d' )$ be an $\ell$-graph. A functor $\psi :
\Lambda \to \Gamma$ is called a \emph{quasimorphism} if there is a homomorphism $\pi : \NN^k \to
\NN^\ell$ such that for all $\lambda \in \Lambda$ we have $\pi ( d ( \lambda ) ) = d' ( \psi (
\lambda ) )$.
\end{dfn}

\begin{example} \label{ex:natural_qm}
For $i = 1,2$, let $( \Lambda_i , d_i )$ be a $k_i$-graph. Let $\Lambda_1 \times \Lambda_2$ the
associated cartesian product $(k_1+k_2)$-graph. Since every element $\lambda  \in \Lambda_1 \times
\Lambda_2$ is of the form $\lambda = ( \lambda_1 , \lambda_2 )$ where $\lambda_1 \in \Lambda_1$ and
$\lambda_2 \in \Lambda_2$, for $i=1,2$ there is a natural functor $\psi_i : \Lambda_1 \times
\Lambda_2 \to \Lambda_i$ given by $( \lambda_1 , \lambda_2 ) \mapsto \lambda_i$; % For $i=1,2$ the
%map $\psi_i : \Lambda_1 \times \Lambda_2 \to \Lambda_i$
note that $\psi_i $ is a quasimorphism with $d_i \circ \psi_i = \pi_i \circ (d_1 \times d_2)$.
\end{example}

\begin{dfn} \label{def:pbg}
Let $f : \NN^k \to \NN^l$ be a homomorphism and let $\Gamma$ be an $l$-graph. The pullback
$f^*\Gamma$ is the $k$-graph $\{(\gamma,n) \in \Gamma \times \NN^k : f(n) = d(\gamma)\}$ with degree map %$d_{f^*\Gamma}(\gamma,n)
$d(\gamma,n)= n$ (see \cite[Definition~1.9]{KP2000}). The structure maps
are given by $r ( \gamma , n ) = ( r ( \gamma ) , 0 )$ and $s ( \gamma , n ) = ( s ( \gamma ) , 0
)$. If $s ( \lambda ) = r ( \mu )$ in $\Gamma$ then $( \lambda , n )$ and $( \mu , m)$ are
composable in $f^* \Gamma$, and
\begin{equation} \label{eq:pbgcomp}
( \lambda , n ) ( \mu , m ) = ( \lambda \mu , m+n ) .
\end{equation}
\end{dfn}

All of the above is standard notation for $k$-graphs. In the remainder of this section we introduce
some new notation related to $k$-graphs as a preliminary to the definition and basic properties of
homology for $k$-graphs in Section~\ref{sec:homology}

\begin{dfn} \label{def:undirected}
Let $\Lambda$ be a $k$-graph where $k \ge 1$. For $\lambda \in \Lambda$ and $m \in \{1, -1\}$, we
define
\[
    s(\lambda, m) := \begin{cases}
            s(\lambda) &\text{ if $m = 1$} \\
            r(\lambda) &\text{ if $m = -1$}
    \end{cases}
    \qquad\text{ and }\qquad
    r(\lambda, m) := s(\lambda, -m).
\]
An \textit{undirected path} is a pair $(g,m)$ where $g = (g_1 , \ldots , g_n)$ is a sequence of
edges in $\Lambda$ and $m=( m_1 , \ldots , m_n)$ is a sequence of orientations, $m_i  \in \{1,
-1\}$ such that $s(g_i, m_i) = r(g_{i+1}, m_{i+1})$ for all $i$. If $(g,m)$ is an undirected path,
we define $s(g,m) := s(g_n, m_n)$ and $r(g, m) := r(g_1, m_1)$. If $r(g,m) = s(g,m)$, then we say
that the undirected path $(g,m)$ is \emph{closed}.

A closed undirected path $(g,m)$ is called \emph{simple} if $s(g_i, m_i) \not= s(g_j, m_j)$ for $i
\not= j$.
\end{dfn}

\begin{dfn}{\bf (cf.\ \cite[\S 3]{PaskQuiggEtAl:NYJM04})} \label{dfn:conn}
A $k$-graph $\Lambda$ is \textit{connected} if the equivalence relation on $\Lambda^0$ generated
by $\{(r(\lambda), s(\lambda)) : \lambda \in \Lambda\}$ is $\Lambda^0 \times \Lambda^0$.
\end{dfn}

\begin{rmk}\label{rmk:component decomposition}
A $k$-graph $\Lambda$ is connected if and only if for all $u, v \in \Lambda^0$ there is an
undirected path with source $u$ and range $v$.

For each equivalence class $X \subseteq \Lambda^0$  from
Definition~\ref{dfn:conn}, the $k$-graph $X \Lambda X$ is a connected component of $\Lambda$.
Each $k$-graph is the disjoint union of its connected components.
\end{rmk}

For $k \ge 0$ define $\mathbf{1}_k := \sum^k_{i=1} e_i \in \NN^k$. By convention $\mathbf{1}_0 = 0
\in \NN^0$.

\begin{dfn}
Let $\Lambda$ be a $k$-graph. For $r \ge 0$ let
\[
Q_r ( \Lambda ) = \{ \lambda \in \Lambda : d ( \lambda ) \le \mathbf{1}_k , |\lambda| = r \}.
\]
Let $Q ( \Lambda ) = \cup_{r\ge 0} Q_r ( \Lambda )$.
\end{dfn}

We have $Q_0(\Lambda) = \Lambda^0$, and $Q_r(\Lambda) = \emptyset$ if $r > k$. Let  $0 < r \le k$. The set
$Q_r(\Lambda)$ consists of the morphisms in $\Lambda$ which may be expressed as the composition of a sequence of $r$ edges
with distinct degrees. We regard elements of $Q_r(\Lambda)$ as unit $r$-cubes in the sense that each one gives rise to a
commuting diagram of edges in $\Lambda$ shaped like an $r$-cube. In particular, when $r \ge 1$, each element of
$Q_r(\Lambda)$ has $2r$ faces in $Q_{r-1}(\Lambda)$ defined as follows.

\begin{dfn}
Fix $\lambda \in Q_r(\Lambda)$ and write $d( \lambda ) = e_{i_1} + \cdots + e_{i_r}$ where $i_1 < \cdots < i_r$. For $1
\le j \le r$, define $F_j^0(\lambda)$ and $F_j^1(\lambda)$ to be the unique elements of $Q_{r-1} ( \Lambda )$ such that
there exist $\alpha,\beta \in \Lambda^{e_{i_j}}$ satisfying
\[
F^0_j(\lambda)\beta = \lambda = \alpha F^1_j(\lambda).
\]
\end{dfn}

\begin{rmk}\label{rmk:2faces}
Equivalently, $F_j^0 ( \lambda ) = \lambda(0 , d(\lambda) - e_{i_j})$ and $F_j^1(\lambda) = \lambda (e_{i_j} ,
d(\lambda))$. If $1 \le i < j \le r$, then $F^\ell_i \circ F^m_j = F^m_{j-1}\circ F^\ell_i$ for $\ell , m \in \{0, 1\}$.
\end{rmk}

\begin{ntn}\label{ntn:F.A.G.}
Let $X$ be a set. We write $\ZZ X$ for the free abelian group generated by $X$ (so $\ZZ \emptyset =
\{0\}$).
\end{ntn}

\begin{rmk}\label{rmk:F.A.G. maps}
Let $X$ and $Y$ be sets. Then every function $f : X \to Y$ extends uniquely to a homomorphism $f :
\ZZ X \to \ZZ Y$. In particular, the inclusion maps induce an isomorphism  $\ZZ(X \sqcup Y) \cong
\ZZ X \oplus \ZZ Y$. Moreover there is an isomorphism $\ZZ(X \times Y) \cong \ZZ X \otimes \ZZ Y$
determined by $(x, y) \mapsto x \otimes y$.
\end{rmk}

\section{The homology of a \texorpdfstring{$k$}{k}-graph}\label{sec:homology}

In this section we define the homology of a $k$-graph, compute some basic examples and provide descriptions
of the first two homology groups. Throughout this paper, we use $r$ (for rank) for the indexing subscript in
complexes and in homology groups because $n$ is more commonly used for a generic element of $\NN^k$.

\begin{dfns}
For $r \in \NN$ let $C_r(\Lambda) = \ZZ Q_r(\Lambda)$. For $r \ge 1$, define $\partial_r : C_r(\Lambda) \to C_{r-1} (
\Lambda )$ to be the unique homomorphism such that
\begin{equation} \label{eq:bondarydef}
\partial_r( \lambda ) = \sum_{\ell=0}^1 \sum_{i=1}^r (-1)^{i+\ell} F_i^\ell ( \lambda ) \quad\text{ for all $\lambda \in Q_r(\Lambda)$.}
\end{equation}
We write $\partial_0$ for the zero homomorphism $C_0 ( \Lambda ) \to \{0\}$.
\end{dfns}

\begin{rmks}
For $f \in Q_1 ( \Lambda )$ we have $F_1^1 (f) = s(f)$ and $F_1^0 (f) = r(f)$ and so
$\partial_1 (f) = s(f)-r(f)$.

Fix $\lambda \in Q_2 ( \Lambda )$. Write $d(\lambda) = e_{j_1} + e_{j_2}$ with $j_1 < j_2$.
Factorise $\lambda = f_1 g_1 = g_2 f_2$ where $d(f_i) = e_{j_1}$ and $d(g_i) = e_{j_2}$ for
$i=1,2$. Then $F_2^0 ( \lambda ) = \lambda ( 0 , e_{j_1} )=f_1$, $F_2^1 ( \lambda ) = \lambda (
e_{j_2} , e_{j_1}+e_{j_2})=f_2$, $F_1^0 ( \lambda ) = \lambda ( 0 , e_{j_2} )=g_2$ and $F_1^1 (
\lambda ) = \lambda ( e_{j_1} , e_{j_1} + e_{j_2} )=g_1$. Hence
\begin{equation} \label{eq:partial2}
\partial_2 ( \lambda ) = g_1 + f_1 - f_2 - g_2.
\end{equation}
\end{rmks}

For $r \ge 0$, $\partial_r$ is a homomorphism and $\partial_r \circ
\partial_{r+1} = 0$ by Remark~\ref{rmk:2faces}. Hence we have the following.
\begin{lem}
Let $\Lambda$ be a $k$-graph, then $( C_* ( \Lambda ) , \partial_* )$ is a chain complex.
\end{lem}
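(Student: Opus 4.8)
The plan is to reduce the lemma to the single identity $\partial_r \circ \partial_{r+1} = 0$ for all $r \ge 0$. The groups $C_r(\Lambda) = \ZZ Q_r(\Lambda)$ are abelian by construction, and each $\partial_r$ is a homomorphism by definition, so these requirements of a chain complex hold automatically. Since $C_{r+1}(\Lambda)$ is free on $Q_{r+1}(\Lambda)$ (Notation~\ref{ntn:F.A.G.} and Remark~\ref{rmk:F.A.G. maps}), it then suffices to verify that $\partial_r(\partial_{r+1}(\lambda)) = 0$ for each generator $\lambda \in Q_{r+1}(\Lambda)$; the case $r = 0$ is immediate because $\partial_0 = 0$.

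First I would expand the composite by applying the defining formula~\eqref{eq:bondarydef} twice, obtaining
\[
\partial_r\bigl(\partial_{r+1}(\lambda)\bigr) = \sum_{\ell,m \in \{0,1\}} \sum_{j=1}^{r+1}\sum_{i=1}^{r} (-1)^{i+j+\ell+m}\, F_i^\ell\bigl(F_j^m(\lambda)\bigr).
\]
Next I would split the double sum over $i$ and $j$ according to whether $i < j$ or $i \ge j$. On the range $i < j$ I would apply the commutation relation $F_i^\ell \circ F_j^m = F_{j-1}^m \circ F_i^\ell$ from Remark~\ref{rmk:2faces}, and then reindex by setting $a = j-1$ and $b = i$, so that $i < j$ becomes $1 \le b \le a \le r$, matching exactly the index range of the $i \ge j$ part. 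The substitution $j = a+1$ contributes one extra factor of $-1$, turning the $i<j$ contribution into
\[
-\sum_{\ell,m \in \{0,1\}}\ \sum_{1 \le b \le a \le r} (-1)^{a+b+\ell+m}\, F_a^m\bigl(F_b^\ell(\lambda)\bigr).
\]

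The final step is to match this against the $i \ge j$ contribution. Because the outer sum runs over all pairs $(\ell,m) \in \{0,1\}^2$, I would simply relabel the orientation variables $\ell \leftrightarrow m$ in the reindexed sum above; this swap turns $F_a^m(F_b^\ell(\lambda))$ into $F_a^\ell(F_b^m(\lambda))$ without changing the sign, and after renaming $a \mapsto i$, $b \mapsto j$ the displayed expression becomes precisely the \emph{negative} of the $i \ge j$ contribution. Hence the two parts cancel and $\partial_r(\partial_{r+1}(\lambda)) = 0$, as required. I expect the only delicate point to be the sign bookkeeping: one must check that the index shift $j \mapsto j-1$ produces exactly one sign and that the orientation relabelling introduces none, so that the cancellation is exact rather than merely up to sign. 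Everything else is routine reindexing of finite sums over the faces supplied by Remark~\ref{rmk:2faces}.
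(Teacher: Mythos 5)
Your proposal is correct and matches the paper's approach: the paper simply asserts that $\partial_r \circ \partial_{r+1} = 0$ follows from the face-commutation relation of Remark~\ref{rmk:2faces}, and your argument is exactly the standard sign computation that justifies this assertion (the reindexing $a = j-1$, $b = i$ and the symmetric swap of the orientation labels are both carried out correctly, so the cancellation is exact). The only difference is that the paper leaves these details to the reader.
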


We define the homology of $\Lambda$ to be the homology of the chain complex $C_*(\Lambda)$.

\begin{dfn}
For $r \in \NN$ define $H_r ( \Lambda ) = \ker ( \partial_r ) / \image\partial_{r+1}$. We call
$H_r(\Lambda)$ the \emph{$r$\textsuperscript{th} homology group of $\Lambda$} and we call
$H_*(\Lambda)$ the \emph{homology of $\Lambda$}.
\end{dfn}

\begin{lem}
Fix $n \in \NN$. If $\psi : \Lambda_1 \to \Lambda_2$ is a $k$-graph morphism, then there is a homomorphism $\psi_* :
H_r(\Lambda_1) \to H_r(\Lambda_2)$ determined by $\psi_*([\lambda]) = [\psi(\lambda)]$ for all $\lambda \in
Q_r(\Lambda)$. Moreover, the assignments $\Lambda \mapsto H_r (\Lambda)$ and $\psi \mapsto \psi_*$ comprise a
covariant functor from the category of $k$-graphs with $k$-graph morphisms to the category of abelian groups with
homomorphisms.
\end{lem}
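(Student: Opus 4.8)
The plan is to work at the level of chain complexes and then pass to homology via the standard mechanism. A $k$-graph morphism $\psi : \Lambda_1 \to \Lambda_2$ is a degree-preserving functor, so for each $\lambda \in Q_r(\Lambda_1)$ we have $d(\psi(\lambda)) = d(\lambda) \le \mathbf{1}_k$ and $|\psi(\lambda)| = |\lambda| = r$, whence $\psi$ restricts to a function $Q_r(\Lambda_1) \to Q_r(\Lambda_2)$. By Remark~\ref{rmk:F.A.G. maps} this extends uniquely to a group homomorphism $\psi_\# : C_r(\Lambda_1) \to C_r(\Lambda_2)$ for each $r$. The goal is to show that $(\psi_\#)_{r \ge 0}$ is a chain map, so that it descends to the homology groups, and that this assignment is functorial.

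The key step is to verify that $\psi$ intertwines the face maps, i.e.\ $\psi(F_j^\ell(\lambda)) = F_j^\ell(\psi(\lambda))$ for all $\lambda \in Q_r(\Lambda_1)$ and all admissible $j, \ell$. By Remark~\ref{rmk:2faces} we have $F_j^0(\lambda) = \lambda(0, d(\lambda) - e_{i_j})$ and $F_j^1(\lambda) = \lambda(e_{i_j}, d(\lambda))$, where these are the factors obtained from the factorisation property. Since $\psi$ is a degree-preserving functor, it carries the factorisation of $\lambda$ witnessing $F_j^\ell(\lambda)$ to the corresponding factorisation of $\psi(\lambda)$; uniqueness in the factorisation property then forces $\psi(F_j^\ell(\lambda)) = F_j^\ell(\psi(\lambda))$. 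Applying $\psi_\#$ to the defining formula~\eqref{eq:bondarydef} and using this identity termwise gives $\psi_\# \circ \partial_r = \partial_r \circ \psi_\#$. I expect this to be the main (though routine) obstacle, as it is the only place where the structure of the boundary map genuinely enters; everything else is formal.

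Once $\psi_\#$ is known to be a chain map, it carries $\ker\partial_r$ into $\ker\partial_r$ and $\image\partial_{r+1}$ into $\image\partial_{r+1}$, so the formula $\psi_*([\lambda]) = [\psi(\lambda)]$ gives a well-defined homomorphism $\psi_* : H_r(\Lambda_1) \to H_r(\Lambda_2)$ for each $r$. Finally, to establish functoriality I would check the two axioms at the chain level on the generating sets $Q_r$, where they are immediate: the identity morphism induces the identity function on each $Q_r$, hence $(\id_\Lambda)_\# = \id$ and so $(\id_\Lambda)_* = \id$; and for composable $k$-graph morphisms $\phi, \psi$ we have $(\phi\circ\psi)(\lambda) = \phi(\psi(\lambda))$ on generators, giving $(\phi\circ\psi)_\# = \phi_\# \circ \psi_\#$ and therefore $(\phi\circ\psi)_* = \phi_* \circ \psi_*$ on passing to homology. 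This yields the claimed covariant functor from $k$-graphs to abelian groups.
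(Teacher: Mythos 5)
Your proposal is correct and follows essentially the same route as the paper: observe that $\psi$ maps $Q_r(\Lambda_1)$ into $Q_r(\Lambda_2)$ because it is degree-preserving, use preservation of factorisations (with uniqueness) to show it intertwines the face maps and hence the boundary maps, and then note that functoriality is immediate on generators. The only difference is that you spell out the face-map computation and the identity axiom explicitly, which the paper leaves as "follows immediately from the definition."
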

\begin{proof}
For $\lambda \in Q_r( \Lambda_1 )$ we have $\psi(\lambda) \in Q_r( \Lambda_2 )$ as $\psi$ is degree
preserving. Since it preserves factorisations, $\psi$ intertwines the face maps on $Q_r(\Lambda_1)$ and
$Q_r(\Lambda_2)$, so it intertwines the boundary maps $\partial_r$ and therefore defines a homomorphism
$\psi_* : H_r( \Lambda_1 ) \to H_r( \Lambda_2 )$.

For the second assertion of the Lemma, we just have to check that $\psi \mapsto \psi_*$ preserves
composition. This follows immediately from the definition.
\end{proof}

\begin{rmk} \label{rmk:trivial}
For a $k$-graph $\Lambda$ and $r>k$, we have $Q_{r} ( \Lambda ) = \emptyset$, so $C_r(\Lambda)$
and $H_r(\Lambda)$ are trivial.
\end{rmk}

\begin{rmk} \label{rmk:disjoint}
Let $\Lambda_i $ be $k$-graphs for $i=1,2$.  Then the chain complex $C_*(\Lambda_1 \sqcup
\Lambda_2)$ decomposes as the direct sum of the complexes $C_*(\Lambda_1)$ and $C_*(\Lambda_2)$.
Thus the canonical inclusions of $\Lambda_1, \Lambda_2$ into $\Lambda_1 \sqcup \Lambda_2$ induce an
isomorphism $H_* ( \Lambda_1 ) \oplus H_* ( \Lambda_2 ) \cong H_* (\Lambda_1 \sqcup \Lambda_2)$.
Indeed, this isomorphism holds for countable disjoint unions of $k$-graphs.
\end{rmk}

\begin{rmk} \label{rmk:opposite}
Let $\Lambda$ be a $k$-graph and let $\Lambda^{\op}$ be the opposite category, which is a $k$-graph
under the same degree map. We write $\lambda^{\op}$ for an element $\lambda \in \Lambda$ when
regarded as an element of $\Lambda^{\op}$. For each $r$, the assignment $\lambda \mapsto (-1)^r
\lambda^{\op}$ induces an isomorphism $\phi_r : C_r(\Lambda) \to C_r(\Lambda^{\op})$. Using that
$F_i^l(\lambda^{\op}) = F_i^{1-l}(\lambda)^{\op}$ for all $\lambda \in Q_r(\Lambda)$, a calculation
shows that $\partial_{r+1} \circ \phi_{r+1} = \phi_r \circ \partial_{r+1}$ for all $r$. So $\phi_*$ is
an isomorphism of complexes and hence induces an isomorphism $H_*(\Lambda) \cong
H_*(\Lambda^{\op})$.
\end{rmk}

\begin{examples} \label{ex:donebydef}
\begin{enumerate}
\item Let $T_0$ be the $0$-graph of  Examples \ref{ex:kgraphs} (\ref{it:Tsubk}). Then $Q_0 ( T_0
    ) = \{ 0 \}$ and $Q_r ( T_0 ) = \emptyset$ for all $r \ge 1$. Hence $C_0 ( T_0 ) = \ZZ \{ 0
    \}$ and $C_r ( T_0 ) = \{0 \}$ for all $r \ge 1$. Since $\partial_r = 0$ for all $r \ge 0$, we
    have $H_0 (T_0 ) = \ZZ\{ 0 \} \cong \ZZ$ and $H_r(T_0) = \{ 0 \}$ for $r \ge 1$.
\item  More generally, for $k \ge 1$, we have $Q_0(T_k) = \{ 0 \}$, $Q_r(T_k) =  \emptyset$ for
    all $r > k$ and
\[
Q_r(T_k) = \{ e_{i_1} + \cdots +  e_{i_r} \mid 1 \le i_1 < \cdots < i_r \le k \}
\]
for $1 \le r \le k$.   Thus $|Q_r(T_k)| = \binom{k}{r}$ for $0 \le r \le k$.

For $1 \le j \le r \le k$, we have $F_j^0 = F_j^1$, so  $\partial_r = 0$. Hence
\[
H_r(T_k) = \ZZ Q_r(T_k) \cong \ZZ^{\binom{k}{r}}
\qquad\textrm{ for $0 \le r \le k$,}
\]
and $H_r(T_k) = \{ 0 \}$ for $r > k$.  In particular $T_k$ has the same homology as the $k$-torus
$\TT^k$.
\end{enumerate}
\end{examples}

\begin{dfn}
Let $\Lambda$ be a $k$-graph and let $(g,m)$ be an undirected path in $\Lambda$ (see Definition
\ref{def:undirected}). Then
\[
h = \sum_{i=1}^{n} m_{i}  g_{i}  \in C_1 ( \Lambda )
\]
is called the \emph{trail} associated to $(g,m)$. If $(g,m)$ is closed, then $h$ is said to be a
\emph{closed trail}. If in addition $(g,m)$ is simple, then $h$ is called a \emph{simple closed trail}.
\end{dfn}

\begin{rmk} \label{rmk:boundaryofudp}
Let $(g,m)$ be an undirected path  in $\Lambda$  with source $u$ and range $v$. A straightforward
computation shows that $\partial_1 (h) = u - v$ where $h$ is the trail associated to $(g,m)$. Hence, if $h$
is a closed trail then $\partial_1 (h) = 0$. If $h$ is a closed trail and $a \in \ZZ$ is nonzero then $ah$ is
also a closed trail.
\end{rmk}

\begin{prop} \label{prop:H0ofconn}
Let $\Lambda$ be a connected $k$-graph, then $H_{0} ( \Lambda ) \cong \ZZ$.
\end{prop}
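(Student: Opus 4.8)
The plan is to identify $H_0(\Lambda)$ explicitly and then construct an isomorphism onto $\ZZ$ by means of an augmentation map. Since $\partial_0 = 0$, we have $\ker\partial_0 = C_0(\Lambda)$, so $H_0(\Lambda) = C_0(\Lambda)/\image\partial_1$, where $C_0(\Lambda) = \ZZ\Lambda^0$ is free abelian on the vertex set and $\image\partial_1$ is generated by the elements $\partial_1(f) = s(f) - r(f)$ for $f \in Q_1(\Lambda)$ (using the first displayed formula of the Remarks following the definition of $\partial_r$).

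First I would introduce the augmentation homomorphism $\epsilon : C_0(\Lambda) \to \ZZ$ determined by $\epsilon(v) = 1$ for every $v \in \Lambda^0$. Since $\epsilon(s(f) - r(f)) = 1 - 1 = 0$ for each edge $f$, the map $\epsilon$ annihilates $\image\partial_1$ and hence descends to a homomorphism $\bar\epsilon : H_0(\Lambda) \to \ZZ$. Because $\Lambda$ is nonempty, choosing any vertex $v_0$ gives $\epsilon(v_0) = 1$, so $\bar\epsilon$ is surjective. It then remains to show that $\bar\epsilon$ is injective, which is equivalent to the inclusion $\ker\epsilon \subseteq \image\partial_1$ (the reverse containment being automatic).

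This last inclusion is the only place where connectedness is used, and it is the crux of the argument. Fix a base vertex $v_0$. For an arbitrary vertex $v$, connectedness supplies, via Remark~\ref{rmk:component decomposition}, an undirected path with source $v$ and range $v_0$; by Remark~\ref{rmk:boundaryofudp} its associated trail $h$ satisfies $\partial_1(h) = v - v_0$, so $v - v_0 \in \image\partial_1$. Now take any $x = \sum_v a_v v \in \ker\epsilon$, a finite sum with $\sum_v a_v = 0$. Rewriting $x = \sum_v a_v(v - v_0) + \bigl(\sum_v a_v\bigr) v_0 = \sum_v a_v(v - v_0)$ exhibits $x$ as a $\ZZ$-combination of elements of $\image\partial_1$, whence $x \in \image\partial_1$. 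Thus $\ker\epsilon = \image\partial_1$, so $\bar\epsilon$ is an isomorphism and $H_0(\Lambda) \cong \ZZ$.

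I do not expect a serious obstacle: the whole argument is the standard augmentation computation of $H_0$, and the only real work is marshalling the previously established trail machinery (Remarks~\ref{rmk:component decomposition} and~\ref{rmk:boundaryofudp}) to convert connectedness into the statement that each difference $v - v_0$ of vertices lies in $\image\partial_1$. The mild edge case $k = 0$, where a connected $\Lambda$ is a single vertex, is handled automatically since there are then no vertices other than $v_0$.
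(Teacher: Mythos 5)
Your proposal is correct, and the overall strategy coincides with the paper's: both define the augmentation homomorphism sending every vertex to $1$, note that it kills $\image\partial_1$, and reduce the problem to the inclusion $\ker\epsilon \subseteq \image\partial_1$, which is where connectedness enters via undirected paths and their trails (Remarks~\ref{rmk:component decomposition} and~\ref{rmk:boundaryofudp}). The one place you diverge is in how that inclusion is finished. The paper runs an induction on $N(a) = \sum_i |m_i|$ (really on the number of distinct vertices appearing), at each stage locating two coefficients of opposite sign and subtracting off a multiple of a connecting trail. You instead fix a base vertex $v_0$, observe that $v - v_0 \in \image\partial_1$ for every $v$, and write any $x = \sum_v a_v v$ with $\sum_v a_v = 0$ as the telescoped sum $\sum_v a_v (v - v_0)$. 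Your version is shorter and avoids the induction entirely; it buys a cleaner, more standard argument at no cost in generality. The paper's inductive bookkeeping proves the same thing but is more laborious. Both are complete; yours also explicitly records surjectivity of the induced map on $H_0$, which the paper leaves implicit.
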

\begin{proof}
Define a homomorphism $\theta : C_{0} ( \Lambda ) \to \ZZ$ by $\theta ( v ) =1$ for all $v \in
\Lambda^{0}$. It suffices to show that $\ker (\theta ) \subset \image ( \partial_{1} )$, as the
reverse inclusion is clear.

Fix distinct $u, v \in \Lambda^{0}$. Since $\Lambda$ is connected there is an undirected path
$(g,m)$ from $u$ to $v$. By Remark \ref{rmk:boundaryofudp} $\partial_{1} (h)= u - v$ where $h$ is
the trail associated to $(g,m)$. In particular, $u - v \in \image(\partial_1)$.

Let $a = \sum_{i=1}^{n} m_{i}  v_{i}  \in \ker ( \theta )$ with distinct $v_{i}$ and $m_{i} \neq 0$
for all $i$. We prove by induction on $n \ge 2$ that $\sum_{i=1}^{n} m_{i} v_{i}  \in \image(
\partial_{1} )$. When $n=2$ we must have $m_1 + m_2 = 0$. The preceding paragraph yields a trail $h$
such that $\partial_1(h) = v_1 - v_2$, and then $a = \partial_1(m_1 h) \in \image(\partial_{1})$.

Fix $n \ge 3$ and suppose  the result holds  for all $\ell$ with $n > \ell \ge 2$. Relabeling if
necessary, we may assume that $m_{1}$ and $m_{2}$ have opposite sign, and $| m_{1} | \le |  m_{2}
|$. We give a proof for the case $m_{1} > 0$,  the case $m_{1} <0$ being similar. Since $\Lambda$
is connected there is an undirected path $(g_1,m_1)$ from $v_1$ to $v_2$. Let $h_{1} \in C_{1} (
\Lambda )$ be the associated trail.  Then
 $\partial_{1} (h_{1}) =  v_{1} -  v_{2}$ and
\[
a_{1} = a - \partial_{1}(  m_{1} h_{1} ) =  (m_{2} + m_{1} )  v_{2} +
\sum_{i=3}^{n} m_{i}  v_{i}.
\]
By the inductive hypothesis $a_{1} \in \image ( \partial_{1} )$ and so $a = a_1 + \partial ( m_1
g_1 ) \in \image ( \partial_{1} )$.
\end{proof}

Combining Proposition~\ref{prop:H0ofconn}, Remark~\ref{rmk:disjoint} and Remark~\ref{rmk:component
decomposition} gives the following.

\begin{cor} \label{cor:conncomp}
Let $\Lambda$ be a $k$-graph with $p$ connected components (where $p \in \{1, 2, \dots\} \cup
\{\infty\}$). Then $H_{0} ( \Lambda ) \cong \ZZ^{p}$. In particular $\Lambda$ is connected if and
only if $H_0 ( \Lambda ) \cong \ZZ$.
\end{cor}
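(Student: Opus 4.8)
The plan is to assemble the three cited results directly, with no new computation required. First I would invoke Remark~\ref{rmk:component decomposition}, which says that $\Lambda$ is the disjoint union $\Lambda = \bigsqcup_j \Lambda_j$ of its connected components, these being the $k$-graphs $X\Lambda X$ indexed by the equivalence classes $X$ of Definition~\ref{dfn:conn}. By hypothesis there are exactly $p$ such components, where $p \in \{1,2,\dots\}\cup\{\infty\}$; since $\Lambda$ is countable, $p$ is at most countable, which is precisely the setting covered by the remarks we rely on.

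Next I would apply Remark~\ref{rmk:disjoint} in its countable form to get $H_*(\Lambda) = H_*\bigl(\bigsqcup_j \Lambda_j\bigr) \cong \bigoplus_j H_*(\Lambda_j)$, and in particular $H_0(\Lambda) \cong \bigoplus_j H_0(\Lambda_j)$. Since each $\Lambda_j$ is connected, Proposition~\ref{prop:H0ofconn} gives $H_0(\Lambda_j) \cong \ZZ$ for every $j$. Combining these two facts yields $H_0(\Lambda) \cong \bigoplus_j \ZZ \cong \ZZ^p$, where in the case $p = \infty$ we read $\ZZ^p$ as the countable direct sum $\bigoplus_{j\ge 1}\ZZ$. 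This proves the main statement.

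For the final assertion I would use that the groups $\ZZ^p$ are pairwise non-isomorphic for distinct $p$, being distinguished by their rank (finite and equal to $p$ when $p$ is finite, and infinite when $p=\infty$). Hence $H_0(\Lambda)\cong\ZZ=\ZZ^1$ forces $p=1$, i.e.\ $\Lambda$ is connected, while the converse is immediate from the first part (or directly from Proposition~\ref{prop:H0ofconn}). I do not anticipate any real obstacle here; the only points meriting a moment's care are invoking the disjoint-union isomorphism in its countable form and handling the rank argument uniformly across finite $p$ and $p=\infty$.
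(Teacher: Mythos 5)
Your proposal is correct and is exactly the argument the paper intends: the paper's proof is the single sentence ``Combining Proposition~\ref{prop:H0ofconn}, Remark~\ref{rmk:disjoint} and Remark~\ref{rmk:component decomposition} gives the following,'' and you have simply spelled out that combination, including the countable-disjoint-union form of Remark~\ref{rmk:disjoint} and the rank argument for the final equivalence.
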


\begin{example} \label{ex:delta1}
Since $\Delta_1$ is connected we have $H_0 ( \Delta_1 ) \cong \ZZ$ by
Proposition~\ref{prop:H0ofconn}. We claim that  $H_r( \Delta_1 ) = 0$ for all $r \ge 1$. By
Remark~\ref{rmk:trivial} it suffices to check that $H_1(\Delta_1) = \{0\}$. To see this fix $f \in
C_1(\Delta_1) \setminus \{0\}$. Then we may express $f = \sum_{i=\ell}^m a_i (i, i+1)$, where $a_i
\in \ZZ$ and $a_m \ne 0$. Then
\begin{align*}
\partial_1 ( f ) &= \sum_{i=\ell}^m a_i \left((i+1, i+1) - (i, i) \right) \\
&=  a_m(m+1, m+1) - a_\ell(\ell, \ell) + \sum_{i=\ell + 1}^m (a_{i-1} - a_i)(i, i).
\end{align*}
Since $a_m \neq 0$ it follows that $\partial_1 (f) \neq 0$. So $\partial_1$ is injective and hence
$H_1(\Delta_1) = \ker(\partial_1)$ is trivial.
\end{example}

\begin{prop} \label{prop:trailsforker}
Let $\Lambda$ be a $k$-graph. For each $a \in \ker \partial_1$, there
exist simple closed trails $h_1 , \ldots , h_n$ in $C_1 ( \Lambda )$ such that
$a = \sum_{i=1}^n m_i h_i $.
\end{prop}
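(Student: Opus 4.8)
The plan is to reformulate the statement as a Kirchhoff-type balance condition and then peel off one simple closed trail at a time, inducting on the total weight $N(a) := \sum_{f \in Q_1(\Lambda)} |c_f|$, where $a = \sum_f c_f f$ with $c_f \in \ZZ$. Since every closed trail lies in $\ker\partial_1$ by Remark~\ref{rmk:boundaryofudp}, subtracting an integer multiple of a simple closed trail from $a$ keeps us inside $\ker\partial_1$; so it suffices to produce, whenever $a \neq 0$, a single simple closed trail $h$ with $N(a-h) < N(a)$. The base case $a = 0$ is the empty sum.

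For the key step, I would read off the condition $\partial_1 a = 0$ coefficient-by-coefficient: for each $v \in \Lambda^0$ one gets $\sum_{f : s(f)=v} c_f = \sum_{f : r(f)=v} c_f$. I would encode this in a finite directed graph $G$ whose vertices are those of $\Lambda^0$ meeting the (finite) support $\{f : c_f \neq 0\}$ and whose arrows are obtained by orienting each support edge $f$ along $r(f) \to s(f)$ when $c_f > 0$ and along $s(f) \to r(f)$ when $c_f < 0$ (with multiplicity $|c_f|$ if one prefers a multigraph). A direct check shows the displayed balance condition is exactly ``in-degree equals out-degree at every vertex of $G$.'' In particular every vertex of $G$ has out-degree at least $1$, and $G$ is nonempty because $a \neq 0$.

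In a finite directed graph with minimum out-degree at least $1$ one finds a simple directed cycle by following out-edges and invoking the pigeonhole principle at the first repeated vertex. Translating such a cycle back into $\Lambda$ via the orientation convention of Definition~\ref{def:undirected} (orientation $+1$ for an edge traversed along $r \to s$, orientation $-1$ otherwise) produces a closed undirected path $(g,m)$ whose trail is $h = \sum_t m_t g_t$; the distinctness of the cycle's vertices is precisely the condition $s(g_i, m_i) \neq s(g_j, m_j)$, so $h$ is a \emph{simple} closed trail. By construction each edge $g_t$ appearing in $h$ lies in the support and is traversed in the direction of its flow, so $m_t = \sign(c_{g_t})$; moreover parallel copies of a single support edge all point the same way, so each edge of $\Lambda$ occurs at most once in $h$. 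Hence in $a - h$ the coefficient of each $g_t$ changes from $c_{g_t}$ to $c_{g_t} - \sign(c_{g_t})$, dropping $|c_{g_t}|$ by exactly $1$, so $N(a-h) = N(a) - (\text{length of the cycle}) < N(a)$. Applying the inductive hypothesis to $a - h$ and adding back $h$ completes the argument.

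The main obstacle, and the point requiring the most care, is the sign bookkeeping that ties the orientation $m_t$ of each edge in the extracted cycle to the sign of its coefficient $c_{g_t}$: this is what guarantees simultaneously that the extracted object genuinely satisfies the simplicity condition of Definition~\ref{def:undirected} and that subtracting it \emph{decreases} rather than increases $N$. Once the correspondence between flow orientations and trail orientations is pinned down, the remainder is the standard decomposition of a balanced (Eulerian) directed graph into cycles.
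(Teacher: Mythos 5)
Your proposal is correct and is essentially the paper's own argument: both induct on $N(a)=\sum|a_i|$ and peel off one simple closed trail by walking through the support of $a$, using the vanishing of $\partial_1 a$ at each vertex to find the next edge with the appropriate sign, and stopping at the first repeated vertex. The only cosmetic difference is that you package the walk as a cycle in an explicit balanced directed multigraph, whereas the paper carries out the same sign-matched edge-following directly in $\Lambda$.
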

\begin{proof}
For $a = \sum_{i=1}^n a_i f_i\in \ker \partial_1$ where the $f_i$ are distinct elements of
$Q_1(\Lambda)$, set $N(a) := \sum^n_{i=1} |a_i|$.  We proceed by induction on $N(a)$. If $N(a) =
0$, the result is trivial. Fix $N > 0$ and suppose as an inductive hypothesis that whenever $N(a) <
N$, there are simple closed trails $h_i$ and integers $m_i$ such that $a = \sum_{i=1}^n m_i h_i$.
Fix $a$ with $N(a) = N$. It suffices to show that there is a simple closed trail $h \in
C_1(\Lambda)$ such that $N(a - h) < N(a)$.

Recall from Definition~\ref{def:undirected} that if $p \in \{1, -1\}$ and $f \in Q_1(\Lambda)$,
then $s(f,p)$ means $s(f)$ if $p = 1$ and $r(f)$ if $p = -1$; and $r(f,p) = s(f,-p)$.

Express $a = \sum_{i=1}^n a_i f_i$ where the $f_i$ are distinct elements of $Q_1(\Lambda)$, and
each $a_i \not= 0$. Let $i_1 := 1$, let $p_1 := \sign(a_1)$. If $s(f_1) = r(f_1)$, then $h := p_1
f_1$ has the desired property. Otherwise, let $v_0 = r(f_1, p_1)$ and $v_1 = s(f_1, p_1)$. Since
the coefficient of $v_1$ in $\partial_1(a)$ is zero, there must exist $i_2$ such that the
coefficient of $v_1$ in $\partial_1(a_{i_2}f_{i_2})$ is nonzero with the opposite sign to that in $\partial_1(p_{1} f_{i_1})$; let $p_2
:= \sign(a_{i_2})$ and let $v_2 = s(f_{i_2}, p_2)$. Observe that $r(f_{i_2}, p_2) = s(f_{i_1},
p_1)$. We may continue iteratively, as long as the $v_i$ are all distinct, to chose an index $i_j$
such that $p_j := \sign(a_{i_j})$ has the property that the coefficient of $v_{j-1}$ in
$\partial_1(p_j f_{i_j})$ has the opposite sign to that in $\partial_1(p_{j-1} f_{i_{j-1}})$ for
each $j$. We then set $v_j := s(f_{i_j}, p_j)$, and observe that $r(f_{i_j}, p_j) = v_{j-1}$. Since
there are only finitely many nonzero coefficients in $a$, this process must terminate: there is a
first $l$ such that $v_l \in \{v_0, v_1, \dots, v_{l-1}\}$; say $v_l = v_q$ where $q < l$. Then $h
:= \sum^l_{j = q+1} p_j f_{i_j}$ is a simple closed trail. Since $p_j=\sign(a_{i_j})$ for each $j$,
we have $N(a - h) = N(a) - (l - q) < N(a)$ as required.
\end{proof}

\section{Fundamental results}\label{sec:theorems}

In this section we prove versions of a number of standard results in homology theory which suggest
that our notion of homology for $k$-graphs is a reasonable one. In Appendix~\ref{app:grandis}, we will
show that each $k$-graph determines in a fairly natural way a cubical set, and that our homology
then agrees with that of Grandis \cite{Grandis2005}. So a number of results in this section could
be recovered from Grandis' work. However, it seems worthwhile to present self-contained proofs
which are consistent with the notation and conventions associated with $k$-graphs.

We begin with a version of the  K\"{u}nneth formula for our homology (see
Theorem~\ref{thm:kunneth}). In order to do this we must show how our chain complexes behave with
respect to cartesian product of $k$-graphs.

Recall from Example \ref{ex:natural_qm} that given a cartesian product graph $\Lambda_1 \times
\Lambda_2$ there are quasimorphisms $\psi_i : \Lambda_1 \times \Lambda_2 \to \Lambda_i$ consistent
with the projections $\pi_i : \NN^{k_1 + k_2} \to \NN^{k_i}$.

\begin{lem}\label{lem:cube iso}
Let $(\Lambda_i,d_i)$ be a $k_i$-graph for $i = 1,2$ and $\Lambda_1 \times
\Lambda_2$ the associated cartesian product $(k_1+k_2)$-graph. Then for $r \ge
0$, we have $Q_r(\Lambda) =
\bigsqcup_{r_1+r_2=r} Q_{r_1} ( \Lambda_1 ) \times Q_{r_2} (\Lambda_2)$. Hence there is an
isomorphism
\begin{equation}\label{eq:cube bijection}
    \Psi_r : C_r (\Lambda_1 \times \Lambda_2) \cong \bigoplus_{r_1 + r_2 = r} C_{r_1}(\Lambda_1) \otimes C_{r_2}(\Lambda_2)
\end{equation}
given by $\Psi_r(\lambda_1, \lambda_2) = \lambda_1 \otimes \lambda_2$.
\end{lem}
\begin{proof}
For the first assertion, just note that $(d_1 \times d_2)(\lambda_1, \lambda_2) \le \1_{k_1 + k_2}$
if and only if $d_i(\lambda_i) \le \1_{k_i}$ for $i = 1,2$. So
\begin{align*}
Q_r(\Lambda_1 \times \Lambda_2)
    &= \{(\lambda_1, \lambda_2): (d_1 \times d_2)(\lambda_1, \lambda_2) \le \1_{k_1 + k_2},
        |\lambda_1| + |\lambda_2| = r\} \\
    &= \bigsqcup_{r_1 + r_2 = r} \{(\lambda_1, \lambda_2):
    d_i(\lambda_i) \le \1_{k_i}, |\lambda_i| = r_i  \text{ for } i = 1, 2 \} \\
    &= \bigsqcup_{r_1+r_2=r} Q_{r_1} ( \Lambda_1 ) \times Q_{r_2} (\Lambda_2).
\end{align*}

The second assertion follows from Remark~\ref{rmk:F.A.G. maps}.
\end{proof}

Recall from \cite[V.9]{MacLane} that if $K$ and $L$ are chain complexes with boundary maps
$\partial^K_r : K_r \to K_{r-1}$ and $\partial^L_r : L_r \to L_{r-1}$, then the tensor complex $K
\otimes L$ is given by
\[
(K \otimes L)_r = \bigoplus_{r_1 + r_2 = r} K_{r_1} \otimes L_{r_2},
\]
with boundary maps
\begin{equation}\label{eq:tensor boundary}
\partial^{K \otimes L}_{r_1 + r_2}(k \otimes l) := \partial^K_{r_1}(k) \otimes l + (-1)^{r_1} k \otimes \partial^L_{r_2}(l)
\quad\text{ for all } k \in K_{r_1}\text{ and }l \in L_{r_2}.
\end{equation}
The following is an analog of \cite[Theorem 2.7]{Grandis2005}.

\begin{prop} \label{prop:tensorcat}
Let $\Lambda_i$ be a $k_i$-graph for $i = 1,2$. The isomorphisms $\Psi_r$ of Lemma~\ref{lem:cube iso} induce an
isomorphism of complexes $\Psi : C_*(\Lambda_1 \times \Lambda_2) \to C_*(\Lambda_1) \otimes C_*(\Lambda_2)$.
\end{prop}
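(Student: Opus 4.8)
The plan is to prove that the family $\Psi = (\Psi_r)_r$ is a chain map. Since each $\Psi_r$ is already an isomorphism of abelian groups by Lemma~\ref{lem:cube iso}, once $\Psi$ is known to commute with the boundary maps it is automatically an isomorphism of complexes. Because every map involved is a homomorphism of free abelian groups, it is enough to verify the identity $\partial^{\otimes}_r \circ \Psi_r = \Psi_{r-1} \circ \partial_r$ on a single generating cube $(\lambda_1, \lambda_2) \in Q_{r_1}(\Lambda_1) \times Q_{r_2}(\Lambda_2) \subseteq Q_r(\Lambda_1 \times \Lambda_2)$, where $r_1 + r_2 = r$ (here $\partial^{\otimes}$ denotes the tensor-complex boundary of \eqref{eq:tensor boundary}).

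The first step is to compute the faces of a product cube in terms of the component faces. Writing $d(\lambda_1) = e_{a_1} + \cdots + e_{a_{r_1}}$ with $a_1 < \cdots < a_{r_1} \le k_1$ and $d(\lambda_2) = e_{b_1} + \cdots + e_{b_{r_2}}$ with $b_1 < \cdots < b_{r_2} \le k_2$, the nonzero directions of $(\lambda_1, \lambda_2)$ in $\NN^{k_1+k_2}$ are ordered as $a_1 < \cdots < a_{r_1} < k_1 + b_1 < \cdots < k_1 + b_{r_2}$, since every $\Lambda_1$-direction precedes every shifted $\Lambda_2$-direction. Using that factorisation in the product category is performed coordinatewise together with the description $F_i^0(\lambda) = \lambda(0, d(\lambda)-e_{i_j})$, $F_i^1(\lambda) = \lambda(e_{i_j}, d(\lambda))$ from Remark~\ref{rmk:2faces}, I would check that
\[
F_i^\ell(\lambda_1, \lambda_2) = (F_i^\ell(\lambda_1), \lambda_2) \quad (1 \le i \le r_1), \qquad
F_i^\ell(\lambda_1, \lambda_2) = (\lambda_1, F_{i-r_1}^\ell(\lambda_2)) \quad (r_1 < i \le r).
\]

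The second step is to substitute these faces into the boundary formula \eqref{eq:bondarydef} and split the double sum at $i = r_1$. After applying $\Psi_{r-1}$, the terms with $i \le r_1$ reassemble as $\partial_{r_1}(\lambda_1) \otimes \lambda_2$, while in the terms with $i > r_1$ the reindexing $j = i - r_1$ extracts a sign $(-1)^{i+\ell} = (-1)^{r_1}(-1)^{j+\ell}$, so those terms reassemble as $(-1)^{r_1}\, \lambda_1 \otimes \partial_{r_2}(\lambda_2)$. The total is
\[
\Psi_{r-1}\bigl(\partial_r(\lambda_1, \lambda_2)\bigr) = \partial_{r_1}(\lambda_1) \otimes \lambda_2 + (-1)^{r_1}\, \lambda_1 \otimes \partial_{r_2}(\lambda_2),
\]
which is exactly $\partial^{\otimes}_r(\lambda_1 \otimes \lambda_2) = \partial^{\otimes}_r(\Psi_r(\lambda_1,\lambda_2))$ by \eqref{eq:tensor boundary}. (The degenerate cases $r_1 = 0$ or $r_2 = 0$ are covered by reading $\partial_0$ as the zero map.)

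The main obstacle is the sign bookkeeping: one must confirm that the reindexing of the $\Lambda_2$-faces produces precisely the Koszul sign $(-1)^{r_1}$ demanded by the tensor-complex convention in \eqref{eq:tensor boundary}. This works exactly because the $r_1$ directions coming from $\Lambda_1$ occupy the first $r_1$ slots in the global ordering of the nonzero directions, so each $\Lambda_2$-face index is shifted by precisely $r_1$. Everything else is a routine check on generators.
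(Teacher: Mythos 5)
Your proposal is correct and follows essentially the same route as the paper's proof: the same face formula $F^\ell_j(\lambda_1,\lambda_2) = (F^\ell_j(\lambda_1),\lambda_2)$ for $j \le r_1$ and $(\lambda_1, F^\ell_{j-r_1}(\lambda_2))$ for $j > r_1$, the same split of the boundary sum at $j = r_1$, and the same reindexing that produces the Koszul sign $(-1)^{r_1}$ matching \eqref{eq:tensor boundary}. The explicit observation that the $\Lambda_1$-directions occupy the first $r_1$ slots of the global ordering in $\NN^{k_1+k_2}$ is exactly the point the paper relies on, and your handling of the degenerate cases $r_1 = 0$ or $r_2 = 0$ via $\partial_0 = 0$ is a harmless extra remark.
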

\begin{proof}
Fix $r_1, r_2$ such that $0 \le r_i \le k_i$ for $i=1, 2$ and set $r=r_1+r_2$. Let $\lambda_i \in
Q_{r_i}(\Lambda_i)$ ($i =1,2)$. Then for each $0 \le j \le k_1 + k_2$ and $\ell \in \{0,1\}$,
\[
F^\ell_j (\lambda_1 , \lambda_2)   =
    \begin{cases}
        (  F^\ell_j ( \lambda_1 ) , \lambda_2 ) &\text{ if $1 \le j \le r_1$} \\
        ( \lambda_1 ,  F^\ell_{j-r_1} (\lambda_2 ) ) &\text{ if $r_1 +1 \le j \le r_1 + r_2$}.
    \end{cases}
\]
Hence by~\eqref{eq:bondarydef} we may calculate:
\begin{align}
\partial_r(\lambda_1 , \lambda_2)
    &=  \sum_{\ell=0}^1 \sum^r_{j=1} (-1)^{\ell+j}  F_j^\ell ( \lambda_1 , \lambda_2 )  \nonumber \\
    &=  \sum_{\ell=0}^1 \Big(\sum_{j=1}^{r_1} (-1)^{\ell+j} ( F^\ell_j ( \lambda_1 ) , \lambda_2 )
        + \sum_{j= r_1+1}^{r_1 + r_2} (-1)^{\ell+j} ( \lambda_1 , F^\ell_{j-r_1} (\lambda_2 )) \Big) \nonumber \\
    &= \sum_{\ell=0}^1 \sum_{j=1}^{r_1} (-1)^{\ell+j}(  F^\ell_j ( \lambda_1 ) , \lambda_2 )
        +  \sum_{\ell=0}^1\sum_{h = 1}^{r_2} (-1)^{\ell + h + r_1} ( \lambda_1 ,  F^\ell_{h} (\lambda_2 ) )
        \nonumber \\
    &= ( \partial_{r_1}(\lambda_1), \lambda_2 ) + (-1)^{r_1} ( \lambda_1 , \partial_{r_2}(\lambda_2) )
 \label{eq:partialr} .
\end{align}
It remains to show that for all $r$,
\[
\partial_r(\Psi_r ( \lambda_1 , \lambda_2 ) ) =
\Psi_{r-1}( \partial_r ( \lambda_1 , \lambda_2 ) ).
\]
By definition of the boundary map $\partial_r$ on $C_{r_1} ( \Lambda_1 ) \otimes C_{r_2} (
\Lambda_2 )$ (see~\eqref{eq:tensor boundary}), we have
\begin{align*}
\partial_r(\Psi_r(\lambda_1 , \lambda_2 ))
    &= \partial_r(\lambda_1 \otimes \lambda_2) \\
    &= \partial_{r_1}(\lambda_1) \otimes \lambda_2 + (-1)^{r_1} \lambda_1 \otimes \partial_{r_2}(\lambda_2) \\
    &= \Psi_{r-1}( \partial_{r_1}(\lambda_1), \lambda_2 ) + (-1)^{r_1} ( \lambda_1 , \partial_{r_2}(\lambda_2) ),
\end{align*}
and this is equal to $\Psi_{r-1}( \partial_r ( \lambda_1 , \lambda_2 ) )$ by~\eqref{eq:partialr}.
\end{proof}

We may now state a K\"{u}nneth formula for our homology. The map $\alpha$ was considered in
\cite[Theorem 2.7]{Grandis2005}.

\begin{thm} \label{thm:kunneth}
Let $\Lambda_i$ be a $k_i$-graph for $i = 1,2$.  Then there is a split exact
sequence
\[
0 \to \bigoplus_{r_1 +r_2 = r} H_{r_1} ( \Lambda_1 ) \otimes H_{r_2} ( \Lambda_2 )
\xrightarrow{\alpha}  H_r ( \Lambda_1 \times \Lambda_2 ) \xrightarrow{\beta}
\bigoplus_{r_1 +r_2 = r-1} \Tor\big(H_{r_1} ( \Lambda_1 ), H_{r_2} ( \Lambda_2 )\big) \to 0.
\]
The homomorphisms $\alpha$ and $\beta$ are natural with respect to maps induced by $k$-graph
morphisms, but the splitting is not natural.
\end{thm}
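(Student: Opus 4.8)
The plan is to reduce the statement to the classical algebraic Künneth theorem for chain complexes over a principal ideal domain, which is exactly what $\ZZ$ provides. The essential bridge has already been built: Proposition~\ref{prop:tensorcat} furnishes an isomorphism of complexes $\Psi : C_*(\Lambda_1 \times \Lambda_2) \to C_*(\Lambda_1) \otimes C_*(\Lambda_2)$. So passing to homology, $H_r(\Lambda_1 \times \Lambda_2) \cong H_r\big(C_*(\Lambda_1) \otimes C_*(\Lambda_2)\big)$ for every $r$. The first thing I would do is verify that the hypotheses of the standard Künneth theorem (see e.g.\ \cite[V.10]{MacLane}) are met: each $C_r(\Lambda_i) = \ZZ Q_r(\Lambda_i)$ is a \emph{free} abelian group, hence the chain complexes $C_*(\Lambda_i)$ are complexes of free $\ZZ$-modules. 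Freeness of both factors is the hypothesis under which the algebraic Künneth sequence exists and splits.

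With that in hand, the main step is simply to quote the algebraic Künneth theorem, which for free chain complexes $K, L$ over $\ZZ$ gives a natural short exact sequence
\[
0 \to \bigoplus_{r_1+r_2=r} H_{r_1}(K) \otimes H_{r_2}(L) \xrightarrow{\alpha} H_r(K \otimes L) \xrightarrow{\beta} \bigoplus_{r_1+r_2=r-1} \Tor\big(H_{r_1}(K), H_{r_2}(L)\big) \to 0
\]
that is split (non-naturally). Taking $K = C_*(\Lambda_1)$ and $L = C_*(\Lambda_2)$ and transporting the middle term along the isomorphism $\Psi$ of Proposition~\ref{prop:tensorcat} yields precisely the stated sequence, with $H_r(K \otimes L)$ replaced by $H_r(\Lambda_1 \times \Lambda_2)$.

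The remaining work is the naturality claim for $\alpha$ and $\beta$. Here I would argue that a $k_i$-graph morphism $\psi_i : \Lambda_i \to \Lambda_i'$ induces a chain map $C_*(\Lambda_i) \to C_*(\Lambda_i')$, and that the product morphism $\psi_1 \times \psi_2$ corresponds under $\Psi$ to the tensor product of these chain maps; this is a direct check using that $\Psi_r(\lambda_1, \lambda_2) = \lambda_1 \otimes \lambda_2$ and that $\psi_i$ is degree- and factorisation-preserving. The naturality of $\alpha$ and $\beta$ then follows from the naturality built into the algebraic Künneth sequence, since both the external product map $\alpha$ and the boundary-type map $\beta$ are defined functorially from chain maps. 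I expect the only genuine subtlety to be bookkeeping: confirming that the identification $\Psi$ intertwines the map induced by $\psi_1 \times \psi_2$ with the map induced by $(\psi_1)_* \otimes (\psi_2)_*$, so that the naturality square for our $k$-graph morphisms commutes. The failure of the splitting to be natural is inherited directly from the algebraic case and requires no separate argument.
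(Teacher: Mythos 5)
Your proposal is correct and follows essentially the same route as the paper: the published proof simply invokes Proposition~\ref{prop:tensorcat} together with Mac Lane's algebraic K\"unneth theorem \cite[Theorem V.10.4]{MacLane}, using that each $C_r(\Lambda_i)$ is torsion free. Your additional remarks on verifying freeness and on checking that $\Psi$ intertwines $(\psi_1\times\psi_2)_*$ with $(\psi_1)_*\otimes(\psi_2)_*$ are exactly the bookkeeping the paper leaves implicit.
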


\begin{proof}
The result follows from Proposition \ref{prop:tensorcat} and \cite[Theorem V.10.4]{MacLane} using the fact
that $C_r ( \Lambda )$ is torsion free for each $r$.
\end{proof}

\begin{cor} \label{cor:alphaisiso}
Let $\Lambda_i$ be a $k_i$-graph for $i = 1,2$. Suppose that for some $i$ the groups $H_r ( \Lambda_i )$ are all torsion-free.
Then the map $\alpha$ in Theorem \ref{thm:kunneth} is an isomorphism, so
\[
H_r ( \Lambda_1 \times \Lambda_2 )
    \cong \bigoplus_{r_1 +r_2 = r} H_{r_1} ( \Lambda_1 ) \otimes H_{r_2} ( \Lambda_2 ).
\]
\end{cor}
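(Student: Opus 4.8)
The plan is to deduce this directly from the split exact sequence of Theorem~\ref{thm:kunneth}, observing that the torsion-freeness hypothesis annihilates the Tor term. By the symmetry of the situation in the two factors, I would assume without loss of generality that it is $H_r(\Lambda_1)$ that is torsion-free for every $r$; the case $i = 2$ is handled identically.

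First I would recall the standard homological-algebra fact that, over the principal ideal domain $\ZZ$, a torsion-free abelian group is flat, and consequently $\Tor(A, B) = 0$ whenever either $A$ or $B$ is torsion-free. Applying this with $A = H_{r_1}(\Lambda_1)$, which is torsion-free by hypothesis, shows that $\Tor\big(H_{r_1}(\Lambda_1), H_{r_2}(\Lambda_2)\big) = 0$ for all $r_1, r_2$.

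Consequently, for each $r$ the rightmost term $\bigoplus_{r_1 + r_2 = r - 1} \Tor\big(H_{r_1}(\Lambda_1), H_{r_2}(\Lambda_2)\big)$ of the Künneth sequence is the zero group. Hence the map $\beta$ of Theorem~\ref{thm:kunneth} is the zero homomorphism, and exactness of the sequence forces $\alpha$ to be surjective; since $\alpha$ is already injective by Theorem~\ref{thm:kunneth}, it is an isomorphism. This gives the asserted isomorphism $H_r(\Lambda_1 \times \Lambda_2) \cong \bigoplus_{r_1 + r_2 = r} H_{r_1}(\Lambda_1) \otimes H_{r_2}(\Lambda_2)$.

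Because the argument is a routine consequence of the Künneth theorem, I do not anticipate any genuine obstacle; the only point requiring a moment's care is the vanishing of Tor, which is immediate from the flatness of torsion-free groups over $\ZZ$. One could even sidestep citing flatness by recalling that $\Tor$ of abelian groups is symmetric and that $\Tor(A, -)$ vanishes identically when $A$ is torsion-free, as is seen by computing from a free resolution of $A$.
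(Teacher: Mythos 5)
Your argument is correct and is precisely the reasoning the paper leaves implicit: the torsion-freeness hypothesis kills every Tor term (since torsion-free abelian groups are flat over $\ZZ$ and Tor is symmetric), so the split exact sequence of Theorem~\ref{thm:kunneth} collapses to the stated isomorphism. Nothing further is needed.
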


\begin{example}\label{ex:torus}
For $k \ge 2$, we have $T_k \cong T_1 \times \cdots \times T_1$ by Examples~\ref{ex:prodex} (1).
We claim that for  $0 \le r \le k$ we have
\[
H_r ( T_k ) \cong \ZZ^{\binom{k}{r}} .
\]
For $k = 0, 1$ this follows by Examples \ref{ex:donebydef}. The general case follows by induction on $k$ using
Corollary~\ref{cor:alphaisiso}.
\end{example}

\begin{dfn}
We say that a $k$-graph $\Lambda$ is \emph{acyclic} if $H_0(\Lambda) \cong \ZZ$ and $H_r(\Lambda)
= 0$ for all $r \ge 1$.
\end{dfn}

\begin{rmk} \label{rmk:acyclic}
Let $\Lambda_i $ be an acyclic $k_i$-graph for $i=1,2$. Then by Corollary \ref{cor:alphaisiso}
it follows that $\Lambda_1 \times \Lambda_2$ is an acyclic $k_1+k_2$-graph.
\end{rmk}

\begin{examples} \label{ex:acyclicex}
\begin{enumerate}
\item Note that by Examples~\ref{ex:prodex} (2) we have $\Delta_k \cong \Delta_1 \times \cdots \times \Delta_1$ for $k \ge 2$. By Example \ref{ex:delta1}
$\Delta_1$ is acyclic, and so by Remark \ref{rmk:acyclic} it follows that $\Delta_k$ is acyclic for all $k$. Indeed for $k \ge 1$ the $k$-graph $\Delta_k$ has the same
homology as $\RR^k$.

\item Let $\Lambda$ be a connected $1$-graph which is a tree. By Proposition
    \ref{prop:H0ofconn} we have $H_0 ( \Lambda ) \cong \ZZ$. Since $\Lambda$ contains no
    closed undirected paths, $C_1 ( \Lambda )$ has no closed trails.
   Thus by Proposition
    \ref{prop:trailsforker} $\ker  ( \partial_1 ) = 0$ and hence $H_1 ( \Lambda ) = 0$.
Since $H_r ( \Lambda )= 0$ for $r >1$, it follows that $\Lambda$ is acyclic.
\end{enumerate}
\end{examples}

The proof of the next result follows the argument used in~\cite[II.4.1]{Brown}. This result may also be
deduced from \cite[Theorem 3.3]{Grandis2005} using the identification of our homology with that of the
corresponding cubical set established in Theorem~\ref{thm:complexiso}.

\begin{thm} \label{thm:freeaction}
Suppose that $\Lambda$ is an acyclic $k$-graph. If $G$ is a
discrete group acting freely on $\Lambda$, then $H_*(\Lambda/G)
\cong H_*(G, \ZZ)$.
\end{thm}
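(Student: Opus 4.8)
The plan is to recognise the augmented chain complex of $\Lambda$ as a free resolution of $\ZZ$ over the group ring $\ZZ G$, following the argument of \cite[II.4.1]{Brown}. First I would observe that the free action of $G$ on $\Lambda$ restricts, for each $r$, to a free action on the cube set $Q_r(\Lambda)$: since $G$ acts by $k$-graph automorphisms, any $g$ fixing a cube $\lambda$ also fixes $s(\lambda) \in \Lambda^0$, and freeness on vertices then forces $g$ to be the identity. Consequently each $C_r(\Lambda) = \ZZ Q_r(\Lambda)$ is a free $\ZZ G$-module, with basis a set of orbit representatives, and the boundary maps $\partial_r$ are $\ZZ G$-module homomorphisms because the $G$-action commutes with the face maps $F_j^\ell$.

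Next I would use acyclicity to turn $C_*(\Lambda)$ into a resolution. Let $\theta : C_0(\Lambda) \to \ZZ$ be the augmentation $v \mapsto 1$ from Proposition~\ref{prop:H0ofconn}; it is $G$-equivariant for the trivial action on $\ZZ$. Since $\Lambda$ is acyclic it is connected (Corollary~\ref{cor:conncomp}), so $\theta$ induces the isomorphism $H_0(\Lambda) \cong \ZZ$ and hence $\ker \theta = \image \partial_1$; combined with $H_r(\Lambda) = 0$ for $r \ge 1$, this says that
\[
\cdots \to C_1(\Lambda) \xrightarrow{\partial_1} C_0(\Lambda) \xrightarrow{\theta} \ZZ \to 0
\]
is exact. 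Thus $C_*(\Lambda) \to \ZZ$ is a free resolution of the trivial $\ZZ G$-module $\ZZ$, and by definition of group homology $H_*(G, \ZZ) = H_*\big(\ZZ \otimes_{\ZZ G} C_*(\Lambda)\big)$.

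It then remains to identify $\ZZ \otimes_{\ZZ G} C_*(\Lambda)$ with $C_*(\Lambda/G)$. For any free $G$-set $X$ one has $\ZZ \otimes_{\ZZ G} \ZZ X \cong \ZZ (X/G)$, so applying this with $X = Q_r(\Lambda)$ gives $\ZZ \otimes_{\ZZ G} C_r(\Lambda) \cong \ZZ (Q_r(\Lambda)/G)$. The crux, and the step I expect to be the main obstacle, is to show that forming the quotient commutes with forming cube sets: that the quotient map $q : \Lambda \to \Lambda/G$ is a degree-preserving functor inducing a bijection $Q_r(\Lambda)/G \xrightarrow{\cong} Q_r(\Lambda/G)$, and that under these identifications the resulting boundary maps agree. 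Surjectivity and the orbit description of the fibres come from $q$ being the orbit map of a free action, while compatibility of the boundaries follows, exactly as in the functoriality lemma, from $q$ preserving factorisations and hence intertwining the $F_j^\ell$. Granting this, the isomorphisms $\ZZ \otimes_{\ZZ G} C_r(\Lambda) \cong C_r(\Lambda/G)$ commute with the boundary maps, so
\[
H_*(\Lambda/G) = H_*\big(C_*(\Lambda/G)\big) \cong H_*\big(\ZZ \otimes_{\ZZ G} C_*(\Lambda)\big) = H_*(G, \ZZ),
\]
as required.
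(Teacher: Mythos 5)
Your argument is correct and follows essentially the same route as the paper's proof, which likewise mimics \cite[II.4.1]{Brown}: freeness of the $G$-action on each $Q_r(\Lambda)$ makes $C_r(\Lambda)$ a free $\ZZ G$-module, acyclicity turns the augmented complex into a free resolution of $\ZZ$, and the coinvariants $C_*(\Lambda)_G$ (your $\ZZ\otimes_{\ZZ G}C_*(\Lambda)$) are identified with $C_*(\Lambda/G)$. The only difference is cosmetic notation, and you are right to flag the identification $Q_r(\Lambda)/G\cong Q_r(\Lambda/G)$ as the point needing care --- the paper asserts it without elaboration.
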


\begin{proof}
If $M$ is a $G$-module, then we write $DM$ for
the submodule of $M$ generated by the elements $\{ gm - m : m
\in M, g \in G\}$. We write $M_G$ for $M/DM$.  Note that  $M \mapsto M_G$ is
a functor from the category of $G$-modules to the category of abelian groups
(so it maps a complex of $G$-modules to a complex of abelian groups).
If $G$ acts on a set $X$ then $\ZZ X$ may be regarded as a $G$-module and
$\ZZ X_G \cong \ZZ(X/G)$ (see \cite[\S\,II.2]{Brown}).

Since $G$ acts freely on $\Lambda$, it acts freely on each $Q_r(\Lambda)$. Thus $C_r( \Lambda ) = \ZZ Q_r( \Lambda )$ is
a free $G$-module. We have
\[
\ZZ Q_r(\Lambda)_G
\cong \ZZ Q_r(\Lambda/G).
\]
Moreover, this isomorphism is compatible with the boundary maps. So if $C_*(\Lambda)_G$ denotes the complex obtained from
$C_*(\Lambda)$ by applying the functor $M \mapsto M_G$, then $C_*(\Lambda)_G \cong C_*(\Lambda/G)$. Since
$\Lambda$ is acyclic, the sequence
\[
\dots \stackrel{\partial_3}{\to} C_2(\Lambda) \stackrel{\partial_2}{\to} C_1(\Lambda)
\stackrel{\partial_1}{\to} C_0(\Lambda) \stackrel{\varepsilon}{\to} \ZZ \to 0
\]
is a resolution of $\ZZ$ by free $G$-modules. Since the complex $C_*(\Lambda)_G$ is isomorphic to the complex
$C_*(\Lambda/G)$, we have
\[
H_*(C_*(\Lambda)_G) \cong H_*(C_*(\Lambda/G)) = H_*(\Lambda/G).
\]
Therefore, $H_*(G,\ZZ) \cong H_*(\Lambda/G)$.
\end{proof}

Recall that  the fundamental group $\pi_1 ( \Lambda )$ of a connected $1$-graph $\Lambda$ is free (see for example  \cite[\S
2.1.8]{Stillwell} or \cite[\S 4]{KumjianPask:ETDS99}) and the universal cover $T$ is a tree.  Thus $\Lambda$ may be realised
as the quotient of $T$  by  the action of $\pi_1 ( \Lambda )$; moreover, if $ \Lambda$ has finitely many vertices and edges, then
$\pi_1 ( \Lambda ) \cong \FF_p$, where $\FF_p$ is the free group on $p$ generators and $p = \vert  \Lambda^1 \vert -\vert
\Lambda^0 \vert +1$ (see \cite[\S I.3.3, Theorem 4]{Serre}). Since $T$ is acyclic, we obtain the following result.

\begin{cor} \label{cor:1graph}
Let $\Lambda$ be a connected $1$-graph. Then $H_1 ( \Lambda ) \cong H_1 ( \pi_1(\Lambda) , \ZZ )$. In particular if
$\Lambda$ has finitely many vertices and edges, then $ \pi_1(\Lambda) \cong \FF_p$ where $p = | \Lambda^1 | - |
\Lambda^0 | +1$ and so
\[
H_1(\Lambda )  \cong H_1 (\FF_p, \ZZ ) \cong \ZZ^p.
\]
\end{cor}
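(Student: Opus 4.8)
The plan is to apply Theorem~\ref{thm:freeaction} directly to the universal cover. As recalled immediately before the statement, the universal cover $T$ of the connected $1$-graph $\Lambda$ is a tree, so by Examples~\ref{ex:acyclicex}(2) it is acyclic, and $\Lambda$ is the quotient $T/\pi_1(\Lambda)$ under the free action of $G = \pi_1(\Lambda)$. First I would feed these two facts into Theorem~\ref{thm:freeaction}, with the acyclic $k$-graph there taken to be $T$ and the acting group taken to be $G = \pi_1(\Lambda)$, obtaining $H_*(\Lambda) = H_*(T/\pi_1(\Lambda)) \cong H_*(\pi_1(\Lambda), \ZZ)$. Reading off the degree-one component then gives the first assertion $H_1(\Lambda) \cong H_1(\pi_1(\Lambda), \ZZ)$.

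For the ``in particular'' clause, I would invoke the cited result of Serre to identify $\pi_1(\Lambda) \cong \FF_p$ with $p = |\Lambda^1| - |\Lambda^0| + 1$ whenever $\Lambda$ has finitely many vertices and edges; the previous paragraph then yields $H_1(\Lambda) \cong H_1(\FF_p, \ZZ)$. It remains to compute $H_1(\FF_p, \ZZ)$. Here I would use the standard identification of the first integral homology of a group with its abelianisation, $H_1(G, \ZZ) \cong G/[G,G]$. Since the abelianisation of the free group on $p$ generators is $\ZZ^p$, this gives $H_1(\FF_p, \ZZ) \cong \ZZ^p$, completing the chain of isomorphisms.

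There is essentially no serious obstacle here: the corollary is a formal consequence of Theorem~\ref{thm:freeaction} combined with the covering-space facts collected in the preamble. The only points needing any care are confirming that the deck action of $\pi_1(\Lambda)$ on $T$ is genuinely free, so that Theorem~\ref{thm:freeaction} applies, and that $T$, though typically infinite, is still acyclic; both of these are supplied, respectively, by the universal-cover construction and by the observation that Examples~\ref{ex:acyclicex}(2) imposes no finiteness hypothesis on the tree. The concluding computation $H_1(\FF_p, \ZZ) \cong \ZZ^p$ is routine group homology and could simply be quoted from a standard reference.
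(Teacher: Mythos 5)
Your proposal is correct and follows exactly the route the paper takes: the paper derives the corollary from the paragraph preceding it, namely that the universal cover $T$ is a tree (hence acyclic by Examples~\ref{ex:acyclicex}(2), with no finiteness hypothesis needed), that $\pi_1(\Lambda)$ acts freely on $T$ with quotient $\Lambda$, and then Theorem~\ref{thm:freeaction} gives $H_*(\Lambda)\cong H_*(\pi_1(\Lambda),\ZZ)$, with the finite case handled via Serre's formula and the identification of $H_1(\FF_p,\ZZ)$ with the abelianisation $\ZZ^p$. There is nothing to add.
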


\begin{examples} \label{ex:fga}
\begin{enumerate}
\item\label{it:Cayley} Recall from Examples~\ref{ex:kgraphs} (\ref{it:Bsubn}) that $B_n$ is the path category of a directed
    graph with a single vertex and $n$ edges, regarded as a $1$-graph. The universal cover of $B_n$, which we denote $A_n$, is
    the skew-product $B_n \times_{c} \FF_n$, and can be identified with the Cayley graph of $\FF_n$, the free group on $n$
    generators. By \cite[Remark 5.6]{KP2000} $\FF_n$ acts freely on $A_n$ with $ A_n/\FF_n\cong B_n$. By
    Corollary~\ref{cor:1graph} we have $H_1 ( B_n ) \cong \ZZ^n$. Hence $B_n$ has the same homology as the wedge of $n$
    circles.

\item Let $H$ be a subgroup of $\ZZ^k$. Then as in \cite[\S 6.4]{kps} $H$ acts freely on
    $\Delta_k$. Since $\Delta_k$ is acyclic, by Theorem \ref{thm:freeaction} we have $H_* (
    \Delta_k / H ) \cong H_* ( H , \ZZ )$.  If $H \cong \ZZ^q$, then for $0 \le r \le k$ we
    have (cf.\ Example~\ref{ex:torus})
    \[
        H_r ( \Delta_k / H ) \cong \ZZ^{\left( \begin{smallmatrix} q \\ r \end{smallmatrix} \right)} .
    \]
    Hence $\Delta_k / H$ has the same homology as the $q$-torus. If $H$ has finite index then $q = k$ and the quotient graph
    $\Delta_k / H$ may be viewed as yet another $k$-graph analog of the $k$-torus
     (note $\Delta_k / H = T_k$ when $H = \ZZ^k$).

\item The following example indicates that Theorem~\ref{thm:freeaction} is in practise less
    useful than it might appear because it is difficult to recognise acyclic $k$-graphs (short
    of explicitly computing their homology). In particular one might expect that a pullback of
    an acyclic $k$-graph by a full-rank endomorphism of $\NN^k$ is itself acyclic, but this is
    not so.

    Let $\Lambda$ be the $2$-graph with $\Lambda^0 = \{v\}$,  $\Lambda^{e_1} = \{a_1, a_2\}$, $\Lambda^{e_2} =
    \{b_1, b_2\}$, and factorisation property determined by $a_ib_j = b_ia_j$ for $i, j = 1, 2$. Recall that we denote the
    generators of $\FF_2$ by $h_1$ and $h_2$. There is a functor $\sigma : \Lambda \to \FF_2 \times \ZZ$ determined by
    $\sigma(a_i) = (h_i, 0)$ and $\sigma(b_i) = (h_i, 1)$. Let $\Gamma := \Lambda \times_\sigma (\FF_2 \times \ZZ)$,
    and observe that by \cite[Remark 5.6]{KP2000} $\FF_2 \times \ZZ$ acts freely on $\Gamma$ with quotient $\Lambda$.

    Let $A_2 = B_2 \times_c \FF_2$ as in~(\ref{it:Cayley}) above. Define $g : \NN^2 \to \NN^2$ by $g(m,n) := (m+n, n)$.
    Tedious calculations show that $\Gamma$ is isomorphic to the pullback $g^*(A_2 \times \Delta_1)$\footnote{This is
    not meant to be obvious. After unraveling the definitions of $\Gamma$ and of $g^*(A_2 \times \Delta_1)$, one can check
    that the formulas $(a_i, (h,n)) \mapsto \big(((f_i, h),(n,n)), (1,0)\big)$ and $(b_i, (h,n)) \mapsto \big(((f_i, h),(n,n+1)),
    (0,1)\big)$ for $i=1,2$ determine the desired isomorphism.}.

    We claim that $\Gamma$ is not acyclic.  Suppose that it is. Then Theorem~\ref{thm:freeaction}
    implies that $H_*(\Lambda) \cong H_*(\FF_2 \times \ZZ)$. By the K\"unneth theorem for group
    homology, since both $H_r(\FF_2)$ and $H_r(\ZZ)$ are trivial for $r \ge 2$,
    \[
        H_2(\FF_2 \times \ZZ) = H_1(\FF_2) \otimes H_1(\ZZ) \cong \ZZ^2.
    \]
    A straightforward computation shows that $a_1b_1$, $a_2b_2$ and $a_1b_2 + a_2b_1$ all belong to $\ker(\partial_2) =
    H_2(\Lambda)$, so the latter has rank at least three, giving a contradiction.

    So $\Gamma$ is not acyclic, despite being a pull-back of the acyclic graph $A_2 \times \Delta_1$
    (see Remark~\ref{rmk:acyclic}) by the full-rank endomorphism $g$.
\end{enumerate}
\end{examples}

We now turn our attention to exact sequences of homology groups associated to automorphisms of $k$-graphs. Recall from
\cite{FPS2009} that if $\Lambda$ is a $k$-graph and $\alpha$ is an automorphism of $\Lambda$, then there is a
$(k+1)$-graph $\Lambda \times_\alpha \ZZ$\label{pg:cpgraph} with morphisms $\Lambda \times \NN$, range and source
maps given by $r(\lambda,n) = (r(\lambda),0)$, $s(\lambda,n) = (\alpha^{-n}(s(\lambda)), 0)$, degree map given by
$d(\lambda,n) = (d(\lambda),n)$ and composition given by $(\lambda,m)(\mu,n) := (\lambda\alpha^m(\mu), m+n)$. In
particular $( \Lambda \times_\alpha \ZZ )^0 = \Lambda^0 \times \{ 0 \}$.

We may describe the cubes of $\Lambda \times_\alpha \ZZ$ in terms of those of $\Lambda$ as follows:
$Q_0 ( \Lambda \times_\alpha \ZZ ) = Q_0 ( \Lambda ) \times \{ 0 \}$ and for $0 \le r \le k$ an
element of $Q_{r+1} ( \Lambda \times_\alpha \ZZ )$ is of the form $(\lambda , 0 )$ where $\lambda
\in Q_{r+1} ( \Lambda )$ or $( \lambda , 1 )$ where $\lambda \in Q_{r} ( \Lambda )$, so
\begin{equation} \label{eq:QrLambdatimesAlpha}
Q_{r+1}(\Lambda \times_\alpha \ZZ) = (Q_{r+1}(\Lambda) \times \{0\}) \sqcup (Q_{r}(\Lambda) \times \{1\}).
\end{equation}
Given an element $a = \sum a_\lambda \lambda \in C_r(\Lambda)$, we shall somewhat inaccurately
write $(a,0) := \sum a_\lambda (\lambda,0)$ and $(a,1) := \sum a_\lambda (\lambda,1)$ for the
corresponding elements of $C_r(\Lambda \times_\alpha \ZZ)$ and $C_{r+1}(\Lambda \times_\alpha
\ZZ)$. With this notation, the boundary map on $C_{r+1}(\Lambda \times_\alpha \ZZ)$ is given by
\begin{equation}\label{eq:cp boundary}
\begin{array}{rcl}
   \partial_{r+1}(\lambda,0) &=& (\partial_{r+1}(\lambda), 0)
        \quad\text{ and }\\
    \partial_{r+1}(\mu,1) &=& (-1)^{r}\big((\alpha^{-1}(\mu), 0) - (\mu,0)\big) + (\partial_r(\mu),1).
\end{array}
\end{equation}

We will deduce our long exact sequence for the homology of $\Lambda \times_\alpha \ZZ$ from the long exact sequence
associated to a mapping-cone complex arising from the chain map $\alpha^{-1} - 1$ (see \cite[Proposition~II.4.3]{MacLane}).
So we recall the definition of the mapping cone complex. Given a chain map $f : A_* \to B_*$, define a complex $M_* = M(f)_*$
by $M_r := A_{r-1} \oplus B_r$ (with the convention that $A_{-1} = \{0\}$) with boundary map
\begin{equation}\label{eq:mapping cone boundary}
    \partial_r(a, b) := (-\partial_{r-1}(a), \partial_r(b) + f(a)).
\end{equation}

If $\alpha$ is an automorphism of a $k$-graph $\Lambda$, then $\alpha^{-1}$ maps cubes to cubes and
intertwines boundary maps, and so induces a chain map $\alpha^{-1} : C_*(\Lambda) \to
C_*(\Lambda)$. Hence $\alpha^{-1} - 1$ is also a chain map from $C_*(\Lambda)$ to itself.

\begin{lem}\label{lem:cp mapping cone}
Let $\Lambda$ be a $k$-graph and let $\alpha$ be an automorphism of $\Lambda$. Then there is an
isomorphism of chain complexes $\psi : C_*(\Lambda \times_\alpha \ZZ) \to M(\alpha^{-1} - 1)_*$
such that
\[
\psi(\lambda,0) = (0, \lambda)\quad\text{ and }\quad \psi(\mu,1) = ((-1)^{r} \mu, 0)
\]
for all $(\lambda,0), (\mu,1) \in Q_{r+1}(\Lambda \times_\alpha \ZZ)$.  Hence, $\psi_*: H_*(\Lambda
\times_\alpha \ZZ) \to H_*(M(\alpha^{-1} - 1)_*)$ is an isomorphism.
\end{lem}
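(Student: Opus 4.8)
The plan is to verify that the explicit map $\psi$ in the statement is, degree by degree, an isomorphism of free abelian groups, and then that it commutes with the two boundary operators; the assertion about homology is then automatic. Here the mapping cone is built from the chain map $f = \alpha^{-1} - 1 : C_*(\Lambda) \to C_*(\Lambda)$ (which is a chain map by the observation recorded just before the statement), so $A_* = B_* = C_*(\Lambda)$ and $M_{r+1} = A_r \oplus B_{r+1} = C_r(\Lambda) \oplus C_{r+1}(\Lambda)$. First I would record, from the decomposition \eqref{eq:QrLambdatimesAlpha}, that $C_{r+1}(\Lambda \times_\alpha \ZZ)$ is free on the generators $(\lambda,0)$ with $\lambda \in Q_{r+1}(\Lambda)$ together with the generators $(\mu,1)$ with $\mu \in Q_r(\Lambda)$. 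The formulas $\psi(\lambda,0) = (0,\lambda)$ and $\psi(\mu,1) = ((-1)^r \mu, 0)$ send each such generator to $\pm 1$ times a generator of $M_{r+1}$, interchanging the two summands; since the signs are units in $\ZZ$ and the assignment is a bijection of generating sets, $\psi_{r+1}$ extends linearly to an isomorphism of abelian groups for every $r$ (in degree $0$, where $A_{-1} = \{0\}$, it is simply the identification $C_0(\Lambda \times_\alpha \ZZ) \cong C_0(\Lambda) = M_0$).

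The substance of the proof is then the verification that $\partial^{M}_{r+1} \circ \psi_{r+1} = \psi_r \circ \partial_{r+1}$, which by linearity need only be checked on the two kinds of generators. For $(\lambda,0)$ one side, computed from \eqref{eq:cp boundary}, is $\psi_r(\partial_{r+1}(\lambda),0) = (0,\partial_{r+1}(\lambda))$, while the mapping-cone differential \eqref{eq:mapping cone boundary} applied to $\psi_{r+1}(\lambda,0) = (0,\lambda)$ has vanishing $A$-component and equals $(0,\partial_{r+1}(\lambda))$; these agree. For $(\mu,1)$ both summands of \eqref{eq:cp boundary} contribute: applying $\psi_r$ to $(-1)^r\big((\alpha^{-1}(\mu),0) - (\mu,0)\big) + (\partial_r(\mu),1)$, and using $\psi_r(\nu,0) = (0,\nu)$ and $\psi_r(\rho,1) = ((-1)^{r-1}\rho,0)$, yields $\big((-1)^{r-1}\partial_r(\mu),\ (-1)^r(\alpha^{-1}(\mu) - \mu)\big)$. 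On the other side, \eqref{eq:mapping cone boundary} applied to $\psi_{r+1}(\mu,1) = ((-1)^r\mu,0)$ produces $\big((-1)^{r+1}\partial_r(\mu),\ (-1)^r(\alpha^{-1}(\mu)-\mu)\big)$, and the two match because $(-1)^{r+1} = (-1)^{r-1}$.

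The delicate point, and the step I expect to be the main obstacle, is precisely this sign bookkeeping in the $(\mu,1)$ case: one must keep straight the factor $(-1)^r$ that $\psi$ inserts in degree $r+1$ against the factor $(-1)^{r-1}$ it inserts in degree $r$, and reconcile these with the minus sign on the first coordinate of the mapping-cone differential \eqref{eq:mapping cone boundary} and the $(-1)^r$ appearing in \eqref{eq:cp boundary}. Once the two generator computations are in hand, linearity upgrades them to the identity $\partial^{M} \circ \psi = \psi \circ \partial$, so $\psi$ is an isomorphism of chain complexes. Finally, since any isomorphism of chain complexes induces isomorphisms on homology in every degree, $\psi_* : H_*(\Lambda \times_\alpha \ZZ) \to H_*(M(\alpha^{-1} - 1)_*)$ is an isomorphism, which is the concluding assertion.
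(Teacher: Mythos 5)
Your proposal is correct and follows essentially the same route as the paper: identify $\psi$ degreewise as an isomorphism of free abelian groups via the generator decomposition~\eqref{eq:QrLambdatimesAlpha}, verify $\partial^M\circ\psi=\psi\circ\partial$ separately on generators of the forms $(\lambda,0)$ and $(\mu,1)$ using \eqref{eq:cp boundary} and \eqref{eq:mapping cone boundary}, and conclude the homology isomorphism from the chain isomorphism. Your sign bookkeeping in the $(\mu,1)$ case matches the paper's computation exactly (the paper leaves the common factor as $(-1)^r(-\partial_r(\mu),(\alpha^{-1}-1)(\mu))$ rather than expanding to $(-1)^{r\pm1}$, but the content is identical).
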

\begin{proof}
Write $M_* := M(\alpha^{-1} - 1)_*$ and $C_* := C_*(\Lambda \times_\alpha \ZZ)$. It is clear that
$\psi$ determines isomorphisms of groups $C_r \cong M_r$. So to see that $\psi$ is an isomorphism
of complexes, it suffices to show that it intertwines the boundary maps on generators. We consider
cubes of the form $(\lambda,0)$ and those of the form $(\mu,1)$ separately. Fix $\lambda \in
Q_{r+1}(\Lambda)$. We have $\partial_{r+1}(\psi(\lambda,0)) = \partial_{r+1}(0,\lambda) = (0,
\partial_{r+1}(\lambda))$ by~\eqref{eq:mapping cone boundary}, and $\psi(\partial_{r+1}(\lambda,0)) =
\psi(\partial_{r+1}(\lambda), 0) = (0, \partial_{r+1}(\lambda))$ by~\eqref{eq:cp boundary}. So
$\partial_{r+1}(\psi(\lambda,0)) = \psi(\partial_{r+1}(\lambda,0))$ as required.

Now fix $\mu \in Q_r(\Lambda)$. Then we have
\begin{flalign*}
&&\psi(\partial_{r+1}(\mu,1))
        &= \psi\big((-1)^{r}\big((\alpha^{-1}(\mu), 0) - (\mu,0)\big) + (\partial_r(\mu),1)\big) &\\
        &&&= (-1)^r \big(\psi(\alpha^{-1}(\mu), 0) - \psi(\mu,0)\big) + \psi(\partial_r(\mu),1) &\\
        &&&= (-1)^r (0, (\alpha^{-1}-1)(\mu)) + (-1)^{r-1}(\partial_r(\mu),0) &\\
        &&&= (-1)^r (-\partial_r(\mu), (\alpha^{-1}-1)(\mu)).&
\intertext{On the other hand,}
&& \partial_{r+1}\psi((\mu,1))
        &= (-1)^r\partial_{r+1}(\mu,0) &\\
        &&&= (-1)^r (-\partial_{r}(\mu), \partial_{r+1}(0) + (\alpha^{-1} - 1)(\mu)) &\\
        &&&= (-1)^r (-\partial_r(\mu), (\alpha^{-1}-1)(\mu)). &\qedhere
\end{flalign*}
\end{proof}

Now recall from \cite[Proposition~4.3]{MacLane}, that a chain map $f : A_* \to B_*$ determines a
long exact sequence
\begin{equation}\label{eq:mapping cone sequence}
 \cdots  \to H_r(B_*) \xrightarrow{\iota_*} H_{r}(M(f)_*) \xrightarrow{\pi_*}
    H_{r-1}(A_*) \xrightarrow{f_*} H_{r-1}(B_*)  \to \cdots
\end{equation}
where $\iota_* : H_r(B_*) \to H_r(M(f)_*)$ is induced by the inclusion map $\iota : B_r \to M(f)_r$, and $\pi_* : H_r(M(f)_*)
\to H_{r-1}(A_*)$ is induced by the projection $\pi : M(f)_r \to A_{r-1}$.

The following result gives an exact sequence which may be regarded as an analog of the Pimsner-Voiculescu
sequence for crossed products of $C^*$-algebras (cf. \cite[Theorem~2.4]{PV1980},
\cite[Theorem~10.2.1]{Blackadar}).

\begin{thm}\label{thm:PV}
Let $\Lambda$ be a $k$-graph, and let $\alpha$ be an automorphism of $\Lambda$.  Then there is an
exact sequence
\[\begin{split}
    0 \longrightarrow H_{k+1} ( \Lambda \times_\alpha \ZZ  ) &\stackrel{\pi_*}{\longrightarrow}
        H_k ( \Lambda  ) \xrightarrow{1-\alpha_*} H_k ( \Lambda ) \stackrel{\iota_*}{\longrightarrow}
        H_k(\Lambda \times_\alpha \ZZ) \longrightarrow \cdots \\
    \cdots &\longrightarrow H_1 ( \Lambda \times_\alpha \ZZ ) \stackrel{\pi_*}{\longrightarrow} H_0(\Lambda)
        \xrightarrow{1-\alpha_*} H_0(\Lambda) \stackrel{\iota_*}{\longrightarrow}
        H_0(\Lambda \times_\alpha \ZZ) \longrightarrow 0.
\end{split}\]
\end{thm}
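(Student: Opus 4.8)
The plan is to derive Theorem~\ref{thm:PV} by specializing the mapping-cone long exact sequence~\eqref{eq:mapping cone sequence} to the chain map $f = \alpha^{-1} - 1 : C_*(\Lambda) \to C_*(\Lambda)$ and then transporting everything across the isomorphism $\psi_*$ of Lemma~\ref{lem:cp mapping cone}. First I would write out~\eqref{eq:mapping cone sequence} with $A_* = B_* = C_*(\Lambda)$ and $M(f)_* = M(\alpha^{-1}-1)_*$, giving the infinite sequence
\[
\cdots \to H_r(\Lambda) \xrightarrow{\iota_*} H_r(M(\alpha^{-1}-1)_*) \xrightarrow{\pi_*} H_{r-1}(\Lambda) \xrightarrow{(\alpha^{-1}-1)_*} H_{r-1}(\Lambda) \to \cdots.
\]
Lemma~\ref{lem:cp mapping cone} identifies $H_r(M(\alpha^{-1}-1)_*)$ with $H_r(\Lambda \times_\alpha \ZZ)$ via $\psi_*$, so replacing these terms and composing the connecting maps with $\psi_*$ yields the desired maps labelled $\iota_*$ and $\pi_*$ in the statement.

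Two bookkeeping points then have to be addressed. The connecting map in~\eqref{eq:mapping cone sequence} is $f_* = (\alpha^{-1}-1)_* = \alpha^{-1}_* - 1$, whereas the theorem states the map as $1 - \alpha_*$. Since $\alpha$ is an automorphism, $\alpha_*$ is an isomorphism on each $H_r(\Lambda)$ with inverse $(\alpha^{-1})_* = \alpha_*^{-1}$; I would note that $\alpha_*^{-1} - 1 = -\alpha_*^{-1}(1 - \alpha_*)$, so the kernel and image of $\alpha^{-1}_* - 1$ coincide exactly with those of $1 - \alpha_*$. Because exactness of a sequence depends only on kernels and images, I may replace $\alpha^{-1}_* - 1$ by $1 - \alpha_*$ throughout without disturbing exactness; the intervening isomorphism $-\alpha_*^{-1}$ can be absorbed into the adjacent maps. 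The second point is truncation: by Remark~\ref{rmk:trivial}, $H_r(\Lambda) = 0$ for $r > k$ and $H_r(\Lambda \times_\alpha \ZZ) = 0$ for $r > k+1$, so the doubly-infinite sequence collapses to the finite one displayed in the theorem. I would verify the two endpoints explicitly: at the top, $H_{k+1}(\Lambda) = 0$ forces the leading map into $H_{k+1}(\Lambda \times_\alpha \ZZ)$ to come from a zero group, making $\pi_*$ injective there; at the bottom, $H_{-1}(\Lambda) = 0$ forces $\iota_*$ onto $H_0(\Lambda \times_\alpha \ZZ)$ to be surjective.

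The naturality and the fact that $\iota_*, \pi_*$ are precisely the maps induced by the inclusion $\iota$ and projection $\pi$ of the mapping cone follow directly from the corresponding statements in~\cite[Proposition~4.3]{MacLane}, together with the explicit formulas for $\psi$ in Lemma~\ref{lem:cp mapping cone}. I do not expect a genuine obstacle here: the entire argument is a formal assembly of~\eqref{eq:mapping cone sequence}, Lemma~\ref{lem:cp mapping cone}, and Remark~\ref{rmk:trivial}. The only step requiring care is the sign/variance reconciliation that turns $\alpha^{-1}_* - 1$ into $1 - \alpha_*$, and confirming that this substitution genuinely preserves exactness rather than merely agreeing on objects; making the factorization $\alpha^{-1}_* - 1 = -\alpha_*^{-1}(1-\alpha_*)$ explicit is what settles this cleanly.
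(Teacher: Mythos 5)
Your proposal is essentially the paper's own proof: apply the mapping-cone long exact sequence \eqref{eq:mapping cone sequence} to $f=\alpha^{-1}-1$, identify the cone's homology with $H_*(\Lambda\times_\alpha\ZZ)$ via Lemma~\ref{lem:cp mapping cone}, truncate using Remark~\ref{rmk:trivial}, and replace $\alpha^{-1}_*-1$ by $1-\alpha_*$. Two small points: your factorization should read $\alpha^{-1}_*-1=\alpha_*^{-1}(1-\alpha_*)$ (no leading minus sign, though this is harmless), and the equality of \emph{images} (as opposed to kernels) is most cleanly justified as the paper does --- $\alpha_*$ commutes with $\alpha^{-1}_*-1$, so $\image(\alpha^{-1}_*-1)$ is $\alpha_*$-invariant and hence equals $\image(\alpha_*(\alpha^{-1}_*-1))=\image(1-\alpha_*)$ --- which avoids having to modify the adjacent maps $\iota_*$ and $\pi_*$.
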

\begin{proof}
The long exact sequence~\eqref{eq:mapping cone sequence} applied with $f = \alpha^{-1} - 1$
together with Lemma~\ref{lem:cp mapping cone} (and identifying
$H_*(\Lambda \times_\alpha \ZZ) \cong H_*(M(\alpha^{-1} - 1)_*)$) gives a long exact sequence
\[\begin{split}
    0 \longrightarrow H_{k+1} ( \Lambda \times_\alpha \ZZ  ) &\stackrel{\pi_*}{\longrightarrow}
        H_k ( \Lambda  ) \xrightarrow{\alpha^{-1}_* - 1} H_k ( \Lambda ) \stackrel{\iota_*}{\longrightarrow}
        H_k(\Lambda \times_\alpha \ZZ) \longrightarrow \cdots \\
    \cdots &\longrightarrow H_1 ( \Lambda \times_\alpha \ZZ ) \stackrel{\pi_*}{\longrightarrow} H_0(\Lambda)
        \xrightarrow{\alpha^{-1}_* - 1} H_0(\Lambda) \stackrel{\iota_*}{\longrightarrow}
        H_0(\Lambda \times_\alpha \ZZ) \longrightarrow 0.
\end{split}\]
Since $\alpha_*$ is an automorphism of $H_r(\Lambda)$ which commutes with $\alpha^{-1}_* - 1$, both
$\ker(\alpha^{-1}_* - 1)$ and $\image(\alpha^{-1}_* - 1)$ are $\alpha_*$-invariant. Therefore
\[
\ker(\alpha^{-1}_* - 1) = \ker(\alpha_*(\alpha^{-1}_* - 1)) = \ker(1 - \alpha_*)
\]
and similarly, $\image(\alpha^{-1}_* - 1) = \image(1 -\alpha_*)$.
\end{proof}

\begin{rmk}
Theorem~\ref{thm:PV} may also be proved using the topological realizations, introduced in
\cite{kkqs} (see also Section~\ref{sec:geometric}), of $\Lambda$ and $\Lambda \times_\alpha \ZZ$.
To see how, recall from \cite[Lemma~2.23]{kkqs} that $\alpha$ induces a homeomorphism
$\tilde{\alpha}$ of the topological realisation $X_\Lambda$ of $\Lambda$, and that $X_{\Lambda
\times_\alpha \ZZ}$ is homeomorphic to the mapping torus $M(\tilde{\alpha})$.  Combining this with
Theorem~\ref{thm:homologies agree} and the long exact sequence of
\cite[Example~2.48]{Hatcher:AlgTop02} yields the result.
\end{rmk}

\section{Examples}\label{sec:examples}

In this section we present some examples. We describe them using skeletons, so we first indicate
what this means. Our examples are all $2$-graphs (since there are already a number of interesting
examples in this case), so we restrict ourselves to a discussion of skeletons for $2$-graphs.

A $2$-coloured graph is a directed graph $E$ together with a map $c : E^1 \to \{1,2\}$. A complete
collection of squares in $E$ is a collection of relations of the form $ef \sim f'e'$ where $ef,
f'e' \in E^2$ with $c(e) = c(e') = 1$ and $c(f) = c(f') = 2$ such that each bi-coloured path of
length two appears in exactly one such relation%
\footnote{Strictly speaking, in \cite{HazelwoodRaeburnEtAl:xx11}, a complete collection of squares is defined to be a collection
$\Cc$ of coloured-graph morphisms from model coloured graphs $E_{k,e_i + e_j}$ into $\Lambda$, and the relation $\sim$ is
defined by $ef \sim f'e'$ if and only if the two paths traverse a common element of $\Cc$. But we can recover the
collection of coloured-graph morphisms as in \cite{HazelwoodRaeburnEtAl:xx11} from the relation $\sim$, so the two formalisms are equivalent.}%
. It follows from \cite[Section~6]{KP2000} (see also \cite[Theorems
4.4~and~4.5]{HazelwoodRaeburnEtAl:xx11}) that each pair consisting of a $2$-coloured graph and a
complete collection of pairs uniquely determines a $2$-graph, and also that each $2$-graph arises from such
a pair $(E_\Lambda, \Cc_\Lambda)$. It is standard to refer to the equalities $ef = f'e'$ in $\Lambda$
determined by the squares $ef \sim f'e'$ in $\Cc$ as the \emph{factorisation rules.} We refer to $E$ as
the \emph{skeleton} of $\Lambda$.

In our diagrams, edges of colour $1$ are blue and solid, and edges of colour $2$ are red and
dashed.

Our first example is a $2$-graph whose first homology group contains torsion. Combined with Example~\ref{ex:depends on
factorisations}, it also demonstrates that the homology of a $k$-graph depends on the factorisation rules and not just on the
skeleton.

\begin{example}\label{ex:torsion}
Fix $n > 1$ and consider the $1$-graph $\Lambda$ with skeleton
\[
\begin{tikzpicture}[scale=2]
    \node[circle,inner sep=0.5pt] (v) at (0,0) {$u$};
    \node[circle,inner sep=0.5pt] (w) at (1,0) {$v$};
    \draw[-latex, blue] (w) .. controls +(-0.5,-0.3) .. (v) node[pos=0.5,anchor=north,inner sep=1pt, black]
    {\small$f_0$};
    \draw[-latex, blue] (w) .. controls +(-0.5,+0.3) .. (v) node[pos=0.5,anchor=south,inner sep=1pt, black]
    {\small$f_{n-1}$};
    \node at (0.5,0.05) {$\vdots$};
\end{tikzpicture}
\]

Define $\alpha \in \Aut(\Lambda)$ by $\alpha(f_i) = f_{i+1}$, where addition is modulo $n$ (so $\alpha$ fixes vertices).
Then $\Lambda \times_\alpha \ZZ$ (see page~\pageref{pg:cpgraph}) is the 2-graph with skeleton
\[
\begin{tikzpicture}[scale=2]
    \node[circle,inner sep=0.5pt] (v) at (0,0) {$u$};
    \node[circle,inner sep=0.5pt] (w) at (1,0) {$v$};
    \draw[-latex, blue] (w) .. controls +(-0.5,-0.3) .. (v) node[pos=0.5,anchor=north,inner sep=1pt, black] {\small$(f_0,0)$};
    \draw[-latex, blue] (w) .. controls +(-0.5,+0.3) .. (v) node[pos=0.5,anchor=south,inner sep=1pt, black] {\small$(f_{n-1},0)$};
    \node at (0.5,0.05) {$\vdots$};
    \draw[-latex,red,dashed] (v) .. controls +(-0.6,0.5) and +(-0.6,-0.5) .. (v) node[pos=0.5,anchor=east,inner sep=1pt, black] {\small$(u,1)$};
    \draw[-latex,red, dashed] (w) .. controls +(0.6,0.5) and +(0.6,-0.5) .. (w) node[pos=0.5,anchor=west,inner sep=1pt, black] {\small$(v,1)$};
\end{tikzpicture}
\]
and factorisation rules $(f_i , 0 ) (v,1) = (u,1) ( f_{i+1} , 0)$ for $i =0, \ldots, n-1$, where
addition is modulo $n$.

We claim that
\[
H_0(\Lambda \times_\alpha \ZZ) \cong \ZZ,\quad
    H_1(\Lambda \times_\alpha \ZZ) \cong \ZZ \oplus \ZZ/n\ZZ,\quad\text{ and }\quad
    H_2(\Lambda \times_\alpha \ZZ) = \{0\}.
\]
By Proposition~\ref{prop:H0ofconn} we have $H_0(\Lambda \times_\alpha \ZZ) \cong \ZZ$ and $H_0(\Lambda) \cong
\ZZ$. Since $\alpha$ fixes vertices it follows that $\alpha_* : H_0 (\Lambda) \to H_0 (\Lambda)$ is the identity map. Hence
$\ker(1 - \alpha_*) = H_0(\Lambda) \cong \ZZ$.

We next calculate $H_1(\Lambda)$. Since $C_2(\Lambda) = \{0\}$, we have $H_1(\Lambda) = \ker(\partial_1)$. Since
$\partial_1( f_i) = u -v$ for all $0 \le i \le n-1$, and since $C_1(\Lambda) = \ZZ \{f_0, \dots , f_{n-1}\}$, we have
\begin{equation}\label{eq:old basis for H_1}
\{f_i - f_{i+1} : 0 \le i \le n-2\}\text{ is a basis for the $\ZZ$-module $H_1(\Lambda)$.}
\end{equation}
Let $b_i := f_i - f_{i+1}$ for $0 \le i \le n-2$
then $\alpha_*(b_i) = b_{i+1}$ for $0 \le i < n-2$, and
\[
\alpha_* ( b_{n-2} ) = f_{n-1} - f_0 = -\sum_{i=0}^{n-2} b_i.
\]
Hence, regarded as an endomorphism of $\ZZ^{n-1}$, the map $1 - \alpha_*$ is implemented
by the $(n-1) \times (n-1)$ matrix
\[
\left(\begin{array}{rrrcrr}
1 & 0 & 0 & \cdots & 0 & 1\\
-1  & 1 & 0 & \cdots & 0 & 1 \\
0 & -1 & 1 &\cdots & 0 & 1\\
\vdots\; &\vdots\;&\vdots\;&\ddots &\vdots\;&\vdots\; \\
0&0&0&\cdots&1&1 \\
0&0&0&\cdots&-1&2
\end{array}\right) .
\]
Thus $\image(1 - \alpha_*)$ is spanned by the elements $b_i - b_{i+1}$ for $0 \le i \le n-3$ together with the element
$b_{n-2} + \sum^{n-2}_{i=0} b_i$. Using this one checks that
\begin{equation}\label{eq:basis for image}
    \{b_0 - b_{n-2}, b_1 - b_{n-2}, \dots, b_{n-3} - b_{n-2}, nb_{n-2}\}\text{ is a basis for $\image(1 - \alpha_*)$.}
\end{equation}
From \eqref{eq:old basis for H_1} one sees that
\begin{equation}\label{eq:basis for H_1}
     \{b_0 - b_{n-2}, b_1 - b_{n-2}, \dots, b_{n-3} - b_{n-2}, b_{n-2}\}\text{ is a basis for $H_1(\Lambda)$.}
\end{equation}

In particular, $\rank(\image(1 - \alpha_*)) = \rank(H_1(\Lambda))$, forcing $\ker(1-\alpha_*) =
\{0\}$. Moreover, combining~\eqref{eq:basis for H_1} with~\eqref{eq:basis for image} shows that
$\coker(1 - \alpha_*) \cong \ZZ/n\ZZ$. Thus Theorem~\ref{thm:PV} implies that $H_2(\Lambda
\times_\alpha \ZZ) = \{0\} = \ker(1-\alpha_*) = \{0\}$, and that $H_1(\Lambda \times_\alpha \ZZ)$
is an extension of $\ZZ$ by $\ZZ/n\ZZ$ and hence is equal to $\ZZ \oplus
(\ZZ/n\ZZ)$.  In particular, for $n=2$, the graph $\Lambda \times_\alpha \ZZ$ has the
same homology as the Klein bottle.
\end{example}

\begin{example}\label{ex:depends on factorisations}
Let $T_1$ be the $1$-graph with a single vertex and a single edge as in Example~\ref{ex:kgraphs}(\ref{it:Tsubk}), and let
$\Lambda$ and $\Gamma = \Lambda \times_\alpha \ZZ$ be as in Example~\ref{ex:torsion} with $n=2$. Then $\Lambda
\times T_1$ has the same skeleton as $\Gamma$. To compute the homology of $\Lambda \times T_1$, we can use the
K\"unneth theorem (Theorem~\ref{thm:kunneth}): each of $T_1$ and $\Lambda$ consists of a single simple closed undirected
path, so it is routine to verify that $H_i(T_1) = H_i(\Lambda) = \ZZ$ for each of $i = 0, 1$. Hence $H_i(\Lambda \times
T_1) = \ZZ^{\binom{2}{i}}$ for all $i$. So the homology of $\Lambda \times T_1$ is the same as that of the $2$-torus (see
Example~\ref{ex:torus}), and in particular is not equal to that of $\Gamma$, even though they have the same skeleton.
\end{example}

We next describe a suite of examples of $2$-graphs whose homology mirrors that of the sphere, the torus, the Klein bottle and the
projective plane. We have presented examples matching the Klein bottle and the torus previously (see Examples
\ref{ex:torsion}~and~\ref{ex:torus}), but we provide presentations here which suggest standard planar diagrams for these four
spaces.

\begin{rmk}\label{rmk:planar diagrams}
For a number of the following examples, we give a non-standard presentation of the skeleton and factorisation rules. Specifically for
Examples \ref{ex:sphere}--\ref{ex:Klein}, we present a commuting diagram (in the category $\Lambda$) which includes all
$2$-cubes as commuting squares. These diagrams are not the same as the skeletons because they involve some repeated vertices and
edges. We present our examples this way to suggest planar diagrams for their topological realisations (see
Section~\ref{sec:geometric}); indeed, we will sometimes refer to these commuting diagrams, very imprecisely, as planar diagrams for
the associated $2$-graphs.

When using this presentation of a $2$-graph, one must check that the collection of squares specified in the diagram is complete:
since vertices may be repeated in a planar diagram, it is possible that there are some bi-coloured paths in the skeleton which do not
appear as the sides of a square in the diagram, and in this case, the diagram may not completely specify a $2$-graph, and is in any
case not a planar diagram for the $2$-graph in the sense just discussed.
\end{rmk}

\begin{example}\label{ex:sphere}
Let $\Lambda$ be the $2$-graph described by the following planar diagram (see
Remark~\ref{rmk:planar diagrams}).
\[
\begin{tikzpicture}[scale=1.5]
    \node[inner sep=1pt, circle] (sw) at (-1,-1) {$w$};
    \node[inner sep=1pt, circle] (w) at (-1,0) {$z$};
    \node[inner sep=1pt, circle] (nw) at (-1,1) {$w$};
    \node[inner sep=1pt, circle] (s) at (0,-1) {$u$};
    \node[inner sep=1pt, circle] (m) at (0,0) {$x$};
    \node[inner sep=1pt, circle] (n) at (0,1) {$v$};
    \node[inner sep=1pt, circle] (se) at (1,-1) {$w$};
    \node[inner sep=1pt, circle] (e) at (1,0) {$y$};
    \node[inner sep=1pt, circle] (ne) at (1,1) {$w$};
    \draw[-latex, blue] (s)--(sw) node[pos=0.5, above] {\color{black}$a$};
    \draw[-latex, blue] (s)--(se) node[pos=0.5, above] {\color{black}$a$};
    \draw[-latex, blue] (m)--(w) node[pos=0.5, above] {\color{black}$d$};
    \draw[-latex, blue] (m)--(e) node[pos=0.5, above] {\color{black}$c$};
    \draw[-latex, blue] (n)--(nw) node[pos=0.5, above] {\color{black}$b$};
    \draw[-latex, blue] (n)--(ne) node[pos=0.5, above] {\color{black}$b$};
    \draw[-latex, red, dashed] (sw)--(w) node[pos=0.5, right] {\color{black}$h$};
    \draw[-latex, red, dashed] (nw)--(w) node[pos=0.5, right] {\color{black}$h$};
    \draw[-latex, red, dashed] (s)--(m) node[pos=0.5, right] {\color{black}$e$};
    \draw[-latex, red, dashed] (n)--(m) node[pos=0.5, right] {\color{black}$f$};
    \draw[-latex, red, dashed] (se)--(e) node[pos=0.5, right] {\color{black}$g$};
    \draw[-latex, red, dashed] (ne)--(e) node[pos=0.5, right] {\color{black}$g$};
    \node at (-0.42, -0.42) {$\beta$};
    \node at (-0.42, 0.58) {$\gamma$};
    \node at (0.58, -0.42) {$\alpha$};
    \node at (0.58, 0.58) {$\delta$};
\end{tikzpicture}
\]
The skeleton of $\Lambda$ is pictured in~\eqref{fig:sphere}. The Greek letters in the centres of the commuting squares in the
above diagram are the morphisms of degree $(1,1)$. So $\alpha = ce = ga$, $\beta = de = ha$, etc.

Since $\Lambda$  is connected, $H_0 ( \Lambda ) \cong \ZZ$ by Proposition~\ref{prop:H0ofconn}. We have
$\partial_{2}(\alpha - \beta + \gamma - \delta) = 0$ by a straightforward calculation and one can check that $\partial_2(n_1
\alpha + n_2\beta + n_3\gamma) = 0$ implies $n_1 = n_2 = n_3 = 0$, so $H_2(\Lambda) = \ker(\partial_2) \cong \ZZ$.
Moreover, $\partial_2(C_2(\Lambda))$ is spanned by $\partial_2(\alpha)$, $\partial_2(\beta)$ and $\partial_2(\gamma)$.

One checks that the set $\{\partial_2(\alpha), \partial_2(\beta), \partial_2(\gamma), d, e ,f ,g , h\}$ forms a basis for
$C_1(\Lambda)$. So $C_1(\Lambda) = \partial_2(C_2(\Lambda)) \oplus \ZZ\{d, e, f, g, h\}$. Since $H_0(\Lambda) =
\ZZ$ and $C_0(\Lambda)$ has rank 6, the image of $\partial_1$ has rank $5$. It follows that $H_1(\Lambda) = \{0\}$.
Hence $\Lambda$ has the same homology as the sphere $S^2$. If we draw its skeleton as follows, the resemblance between
$\Lambda$ and a combinatorial sphere is striking.
\begin{equation}\label{fig:sphere}
\parbox[c]{0.9\textwidth}{\hfill\begin{tikzpicture}[scale=2]
    \node[inner sep= 1pt] (100) at (1,0,0) {$u$};
    \node[inner sep= 1pt] (-100) at (-1,0,0) {$v$};
    \node[inner sep= 1pt] (010) at (0,1,0) {$w$};
    \node[inner sep= 1pt] (0-10) at (0,-1,0) {$x$};
    \node[inner sep= 1pt] (001) at (0,0,1) {$y$};
    \node[inner sep= 1pt] (00-1) at (0,0,-1) {$z$};
    \draw[-latex, blue] (100) .. controls +(0,0.6,0) and +(0.6,0,0) .. (010) node[pos=0.5, anchor=south west] {\color{black}$a$};
    \draw[-latex, red, dashed] (100) .. controls +(0,-0.6,0) and +(0.6,0,0) .. (0-10) node[pos=0.5, anchor=north west] {\color{black}$e$};
    \draw[-latex, blue] (-100) .. controls +(0,0.6,0) and +(-0.6,0,0) .. (010) node[pos=0.5, anchor=south east] {\color{black}$b$};
    \draw[-latex, red, dashed] (-100) .. controls +(0,-0.6,0) and +(-0.6,0,0) .. (0-10) node[pos=0.5, anchor=north east] {\color{black}$f$};
    \draw[-latex, red, dashed] (010) .. controls +(0,0,0.6) and +(0,0.6,0) .. (001) node[pos=0.5, anchor=east] {\color{black}$g$};
    \draw[-latex, red, dashed] (010) .. controls +(0,0,-0.6) and +(0,0.6,0) .. (00-1) node[pos=0.8, anchor=east] {\color{black}$h$};
    \draw[-latex, blue] (0-10) .. controls +(0,0,0.6) and +(0,-0.6,0) .. (001) node[pos=0.85, anchor=west] {\color{black}$c$};
    \draw[-latex, blue] (0-10) .. controls +(0,0,-0.6) and +(0,-0.6,0) .. (00-1) node[pos=0.5, anchor=south east] {\color{black}$d$};
\end{tikzpicture}\hfill\ }
\end{equation}
\end{example}

\begin{example}\label{ex:four-square torus}
Consider the $2$-graph $\Sigma$ with planar diagram (see Remark~\ref{rmk:planar diagrams}) on the left
and skeleton on the right in the following diagram.
\[
\begin{tikzpicture}[scale=1.5]
    \node at (-2,0) {};
    \node[inner sep=0.5pt, circle] (sw) at (-1,-1) {$x$};
    \node[inner sep=0.5pt, circle] (w) at (-1,0) {$v$};
    \node[inner sep=0.5pt, circle] (nw) at (-1,1) {$x$};
    \node[inner sep=0.5pt, circle] (s) at (0,-1) {$w$};
    \node[inner sep=0.5pt, circle] (m) at (0,0) {$u$};
    \node[inner sep=0.5pt, circle] (n) at (0,1) {$w$};
    \node[inner sep=0.5pt, circle] (se) at (1,-1) {$x$};
    \node[inner sep=0.5pt, circle] (e) at (1,0) {$v$};
    \node[inner sep=0.5pt, circle] (ne) at (1,1) {$x$};
    \draw[-latex, blue] (sw)--(s) node[pos=0.5, above, black] {$b$};
    \draw[-latex, blue] (se)--(s) node[pos=0.5, above, black] {$a$};
    \draw[-latex, blue] (w)--(m) node[pos=0.5, above, black] {$c$};
    \draw[-latex, blue] (e)--(m) node[pos=0.5, above, black] {$d$};
    \draw[-latex, blue] (nw)--(n) node[pos=0.5, above, black] {$b$};
    \draw[-latex, blue] (ne)--(n) node[pos=0.5, above, black] {$a$};
    \draw[-latex, red, dashed] (sw)--(w) node[pos=0.5, right, black] {$f$};
    \draw[-latex, red, dashed] (nw)--(w) node[pos=0.5, right, black] {$e$};
    \draw[-latex, red, dashed] (s)--(m) node[pos=0.5, right, black] {$h$};
    \draw[-latex, red, dashed] (n)--(m) node[pos=0.5, right, black] {$g$};
    \draw[-latex, red, dashed] (se)--(e) node[pos=0.5, right, black] {$f$};
    \draw[-latex, red, dashed] (ne)--(e) node[pos=0.5, right, black] {$e$};
\begin{scope}[xshift=4.5cm]
    \node[inner sep= 1pt] (002) at (0,0,2) {$u$};
    \node[inner sep= 1pt] (001) at (0,0,1.2) {$w$};
    \node[inner sep= 1pt] (00-1) at (0,0,-1.2) {$x$};
    \node[inner sep= 1pt] (00-2) at (0,0,-2) {$v$};
    \draw[-latex, red, dashed] (001) .. controls +(0,0.8,0) and +(0,0.8,0) .. (002) node[pos=0.25, anchor=west] {\color{black}$g$};
    \draw[-latex, red, dashed] (001) .. controls +(0,-0.8,0) and +(0,-0.8,0) .. (002) node[pos=0.5, anchor=north] {\color{black}$h$};
    \draw[-latex, red, dashed] (00-1) .. controls +(0,0.8,0) and +(0,0.8,0) .. (00-2) node[pos=0.5, anchor=south] {\color{black}$e$};
    \draw[-latex, red, dashed] (00-1) .. controls +(0,-0.8,0) and +(0,-0.8,0) .. (00-2) node[pos=0.25, anchor=east] {\color{black}$f$};
    \draw[-latex, blue] (00-1) .. controls +(1,0,0) and +(1,0,0) .. (001) node[pos=0.6, anchor=west] {\color{black}$a$};
    \draw[-latex, blue] (00-1) .. controls +(-1,0,0) and +(-1,0,0) .. (001) node[pos=0.4, anchor=east] {\color{black}$b$};
    \draw[-latex, blue] (00-2) .. controls +(2,0,0) and +(2,0,0) .. (002) node[pos=0.6, anchor=west] {\color{black}$c$};
    \draw[-latex, blue] (00-2) .. controls +(-2,0,0) and +(-2,0,0) .. (002) node[pos=0.4, anchor=east] {\color{black}$d$};
\end{scope}
\end{tikzpicture}
\]
Let $\Lambda$ be the $1$-graph with two vertices connected by two parallel edges used in
Example~\ref{ex:depends on factorisations}; we observed in the same example that the homology of
$\Lambda$ is that of the circle. Then $\Sigma$ is isomorphic to $\Lambda \times \Lambda$, so by the
K\"unneth theorem it has the homology of the 2-torus as in Example~\ref{ex:depends on
factorisations}.
\end{example}

\begin{example}\label{ex:proj plane}
We thank Mike Whittaker for his contributions to the construction and analysis of  this example.

Let $\Lambda$ be the $2$-graph with planar diagram (see Remark~\ref{rmk:planar diagrams}) on the left and skeleton on the
right in the following diagram. As above, the Greek letters in the centres of squares denote the morphisms in $\Lambda^{(1,1)}$
--- so $\alpha = ga = ce$ etc.
\[
\begin{tikzpicture}
\begin{scope}[scale=1.5]
    \node[inner sep=0.5pt, circle] (sw) at (-1,-1) {$x$};
    \node[inner sep=0.5pt, circle] (w) at (-1,0) {$v$};
    \node[inner sep=0.5pt, circle] (nw) at (-1,1) {$y$};
    \node[inner sep=0.5pt, circle] (s) at (0,-1) {$w$};
    \node[inner sep=0.5pt, circle] (m) at (0,0) {$u$};
    \node[inner sep=0.5pt, circle] (n) at (0,1) {$w$};
    \node[inner sep=0.5pt, circle] (se) at (1,-1) {$y$};
    \node[inner sep=0.5pt, circle] (e) at (1,0) {$v$};
    \node[inner sep=0.5pt, circle] (ne) at (1,1) {$x$};
    \draw[-latex, blue] (sw)--(s) node[pos=0.5, above, black] {$b$};
    \draw[-latex, blue] (se)--(s) node[pos=0.5, above, black] {$a$};
    \draw[-latex, blue] (w)--(m) node[pos=0.5, above, black] {$c$};
    \draw[-latex, blue] (e)--(m) node[pos=0.5, above, black] {$d$};
    \draw[-latex, blue] (nw)--(n) node[pos=0.5, above, black] {$a$};
    \draw[-latex, blue] (ne)--(n) node[pos=0.5, above, black] {$b$};
    \draw[-latex, red, dashed] (sw)--(w) node[pos=0.5, right, black] {$f$};
    \draw[-latex, red, dashed] (nw)--(w) node[pos=0.5, right, black] {$e$};
    \draw[-latex, red, dashed] (s)--(m) node[pos=0.5, right, black] {$h$};
    \draw[-latex, red, dashed] (n)--(m) node[pos=0.5, right, black] {$g$};
    \draw[-latex, red, dashed] (se)--(e) node[pos=0.5, right, black] {$e$};
    \draw[-latex, red, dashed] (ne)--(e) node[pos=0.5, right, black] {$f$};
    \node at (-0.5, -0.5) {$\gamma$};
    \node at (-0.5, 0.5) {$\alpha$};
    \node at (0.5, -0.5) {$\beta$};
    \node at (0.5, 0.5) {$\delta$};
\end{scope}
\begin{scope}[scale=2, xshift=3cm]
    \node[inner sep=.5pt, circle] (u) at (0,0) {$u$};
    \node[inner sep=.5pt, circle] (v) at (0,1) {$v$};
    \node[inner sep=.5pt, circle] (w) at (0,-1) {$w$};
    \node[inner sep=.5pt, circle] (x) at (1,0) {$x$};
    \node[inner sep=.5pt, circle] (y) at (-1,0) {$y$};
    \draw[-latex, blue] (v) .. controls +(-0.2,-0.5) .. (u) node[pos=0.5, left, black] {$c$};
    \draw[-latex, blue] (v) .. controls +(0.2,-0.5) .. (u) node[pos=0.5, right, black] {$d$};
    \draw[-latex, red, dashed] (w) .. controls +(-0.2,0.5) .. (u) node[pos=0.5, left, black] {$g$};
    \draw[-latex, red, dashed] (w) .. controls +(0.2,0.5) .. (u) node[pos=0.5, right, black] {$h$};
    \draw[-latex, blue] (x)--(w) node[pos=0.5, anchor=north west, black] {$b$};
    \draw[-latex, red, dashed] (x)--(v) node[pos=0.5, anchor=south west, black] {$f$};
    \draw[-latex, blue] (y)--(w) node[pos=0.5, anchor=north east, black] {$a$};
    \draw[-latex, red, dashed] (y)--(v) node[pos=0.5, anchor=south east, black] {$e$};
\end{scope}
\end{tikzpicture}
\]

We claim that $H_0(\Lambda) \cong \ZZ$, $H_1(\Lambda) \cong \ZZ/2\ZZ$ and $H_2(\Lambda) = \{0\}$.
Indeed, $C_0(\Lambda) = \ZZ\{u,v,w,x,y\}$, $C_1(\Lambda) = \ZZ\{a,b,c,d,e,f,g,h\}$ and $C_2(\Lambda) =
\ZZ\{\alpha,\beta,\gamma,\delta\}$. Since $\Lambda$ is connected, $H_0(\Lambda)  \cong \ZZ$ which
implies that $\partial_1(C_1(\Lambda))$ has rank 4. Since $\rank C_1(\Lambda) = 8$,
$\rank\ker(\partial_1) = 4$ also. If $\partial_2(n_1\alpha + n_2\beta + n_3\gamma + n_4\delta) =
0$, then consideration of the coefficients of $c$ and $h$ forces $n_1 = -n_3 = n_2$, and then that
the coefficient of $a$ is zero forces $n_1 = n_2 = 0$, and hence $n_3 = 0$ also. Now considering
the coefficient of $d$ shows that $n_4 = - n_2 = 0$. So $\partial_2$ is injective, forcing
$H_2(\Lambda) = \{0\}$, and also that $\rank \partial_2(C_2(\Lambda)) = 4$.  We observed above that
$\rank \ker(\partial_1) = 4$; hence
\[
\rank(H_1(\Lambda)) = \rank(\ker(\partial_1)) - \rank(\partial_2(C_2(\Lambda))) = 0.
\]
It is routine to check that $\{c-d, g-h, c + f - b - h, d + e - a - h\}$ is a basis for $\ker(\partial_1)$. To determine the image of
$\partial_2$, first note that $c + f - b - h = \partial_2(\gamma)$ and $d + e - a - h = \partial_2(\beta)$. Moreover $(c - d) +
(g - h)$ is the image of $\gamma - \delta$, which implies that $H_1(\Lambda)$ is generated by the class of $c - d$. Finally,
$2(c-d) =
\partial_2(\alpha - \beta + \gamma - \delta)$, and since $\{\alpha,\beta,\gamma, \alpha - \beta +
\gamma - \delta)$ is a basis for $C_2(\Lambda)$, it follows that $H_1(\Lambda)  \cong \ZZ/2\ZZ$ as
required.

These homology groups are the same as those of the projective plane.
\end{example}

\begin{example}\label{ex:Klein}
Consider the $2$-graph $\Lambda$ with planar diagram (see Remark~\ref{rmk:planar diagrams}) on the
left and skeleton on the right in the following diagram.
\[
\begin{tikzpicture}[scale=1.5]
    \node[inner sep=0.5pt, circle] (sw) at (-1,-1) {$x$};
    \node[inner sep=0.5pt, circle] (w) at (-1,0) {$v$};
    \node[inner sep=0.5pt, circle] (nw) at (-1,1) {$x$};
    \node[inner sep=0.5pt, circle] (s) at (0,-1) {$w$};
    \node[inner sep=0.5pt, circle] (m) at (0,0) {$u$};
    \node[inner sep=0.5pt, circle] (n) at (0,1) {$w$};
    \node[inner sep=0.5pt, circle] (se) at (1,-1) {$x$};
    \node[inner sep=0.5pt, circle] (e) at (1,0) {$v$};
    \node[inner sep=0.5pt, circle] (ne) at (1,1) {$x$};
    \draw[-latex, blue] (sw)--(s) node[pos=0.5, above, black] {$b$};
    \draw[-latex, blue] (se)--(s) node[pos=0.5, above, black] {$a$};
    \draw[-latex, blue] (w)--(m) node[pos=0.5, above, black] {$c$};
    \draw[-latex, blue] (e)--(m) node[pos=0.5, above, black] {$d$};
    \draw[-latex, blue] (nw)--(n) node[pos=0.5, above, black] {$b$};
    \draw[-latex, blue] (ne)--(n) node[pos=0.5, above, black] {$a$};
    \draw[-latex, red, dashed] (sw)--(w) node[pos=0.5, right, black] {$f$};
    \draw[-latex, red, dashed] (nw)--(w) node[pos=0.5, right, black] {$e$};
    \draw[-latex, red, dashed] (s)--(m) node[pos=0.5, right, black] {$h$};
    \draw[-latex, red, dashed] (n)--(m) node[pos=0.5, right, black] {$g$};
    \draw[-latex, red, dashed] (se)--(e) node[pos=0.5, right, black] {$e$};
    \draw[-latex, red, dashed] (ne)--(e) node[pos=0.5, right, black] {$f$};
    \node at (-0.5, -0.5) {$\gamma$};
    \node at (-0.5, 0.5) {$\alpha$};
    \node at (0.5, -0.5) {$\beta$};
    \node at (0.5, 0.5) {$\delta$};
\begin{scope}[xshift=3cm, yshift=-0.78cm, scale=1.5]
    \node[inner sep=0.5pt, circle] (u) at (0,0) {$u$};
    \node[inner sep=0.5pt, circle] (v) at (1,0) {$v$};
    \node[inner sep=0.5pt, circle] (w) at (0,1) {$w$};
    \node[inner sep=0.5pt, circle] (x) at (1,1) {$x$};
    \draw[blue,-latex,out=160, in=20] (v) to node[pos=0.5,above, black] {$c$} (u);
    \draw[blue,-latex,out=200, in=340] (v) to node[pos=0.5,below, black] {$d$} (u);
    \draw[blue,-latex,out=160, in=20] (x) to node[pos=0.5,above, black] {$a$} (w);
    \draw[blue,-latex,out=200, in=340] (x) to node[pos=0.5,below, black] {$b$} (w);
    \draw[red, dashed, -latex, out=290, in=70] (w) to node[pos=0.5,right, black] {$h$} (u);
    \draw[red, dashed, -latex, out=250, in=110] (w) to node[pos=0.5,left, black] {$g$} (u);
    \draw[red, dashed, -latex, out=290, in=70] (x) to node[pos=0.5,right, black] {$f$} (v);
    \draw[red, dashed, -latex, out=250, in=110] (x) to node[pos=0.5,left, black] {$e$} (v);
\end{scope}
\end{tikzpicture}
\]
One can check, by calculating with bare hands, that the homology of this $2$-graph is the same as
that of the $2$-graph $\Lambda \times_\alpha \ZZ$ of Example~\ref{ex:torsion} with $n = 2$; that
is, the same homology as the Klein bottle. Alternatively, one can deduce this from the topological
realisation (see Remark~\ref{rmk:top realisations} below).
\end{example}

\begin{example}\label{ex:projective covering}
In Example~\ref{ex:proj plane}, we realised the homology of the projective plane using a $2$-graph $\Lambda$. This suggests
that there ought to be a $2$-graph with the homology of the sphere carrying a free action of $\ZZ/2\ZZ$ such that the quotient
is isomorphic to $\Lambda$. By \cite[Remark~5.6]{KP2000} (see also \cite{PaskQuiggEtAl:JA05}), such a $2$-graph must be
a skew product of $\Lambda$ by a functor taking values in $\ZZ/2\ZZ$. Here we present such an example. There is a functor $c
: \Lambda \to \ZZ/2\ZZ$ determined by $c^{-1}(0) = \{b, c, g\}$ and $c^{-1}(1) = \{a, d, e, f, h\}$, and the skew-product
graph $\Lambda \times_c \ZZ/2\ZZ$ has the desired property. The visual intuition that has pervaded this section appears again:
one can check without too much difficulty that the skeleton of $\Lambda \times_c (\ZZ/2\ZZ)$ can be drawn as follows (we
have not labeled the edges since their labels can be deduced from the definition of the skew product and the labels of the vertices).
\[
\begin{tikzpicture}[scale=2]
    \node[inner sep= 1pt] (w1) at (xyz spherical cs:longitude=0,latitude=0,radius=1) {\small$(w,1)$};
    \node[inner sep= 1pt] (y0) at (xyz spherical cs:longitude=45,latitude=0,radius=1) {\small$(y,0)$};
    \node[inner sep= 1pt] (v1) at (xyz spherical cs:longitude=90,latitude=0,radius=1) {\small$(v,1)$};
    \node[inner sep= 1pt] (x0) at (xyz spherical cs:longitude=135,latitude=0,radius=1) {\small$(x,0)$};
    \node[inner sep= 1pt] (w0) at (xyz spherical cs:longitude=180,latitude=0,radius=1) {\small$(w,0)$};
    \node[inner sep= 1pt] (y1) at (xyz spherical cs:longitude=225,latitude=0,radius=1) {\small$(y,1)$};
    \node[inner sep= 1pt] (v0) at (xyz spherical cs:longitude=270,latitude=0,radius=1) {\small$(v,0)$};
    \node[inner sep= 1pt] (x1) at (xyz spherical cs:longitude=315,latitude=0,radius=1) {\small$(x,1)$};
    \node[inner sep= 1pt] (u1) at (xyz spherical cs:longitude=0,latitude=90,radius=1) {\small$(u,1)$};
    \node[inner sep= 1pt] (u0) at (xyz spherical cs:longitude=0,latitude=-90,radius=1) {\small$(u,0)$};
    \draw[-latex, blue, out=40, in=185] (x1.75) to (w1.west);
    \draw[-latex, red, dashed, out=235, in=90] (x1.220) to (v0.north);
    \draw[-latex, red, dashed, out=125, in=270] (y1.140) to (v0.south);
    \draw[-latex, blue, out=320, in=175] (y1.285) to (w0.west);
    \draw[-latex, blue, out=220, in=5] (x0.255) to (w0.east);
    \draw[-latex, red, dashed, out=55, in=270] (x0.40) to (v1.south);
    \draw[-latex, blue, out=140, in=355] (y0.105) to (w1.east);
    \draw[-latex, blue] (v0.60) .. controls +(0,0,-0.4) and +(-0.4,0,0) .. (u0.west);
    \draw[-latex, blue] (v0.225) .. controls +(0,0,0.3) and +(-0.3,0,0) .. (u1.west);
    \draw[-latex, blue] (v1.240) .. controls +(0,0,0.3) and +(0.4,0,0) .. (u1.east);
    \draw[-latex, blue] (v1.45) .. controls +(0,0,-0.2) and +(0.3,0,0) .. (u0.east);
    \draw[white, line width=2pt] (w1.255) .. controls +(0,0,0.4) and +(0,0.4,0) .. (u1.north);
    \draw[-latex, red, dashed] (w1.255) .. controls +(0,0,0.4) and +(0,0.4,0) .. (u1.north);
    \draw[-latex, red, dashed] (w0.240) .. controls +(0,0,0.4) and +(0,-0.4,0) .. (u1.south);
    \draw[-latex, red, dashed] (w0.45) .. controls +(0,0,-0.4) and +(0,-0.4,0) .. (u0.south);
    \draw[-latex, red, dashed] (w1.20) .. controls +(0,0,-0.2) and +(0,0.4,0) .. (u0.north);
% These things drawn last for masking
    \draw[white, line width=3pt, out=305, in=90] (y0.320) to (v1.north);
    \draw[-latex, red, dashed, out=305, in=90] (y0.320) to (v1.north);
\end{tikzpicture}
\]
This picture suggests how to view the action of $\ZZ/2\ZZ$ on the skew-product graph as the action of the antipodal map on
the sphere.

A similar situation arises for the Klein bottle and torus. Let $\Gamma$ denote the crossed product graph $\Lambda
\times_\alpha \ZZ$ of Example~\ref{ex:torsion} with $n = 2$, so that the homology of $\Gamma$ coincides with that of the
Klein bottle. Let $c : \Gamma \to \ZZ/2\ZZ$ be the functor $c(\lambda, n) = n \pmod{2}$. One can check that $\Gamma
\times_c (\ZZ/2\ZZ)$ is isomorphic to $\Lambda \times C_2$ where $\Lambda$ is the $1$-graph from
Example~\ref{ex:torsion} (with $n = 2$), and $C_2$ is the path category of the simple directed cycle of length $2$. In particular,
by the K\"unneth theorem, the homology of $\Gamma \times_c (\ZZ/2\ZZ)$ is isomorphic to that of the torus. So our 2-graph
representative $\Gamma$ of the Klein bottle can be realised as a quotient of a 2-graph representative of a torus by a free
$\ZZ/2\ZZ$ action.
\end{example}

\begin{rmk}\label{rmk:top realisations}
As observed in \cite{kkqs}, the topological realisations of the $2$-graphs of Examples
\ref{ex:sphere}--\ref{ex:Klein} (see Section~\ref{sec:geometric}) are indeed homeomorphic to each
of the sphere, the torus, the projective plane and the Klein bottle as their homology suggests. In
particular, Theorem~\ref{thm:homologies agree} below combined with the descriptions of their topological
realisations in \cite{kkqs} provide an alternative proof that these $2$-graphs have the homology we
have claimed for them.
\end{rmk}

\section{Connection with homology of topological spaces}\label{sec:geometric}

In this section, we show that the homology of the topological realisation $X_\Lambda$ of a $k$-graph as defined in \cite{kkqs}
agrees with the homology of $\Lambda$ defined in \S2. The corresponding fact for a cubical set was known already to Grandis: he
indicates at the end of  \cite[Section~1.8]{Grandis2005} that the result is well known, with a reference to
\cite{Munkres:elements} for the simplicial case. However, we have been unable to locate the details for cubical sets in the literature,
so we include a proof of our result based on that given for simplicial complexes by Hatcher \cite{Hatcher:AlgTop02}. We prove
in Appendix \ref{app:realisations} that the topological realisation of a $k$-graph we define here is homeomorphic to the
topological realisation $\mathcal{R} \widetilde{Q} (\Lambda)$ of the associated cubical set $\widetilde{Q} (\Lambda)$ (see
Appendix \ref{app:grandis}).

In \cite{kkqs}, the topological realisation of a $k$-graph $\Lambda$ is defined as follows. For $n \in \NN^k$, let $[0,n] := \{t
\in \RR^k : 0 \le t \le n\}$. For $t \in \RR^k$,  let $\lfloor t\rfloor$ be the element of $\ZZ^k$ such that $\lfloor t \rfloor_i
= \lfloor t_i \rfloor = \max\{ n \in \ZZ :  n \le t_i \}$ for all $i \le k$. Similarly, define $\lceil t \rceil$ by $\lceil t\rceil_i =
\min\{n \in \ZZ : t_i \le n\}$ for $i \le k$. Consider the following equivalence relation on $\bigsqcup_{\lambda \in
\Lambda} \big(\{\lambda\} \times [0, d(\lambda)]\big)$: for $\mu,\nu \in \Lambda$ and $s,t \in \RR^k$ with $0 \le s \le
d(\mu)$ and $0 \le t \le d(\nu)$, we define
\begin{equation}\label{eq:sim def}
(\mu,s) \sim (\nu,t)\quad \iff \quad s - \lfloor s \rfloor = t - \lfloor t \rfloor
                                \text{ and }\mu(\lfloor s \rfloor, \lceil s \rceil) = \nu(\lfloor t \rfloor, \lceil t \rceil).
\end{equation}
The topological realisation $X_\Lambda$ is the quotient space $\Big(\bigsqcup_{\lambda \in \Lambda} \{\lambda\} \times
[0, d(\lambda)]\Big) / \sim$.  As in \cite{kkqs} we let $[\lambda, t]$ denote the equivalence class of the point $(\lambda, t)$.

\begin{dfn}\label{def:phidef}
For $r \in \NN$, let $\bI^r$ denote the unit cube $[0,1]^r$ in $\RR^r$. Fix an $r$-cube $\lambda \in Q_r (\Lambda)$.
Express $d(\lambda) = e_{i_1} + \cdots + e_{i_r}$ where $i_1  < \cdots < i_r$. Let $\iota_\lambda: \bI^r \to
X_\Lambda$ denote the map $(t_1, \dots, t_r) \mapsto \big[\lambda, \sum^r_{m=1} t_m e_{i_m}\big]$. Then
$\Phi(\lambda) := \iota_\lambda$ defines a homomorphism $\Phi : C_r (\Lambda) \to C_r^{\Top}(X_\Lambda)$.
\end{dfn}

\begin{rmk} \label{rmk:phidef}
The map $\Phi$ intertwines the boundary maps, so is a chain map. It therefore induces a homomorphism $\Phi_* : H_*
(\Lambda) \to H_*^{\Top} (X_\Lambda)$.

It will be shown in \cite{kkqs} that each $k$-graph morphism $\theta : \Lambda \to \Gamma$ induces
a continuous map $\widetilde{\theta} : X_\Lambda \to X_\Gamma$ such that $\widetilde{\theta} \circ
\iota_\lambda = \iota_{\theta(\lambda)}$ for all $\lambda \in Q(\Lambda)$. Hence both the chain map
$\Phi$ and the homomorphism $\Phi_*$ of homology are natural in $\Lambda$. (with respect to
$k$-graph morphisms).
\end{rmk}

\begin{thm}\label{thm:homologies agree}
Let $\Lambda$ be a $k$-graph. For each $r \ge 0$, the map $\Phi_* : H_r(\Lambda) \to
H_r^{\Top}(X_\Lambda)$ is an isomorphism. Moreover this isomorphism is natural in $\Lambda$.
\end{thm}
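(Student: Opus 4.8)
The goal is to show that the chain map $\Phi : C_*(\Lambda) \to C_*^{\Top}(X_\Lambda)$ induces an isomorphism on homology, naturally in $\Lambda$. This is exactly the cubical analog of the classical theorem that the cellular (or $\Delta$-complex) chain complex computes singular homology, and the excerpt tells us the intended model is Hatcher's argument for $\Delta$-complexes. So the plan is to regard $X_\Lambda$ as a CW complex whose $r$-cells are indexed by the $r$-cubes $\lambda \in Q_r(\Lambda)$, with characteristic maps $\iota_\lambda : \bI^r \to X_\Lambda$, and to compare our combinatorial complex $C_*(\Lambda)$ with the singular complex via the skeletal filtration.

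\textbf{The CW structure and the skeletal filtration.}
First I would make precise that $X_\Lambda$ is a CW complex whose $r$-skeleton $X^r$ is the image of $\bigsqcup_{\lambda \in Q(\Lambda),\, |\lambda| \le r} \iota_\lambda$, with the attaching of the $r$-cell $\lambda$ governed by the faces $F_j^\ell(\lambda)$; this identification of $X_\Lambda$'s cell structure with the cube data of $\Lambda$ is essentially built into the definition~\eqref{eq:sim def} and the face formulas of Remark~\ref{rmk:2faces}, and will presumably be cited from \cite{kkqs}. Then I would run the standard computation of the relative groups $H_r^{\Top}(X^r, X^{r-1})$: because each open $r$-cell is attached along its boundary, $H_r^{\Top}(X^r,X^{r-1})$ is free abelian on the $r$-cells, i.e.\ canonically isomorphic to $C_r(\Lambda) = \ZZ Q_r(\Lambda)$, while $H_n^{\Top}(X^r,X^{r-1}) = 0$ for $n \ne r$. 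The heart of the argument is to verify that the cellular boundary map—the composite $H_r^{\Top}(X^r,X^{r-1}) \to H_{r-1}^{\Top}(X^{r-1}) \to H_{r-1}^{\Top}(X^{r-1},X^{r-2})$ from the long exact sequences of the pairs—agrees, under these identifications, with our combinatorial $\partial_r$ of~\eqref{eq:bondarydef}.

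\textbf{Matching the boundary maps (the crux).}
The main obstacle is precisely this boundary-map matching: one must show that the cellular boundary of the cell $\lambda$ is $\sum_{\ell,j} (-1)^{j+\ell} F_j^\ell(\lambda)$, with the correct signs. The degree-theoretic content is that when the boundary $\partial \bI^r$ of the characteristic cube is collapsed and one projects onto the $(r-1)$-cell for a given face $F_j^\ell(\lambda)$, the resulting self-map of $S^{r-1}$ has degree $(-1)^{j+\ell}$ (up to a global convention). This is a careful orientation bookkeeping on the cube: the two faces $F_j^0$ and $F_j^1$ in direction $j$ carry opposite orientations, accounting for the $(-1)^\ell$, and the reordering of the remaining coordinate directions accounts for the $(-1)^j$. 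I would carry this out by tracking the standard orientation of $\bI^r$ induced on each facet, matching it against the cube ordering $i_1 < \cdots < i_r$ that defines $F_j^\ell$, and confirming the signs coincide with those in~\eqref{eq:bondarydef}; the warm-up cases $r=1$ (where $\partial_1 f = s(f) - r(f)$) and $r=2$ (equation~\eqref{eq:partial2}) serve as sign-convention checkpoints. Once the two boundary maps agree, $\Phi$ is an isomorphism of the combinatorial complex onto the cellular chain complex of $X_\Lambda$.

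\textbf{Concluding and naturality.}
With the identification of chain complexes in hand, I would invoke the standard theorem that cellular homology computes singular homology of a CW complex to conclude that $\Phi_* : H_r(\Lambda) \to H_r^{\Top}(X_\Lambda)$ is an isomorphism for every $r$. For naturality, I would use Remark~\ref{rmk:phidef}: a $k$-graph morphism $\theta : \Lambda \to \Gamma$ induces $\widetilde\theta : X_\Lambda \to X_\Gamma$ with $\widetilde\theta \circ \iota_\lambda = \iota_{\theta(\lambda)}$, so $\widetilde\theta$ is cellular and carries the cell $\lambda$ to the cell $\theta(\lambda)$ (or into the lower skeleton when $\theta(\lambda)$ has smaller degree). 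Hence the square relating $\Phi$ for $\Lambda$ and for $\Gamma$ commutes at the level of cellular chains, and passing to homology gives naturality of $\Phi_*$. The genuinely delicate part is the sign verification in the boundary-matching step; everything else is an application of the machinery of cellular homology together with the combinatorial descriptions of cubes and faces already established in Section~\ref{sec:homology}.
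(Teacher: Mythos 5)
Your proposal is correct in outline but takes a genuinely different route from the paper's. You identify $C_*(\Lambda)$ with the \emph{cellular} chain complex of $X_\Lambda$ --- which forces you to compute the cellular boundary map explicitly and match its signs $(-1)^{j+\ell}$ against~\eqref{eq:bondarydef} via degree/orientation bookkeeping on the facets of $\bI^r$ --- and then cite the theorem that cellular homology computes singular homology. The paper instead follows Hatcher's proof of \cite[Theorem~2.27]{Hatcher:AlgTop02} directly: it introduces the filtration subcomplexes $C^\Lambda_*(X_m)$ of the combinatorial complex, proves (Lemma~\ref{lem:reliso}) that $\Phi_*$ is an isomorphism on the relative groups $H^\Lambda_r(X_m,X_{m-1}) \to H^{\Top}_r(X_m,X_{m-1})$ using good pairs and the homeomorphism $(Q_r(\Lambda)\times\bI^r)/(Q_r(\Lambda)\times\partial\bI^r) \cong X_r/X_{r-1}$, and then runs an induction on $m$ with the Five Lemma applied to the map of long exact sequences of the pairs $(X_m,X_{m-1})$. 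The payoff of the paper's route is that the boundary-matching step you identify as the crux never has to be done: since $\Phi$ is already known to be a chain map respecting the filtration, compatibility of the boundaries is automatic, and only the relative isomorphisms (free abelian on the $m$-cells in degree $m$, zero otherwise) are needed. Your route buys an explicit description of the cellular differential, at the cost of the sign verification.

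One point in your plan needs more care than you give it: the standard theorem that cellular homology computes singular homology produces an \emph{abstract} isomorphism $H^{CW}_r(X_\Lambda) \cong H^{\Top}_r(X_\Lambda)$ via a diagram chase, whereas the statement to be proved is that the \emph{specific} map $\Phi_*$ is an isomorphism. You would still need to check that, under your identification of $C_*(\Lambda)$ with the cellular complex, the composite of these isomorphisms coincides with $\Phi_*$; tracing this through is essentially the Five Lemma induction of the paper's proof, so the two arguments reconverge at exactly this point. Also note that the paper must first reconcile Massey's cube-based singular homology with the simplex-based theory (Remark~\ref{rmk:Massey vs Hatcher}, via the Eilenberg--Steenrod uniqueness theorem) before any of the cited CW machinery applies; your proposal should acknowledge this step as well.
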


Our proof parallels the argument of the first three paragraphs of \cite[Theorem~2.27]{Hatcher:AlgTop02} where it is shown that
the singular homology of a $\Delta$-complex (see \cite[page~103]{Hatcher:AlgTop02}) is the same as its simplicial homology.
We first need to do some setting up.

\begin{rmk}\label{rmk:Massey vs Hatcher}
We claim that Massey's definition of singular homology, which is based on cubes, is equivalent to the usual one based on simplices.
By the uniqueness theorem of \cite{Milnor:PJM62}, if $X$ has the homotopy type of a CW-complex, then any homology theory
on $X$ which satisfies the Eilenberg-Steenrod axioms \cite{EilenbergSteenrod:FoundationsAlgTop} and which is additive in the
sense that it carries disjoint unions to direct sums is naturally isomorphic to the usual singular homology. The Eilenberg-Steenrod
axioms and additivity are all verified for Massey's singular homology in \cite[Chapter~VII]{Massey}: Axiom~1 is (3.4), Axiom~2
is (3.5), Axiom 3 is (7.6.1), Axiom~4 is Theorem~5.1, Axiom~5 is Theorem~6.1, Axiom~6 is Theorem~6.2, Axiom~7 is
Example~2.1, and additivity is Proposition~2.7. Alternatively that Massey's homology agrees with the simplicial formulation also
follows from the original uniqueness theorem \cite[Theorem~10.1]{EilenbergSteenrod:FoundationsAlgTop} since we can
triangulate $X_\Lambda$ by adding a vertex at the centre of each cube (thereby dividing each $r$-cube into $2^r r!$
$r$-simplices).
\end{rmk}

To run Hatcher's argument, we use the cellular structure of $X_\Lambda$ regarded as a CW-complex. For $0 \le m \le k$ let
$X_m$ denote the union of the images of the $\iota_\lambda$ where $\lambda$ ranges over all $r$-cubes with $r \le m$.   We
formally define $C_r^\Lambda(X_m) = C_r(\Lambda)$ if $m \ge r$ and to be zero otherwise.  We obtain a nested sequence
\[
    C_*^\Lambda(X_0) \subseteq C_*^\Lambda(X_1) \subseteq \cdots \subseteq C_*^\Lambda(X_k) = C_*(\Lambda)
\]
of complexes. In particular, for $l \le m$ we may form the quotient complex
\[
    C_*^\Lambda(X_m, X_{l}) := C_*^\Lambda(X_m)/ C_*^\Lambda(X_l),
\]
which has relative homology groups $H_*^\Lambda(X_m, X_{l})$.  Then
\begin{equation} \label{eq:relhom}
H_r^\Lambda(X_m, X_{m-1}) \cong C_r^\Lambda(X_m, X_{m-1}) =
\begin{cases}
C_r (\Lambda) & \text{ if } m = r,\\
\{ 0 \} & \text{ otherwise. }
\end{cases}
\end{equation}

Since every short exact sequence of complexes induces a long exact sequence in homology (see
\cite[Theorem 2.16]{Hatcher:AlgTop02}), we obtain a long exact sequence
\begin{equation}\label{eq:relative LES}
\begin{split}
\cdots \longrightarrow  H^\Lambda_{r+1}(X_m, X_{m-1})
    \longrightarrow H^\Lambda_r&(X_{m-1})
    \longrightarrow H^\Lambda_r(X_m)
    \longrightarrow H^\Lambda_r(X_m, X_{m-1}) \\
    &\longrightarrow H^\Lambda_{r-1}(X_{m-1}) \longrightarrow
    \cdots \longrightarrow H^\Lambda_0(X_m, X_{m-1}).
\end{split}
\end{equation}

The map $\Phi : C_*(\Lambda) \to C_*^{\Top}(X_\Lambda)$ induces a map from $C_*(X_m)$ to
$C_*^{\Top}(X_m)$ for each $m$. Hence, it induces a map, also called $\Phi$, from $C^\Lambda_*(X_m,
X_{m-1})$ to $C^{\Top}_*(X_m, X_{m-1})$.

The crucial step in Hatcher's proof of \cite[Theorem 2.27]{Hatcher:AlgTop02} is the following
isomorphism.

\begin{lem} \label{lem:reliso}
With notation as above, the induced map
\[
\Phi_* : H_r^\Lambda(X_m, X_{m-1}) \to H_r^{\Top}(X_m, X_{m-1})
\]
is an isomorphism for each $r,m$.
\end{lem}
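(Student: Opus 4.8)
The plan is to run Hatcher's cellular-homology argument through the characteristic maps $\iota_\lambda$ of the cubes, reducing the topological side to the standard singular homology of the CW pairs $(X_m, X_{m-1})$ via Remark~\ref{rmk:Massey vs Hatcher}. In the only nontrivial degree both groups will turn out to be free abelian on the set $Q_m(\Lambda)$ of $m$-cubes, and the whole content is to check that $\Phi_*$ carries one free basis to the other.

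First I would dispose of the case $r \ne m$. On the combinatorial side the relative complex $C_*^\Lambda(X_m, X_{m-1})$ is concentrated in degree $m$ by~\eqref{eq:relhom}, so $H_r^\Lambda(X_m, X_{m-1}) = 0$ whenever $r \ne m$. On the topological side, $(X_m, X_{m-1})$ is a CW pair and $X_m$ is obtained from $X_{m-1}$ by attaching one $m$-cell for each $\lambda \in Q_m(\Lambda)$ with characteristic map $\iota_\lambda$; hence the standard fact that the relative singular homology of a CW pair is concentrated in the degree of the cell dimension (\cite[Lemma~2.34]{Hatcher:AlgTop02}, applicable by Remark~\ref{rmk:Massey vs Hatcher}) gives $H_r^{\Top}(X_m, X_{m-1}) = 0$ for $r \ne m$. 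So $\Phi_*$ is trivially an isomorphism of zero groups in these degrees.

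It remains to treat $r = m$, where by~\eqref{eq:relhom} the source is $C_m(\Lambda) = \ZZ Q_m(\Lambda)$. I would first note that the restriction of $\iota_\lambda$ to $\partial \bI^m$ lands in $X_{m-1}$ --- this is exactly the face computation, recorded in Remark~\ref{rmk:phidef}, that makes $\Phi$ a chain map --- so each $\iota_\lambda$ is a relative cycle representing a class $\Phi_*(\lambda) = [\iota_\lambda] \in H_m^{\Top}(X_m, X_{m-1})$. The characteristic maps assemble into a map of pairs $\bigsqcup_{\lambda \in Q_m(\Lambda)} (\bI^m, \partial \bI^m) \to (X_m, X_{m-1})$ descending to a homeomorphism $X_m / X_{m-1} \cong \bigvee_{\lambda} \bI^m/\partial \bI^m$, a wedge of $m$-spheres. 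Since $(X_m, X_{m-1})$ is a good pair, passing to reduced homology gives isomorphisms $H_m^{\Top}(X_m, X_{m-1}) \cong \widetilde{H}_m^{\Top}(X_m/X_{m-1}) \cong \bigoplus_\lambda \widetilde{H}_m^{\Top}(\bI^m/\partial \bI^m)$, and each summand $\widetilde{H}_m^{\Top}(\bI^m/\partial \bI^m) \cong H_m^{\Top}(\bI^m, \partial \bI^m) \cong \ZZ$ is generated by the class of the identity cube $\id_{\bI^m}$. Tracing the generators through these identifications shows that $\Phi_*(\lambda) = [\iota_\lambda]$ is precisely the free generator of the summand indexed by $\lambda$; hence $\Phi_*$ maps the basis $Q_m(\Lambda)$ of $C_m(\Lambda)$ bijectively onto a basis of $H_m^{\Top}(X_m, X_{m-1})$, and is an isomorphism.

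The main obstacle is bookkeeping rather than conceptual: one must be certain that Massey's cubical singular theory behaves on the model pairs $(\bI^m, \partial \bI^m)$ exactly as ordinary singular homology does, so that $\id_{\bI^m}$ really generates $H_m^{\Top}(\bI^m, \partial \bI^m) \cong \ZZ$ and $\iota_\lambda$ represents the cellular generator. This is where Remark~\ref{rmk:Massey vs Hatcher} does the essential work, licensing the import of the standard CW computation; the only genuinely $k$-graph-specific input is that the maps $\iota_\lambda$ are the characteristic maps of a CW structure on $X_\Lambda$ with $m$-cells indexed by $Q_m(\Lambda)$, which comes from the construction of $X_\Lambda$ in \cite{kkqs}.
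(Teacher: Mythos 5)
Your proposal is correct and follows essentially the same route as the paper: the case $r \ne m$ is dispatched by \eqref{eq:relhom} together with the standard CW computation licensed by Remark~\ref{rmk:Massey vs Hatcher}, and the case $r = m$ is handled by collapsing $X_{m-1}$ to a point using that $(X_m, X_{m-1})$ and $\big(Q_m(\Lambda) \times \bI^m, Q_m(\Lambda)\times\partial\bI^m\big)$ are good pairs and invoking \cite[Proposition~2.22]{Hatcher:AlgTop02}. Your phrasing of the quotient as a wedge of spheres with an explicit check that $\Phi_*$ sends the basis $Q_m(\Lambda)$ to the cellular generators is just a mild rewording of the paper's identification of $X_m/X_{m-1}$ with $\big(Q_m(\Lambda)\times\bI^m\big)/\big(Q_m(\Lambda)\times\partial\bI^m\big)$.
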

\begin{proof}
Suppose that  $r \ne m$.   Then  $H_r^\Lambda(X_m, X_{m-1}) = \{0\}$ by~\eqref{eq:relhom} and $H_r^{\Top}(X_m,
X_{m-1}) = \{0\}$ by \cite[Lemma 2.3.4\,(a)]{Hatcher:AlgTop02}. Hence $\Phi_* : H_r^\Lambda(X_m, X_{m-1}) \to
H_r^{\Top}(X_m, X_{m-1})$ is an isomorphism for $m \ne r$. Since
\[
H_r^\Lambda(X_r, X_{r-1}) \cong C_r (\Lambda) = \ZZ{Q_r}(\Lambda)  \cong
H_r^{\Top}(Q_r (\Lambda)  \times \bI^r , Q_r (\Lambda)  \times \partial \bI^r),
\]
it suffices to show that the canonical map $Q_r(\Lambda) \times \bI^r \to X_r$ given by $(\lambda, t) \mapsto
\iota_\lambda(t)$  induces an isomorphism
\[
H_r^{\Top}(Q_r (\Lambda)  \times \bI^r , Q_r (\Lambda)  \times \partial \bI^r) \cong H_r^{\Top}(X_r, X_{r-1}).
\]
To see this, observe that $(X_r, X_{r-1})$ is a good pair (see \cite[p.~114]{Hatcher:AlgTop02})  in the sense that $X_{r-1}$ is
a nonempty closed subset of $X_r$ which is a deformation retract of the open set
\[
X_{r-1} \cup \{[\lambda,t] : \lambda \in Q_r(\Lambda),
\min\{t_i, 1-t_i\} < 1/3\text{ for } 1 \le i \le r \}.
\]
Let $X_r/X_{r-1}$ be the quotient of $X_r$ obtained by identifying $X_{r-1}$ to a point. That  $(X_r, X_{r-1})$ is a good
pair combines with \cite[Proposition~2.22]{Hatcher:AlgTop02} and Remark~\ref{rmk:Massey vs Hatcher} to show that
\[
 H_r^{\Top}(X_r, X_{r-1}) \cong H_r^{\Top}(X_r/X_{r-1}).
\]
Moreover, $\Phi_r$ induces a homeomorphism of $(Q_r(\Lambda)  \times \bI^r) / (Q_r(\Lambda) \times
\partial \bI^n)$ with $X_r/X_{r-1}$. Since $(Q_r(\Lambda)  \times \bI^r, Q_r(\Lambda)  \times \partial \bI^r)$ is also a
good pair, the result follows from another application of \cite[Proposition~2.22]{Hatcher:AlgTop02}.
\end{proof}

\begin{proof}[Proof of Theorem~\ref{thm:homologies agree}]
The naturality of $\Phi_*$ was observed in Remark~\ref{rmk:phidef}. So we just need to
show that $\Phi_*$ is an isomorphism.

Both $H_r(\Lambda)$ and $H_r(X_\Lambda)$ are trivial for $r > k$, so we may assume that $0 \le r \le k$. Fix $m \in \NN$.
If $r \le m$ then we may regard the map $\Phi : C_r(\Lambda) \to C_r^{\Top}(X_\Lambda)$ given in
Definition~\ref{def:phidef} as a map from $C^\Lambda_r(X_m)$ to $C^{\Top}_r(X_m)$; whereas if $r > m$ then both
$C^\Lambda_r(X_m)$ and $C^{\Top}_r(X_m)$ are trivial, and we define $\Phi : C^\Lambda_r(X_m) \to
C^{\Top}_r(X_m)$ to be the trivial map between trivial groups. As in Remark~\ref{rmk:phidef}, $\Phi$ intertwines the
boundary maps, and so induces a homomorphism $\Phi_* : H^\Lambda_*(X_m) \to H^{\Top}_*(X_m)$.

We claim that these maps are all isomorphisms. We proceed by induction on $m$. Our base case is $m
= 0$. Since $X_0$ is equal to the discrete space $\Lambda^0$, each of $H^\Lambda_0(X_{0})$ and
$H^{\Top}_0(X_{0})$ is canonically isomorphic to $\ZZ\Lambda^0$, and $\Phi_*$ is the identity map.
Moreover, for $r \ge 1$, we have $H^\Lambda_r(X_{0})=H^{\Top}_r(X_{0}) =\{0\}$, so $\Phi_*$ is
trivially an isomorphism. Now fix $m \ge 1$ and suppose as an inductive hypothesis that $\Phi_*$ is
an isomorphism between $H_*^\Lambda(X_{m-1})$ and $H^{\Top}_*(X_{m-1})$.  Fix  $r \ge 0$.  Since
$\Phi_*$ induces a map of short exact sequences of complexes, the naturality of the connecting map
in the long exact sequence arising from a short exact sequence of complexes yields the following
commuting diagram.
\[
\begin{tikzpicture}[yscale=1.5, xscale=3]
    \node[inner sep=1pt] (01) at (0,1) {$H^\Lambda_r(X_m, X_{m-1})$};
    \node[inner sep=1pt] (11) at (1.1,1) {$H^\Lambda_r(X_{m-1})$};
    \node[inner sep=1pt] (21) at (2,1) {$H^\Lambda_r(X_m)$};
    \node[inner sep=1pt] (31) at (3,1) {$H^\Lambda_{r-1}(X_m, X_{m-1})$};
    \node[inner sep=1pt] (41) at (4.1,1) {$H^\Lambda_{r-1}(X_{m-1})$};
    \node[inner sep=1pt] (00) at (0,0) {$H^{\Top}_r(X_m, X_{m-1})$};
    \node[inner sep=1pt] (10) at (1.1,0) {$H^{\Top}_r(X_{m-1})$};
    \node[inner sep=1pt] (20) at (2,0) {$H^{\Top}_r(X_m)$};
    \node[inner sep=1pt] (30) at (3,0) {$H^{\Top}_{r-1}(X_m, X_{m-1})$};
    \node[inner sep=1pt] (40) at (4.1,0) {$H^{\Top}_{r-1}(X_{m-1})$};
    \foreach \x/\xx in {0/1,1/2,2/3,3/4,4/5} {
        \ifnum \x < 4 {
            \draw[-latex] (\x1)--(\xx1);
            \draw[-latex] (\x0)--(\xx0);
        }\fi
        \draw[-latex] (\x1)--(\x0) node[pos=0.5, anchor=west, inner sep=1pt] {$\Phi_*$};
    }
\end{tikzpicture}
\]
The inductive hypothesis ensures that the second and fifth vertical maps are isomorphisms, and the first and fourth maps are
isomorphisms by Lemma~\ref{lem:reliso}. Thus the Five Lemma (see, for example, \cite[p\,129]{Hatcher:AlgTop02}) implies
that the middle vertical map is also an isomorphism, completing the induction. Hence, $\Phi_*: H^\Lambda_r(X_m) \to
H^{\Top}_r(X_m)$ is an isomorphism for all $m$. Since $H_r(\Lambda) = H^\Lambda_r(X_k)$ and $H^{\Top}_r(X) =
H^{\Top}_r(X_k)$ for all $r \ge 0$ the desired result follows.
\end{proof}

\section{Cohomology and twisted $k$-graph $C^*$-algebras}\label{sec:cohomology}

In this section we introduce cohomology for $k$-graphs and indicate how a $\TT$-valued $2$-cocycle
may be used to twist a $k$-graph $C^*$-algebra. We first define the cohomology of a $k$-graph and
provide a Universal Coefficient Theorem. We then show how to associate to each $\TT$-valued
$2$-cocycle $\phi$ on $\Lambda$ a twisted $C^*$-algebra $C^*_\phi(\Lambda)$. We obtain as
relatively elementary examples all noncommutative tori and the Heegaard-type quantum 3-spheres of
\cite{BaumHajacEtAl:K-th05}. We will study cohomology for $k$-graphs and the structure of twisted
$k$-graph $C^*$-algebras in greater detail  in \cite{KPS4}.

\begin{ntn}
Let $\Lambda$ be a $k$-graph and let $A$ be an abelian group. For $r \in \NN$, we write
$\Ccub{r}(\Lambda, A)$ for the collection of all functions $f : Q_r(\Lambda) \to A$. We identify
$\Ccub{r}(\Lambda, A)$ with $\Hom(C_r(\Lambda), A)$ in the usual way. Define maps $\dcub{r} :
\Ccub{r}(\Lambda,A) \to \Ccub{r+1}(\Lambda,A)$ by
\[
\dcub{r}(f)(\lambda) := f(\partial_{r+1}(\lambda)) =
\sum^{r+1}_{i=1}\sum_{l=0}^1 (-1)^{i+l} f(F_i^l(\lambda)).
\]
Then $( \Ccub{*}(\Lambda, A), \dcub{*})$ is a cochain complex.
\end{ntn}

Mac Lane \cite[Chapter~II, Equation~(3.1)]{MacLane} associates a cochain complex to a chain complex
and an abelian group in a similar way, but with a slightly different sign convention for the
boundary map. The resulting cohomology is isomorphic to the following.

\begin{dfn}
We define the \emph{cohomology} $\Hcub*(\Lambda, A)$ of the $k$-graph $\Lambda$ with coefficients
in $A$ to be the cohomology of the complex $\Ccub*(\Lambda,A)$; that is $\Hcub{r}(\Lambda,A) :=
\ker(\dcub{r})/\image(\dcub{r-1})$. For $r \ge 0$, we write $\Zcub{r}(\Lambda,A) := \ker(\dcub{r})$
for the group of $r$-cocycles, and for $r > 0$, we write $\Bcub{r}(\Lambda,A) = \image(\dcub{r-1})$
for the group of $r$-coboundaries.
\end{dfn}

\begin{thm}[Universal Coefficient Theorem]\label{thm:uct}
Let $\Lambda$ be a $k$-graph, and let $A$ be an abelian group. For each $r \ge 0$, there is a short
exact sequence
\[
    0 \longrightarrow \Ext(H_{r-1}(\Lambda), A)
        \xrightarrow{\alpha} \Hcub{r}(\Lambda, A)
        \xrightarrow{\beta} \Hom(H_r(\Lambda), A)
        \longrightarrow 0,
\]
and the maps $\alpha$ and $\beta$ are natural in $A$ and $\Lambda$.
\end{thm}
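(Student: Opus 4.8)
The plan is to deduce this from the purely algebraic Universal Coefficient Theorem for cohomology, applied to the chain complex $C_*(\Lambda)$. The only hypothesis that theorem needs is that the chain groups be free abelian, and this holds here because $C_r(\Lambda) = \ZZ Q_r(\Lambda)$ is free by Notation~\ref{ntn:F.A.G.}; consequently its subgroups of cycles $Z_r := \ker(\partial_r)$ and boundaries $B_r := \image(\partial_{r+1})$ are free as well. Since by definition $\Ccub{r}(\Lambda, A) = \Hom(C_r(\Lambda), A)$ with $\dcub{r} = \Hom(\partial_{r+1}, \id_A)$, the cochain complex $(\Ccub*(\Lambda, A), \dcub*)$ is precisely $\Hom(C_*(\Lambda), A)$ and its cohomology is $\Hcub*(\Lambda, A)$. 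The coboundary convention $\dcub{r}(f)(\lambda) = f(\partial_{r+1}(\lambda))$ agrees with Hatcher's, so I would simply invoke \cite[Theorem~3.2]{Hatcher:AlgTop02}; alternatively the version in \cite[Chapter~III]{MacLane} applies once one accounts for the sign discrepancy noted just before the statement, which only changes the complex by an isomorphism.

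For a self-contained account I would run the standard two-step argument. Freeness of $B_{r-1}$ splits the sequence $0 \to Z_r \to C_r \xrightarrow{\partial_r} B_{r-1} \to 0$, so applying $\Hom(-, A)$ gives a short exact sequence of cochain complexes
\[
0 \to \Hom(B_{*-1}, A) \to \Hom(C_*, A) \to \Hom(Z_*, A) \to 0
\]
whose outer terms carry zero differential; its long exact sequence expresses $\Hcub{r}(\Lambda, A)$ in terms of the connecting map, which is the transpose $i_r^*$ of the inclusion $i_r : B_r \hookrightarrow Z_r$. Feeding the defining sequence $0 \to B_r \xrightarrow{i_r} Z_r \to H_r(\Lambda) \to 0$ into $\Hom(-, A)$ and using $\Ext(Z_r, A) = 0$ identifies $\ker(i_r^*) \cong \Hom(H_r(\Lambda), A)$ and $\coker(i_r^*) \cong \Ext(H_r(\Lambda), A)$. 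Combining the two yields the asserted sequence, with $\beta$ the surjection onto $\ker(i_r^*)$ and $\alpha$ the inclusion of $\coker(i_{r-1}^*)$.

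Naturality in $A$ is immediate, since every map above is built from the functors $\Hom(-, A)$ and $\Ext(-, A)$ and involves no choices. For naturality in $\Lambda$, a $k$-graph morphism $\psi : \Lambda_1 \to \Lambda_2$ induces a chain map $\psi_* : C_*(\Lambda_1) \to C_*(\Lambda_2)$ that carries cycles to cycles and boundaries to boundaries, hence is compatible with both short exact sequences above; the naturality of the connecting homomorphism and of the $\Hom$/$\Ext$ identifications then forces the relevant squares to commute. I expect the only real friction to be bookkeeping: keeping the degree shift straight so that $\coker(i_{r-1}^*)$ rather than $\coker(i_r^*)$ feeds into $\Hcub{r}(\Lambda, A)$, and checking that these $\alpha$ and $\beta$ coincide with the ones produced by the cited black-box theorem so that its naturality statement can be borrowed directly.
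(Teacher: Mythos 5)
Your proposal is correct and matches the paper's approach: the paper's entire proof is to apply Mac Lane's algebraic Universal Coefficient Theorem \cite[Theorem~III.4.1]{MacLane} to the free complex $C_*(\Lambda)$, exactly as you do, and the sign-convention issue you flag is the one the paper dispatches in the remark preceding the definition of $\Hcub{*}(\Lambda,A)$. The extra detail you supply (the two-step splitting argument and the naturality check) is a fuller write-up of the same route rather than a different one.
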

\begin{proof}
This follows directly from Mac Lane's theorem \cite[Theorem~III.4.1]{MacLane} applied to the
complex $C_*(\Lambda)$.
\end{proof}

Recall from \cite{KP2000} that a $k$-graph $\Lambda$ is \emph{row-finite} if $v\Lambda^n$ is finite
for all $v \in \Lambda^0$ and $n \in \NN^k$, and is \emph{locally convex} if, whenever $1 \le i
\not= j \le k$ and $\lambda \in \Lambda^{e_i}$ with $r(\lambda) \Lambda^{e_j} \not= \emptyset$, we
have $s(\lambda)\Lambda^{e_j} \not= \emptyset$ also.

We will follow the usual convention of writing the binary operation in an abelian group $A$ additively, except when $A = \TT$
where it is written multiplicatively.

\begin{dfn}[{cf. \cite[Equation~(3.1)]{RSY1} and \cite[Theorem~C.1(i)--(ii)]{RSY2}}]\label{def:twisted CK}
Let $\Lambda$ be a row-finite locally convex $k$-graph and fix $\phi \in \Zcub2(\Lambda,\TT)$. A Cuntz-Krieger $\phi$-representation of $\Lambda$ in a $C^*$-algebra $A$ is a set $\{p_v : v \in
\Lambda^0\} \subseteq A$ of mutually orthogonal projections and a set $\{s_\lambda : \lambda \in
\bigcup^k_{i=1} \Lambda^{e_i}\} \subseteq A$ satisfying
\begin{enumerate}
    \item\label{it:new CK1} for all $\lambda \in \Lambda^{e_i}$, $s_\lambda^*s_\lambda =
        p_{s(\lambda)}$;
    \item\label{it:new CK2} for all $1 \le i < j \le k$ and $\mu,\mu' \in \Lambda^{e_i}$,
        $\nu,\nu' \in \Lambda^{e_j}$ such that $\mu\nu = \nu'\mu'$,
    \[
        s_{\nu'} s_{\mu'} = \phi(\mu\nu)s_\mu s_\nu;\text{ and}
    \]
    \item\label{it:new CK3} for all $v \in \Lambda^0$ and all $i = 1, \dots, k$ such that
        $v\Lambda^{e_i} \not= \emptyset$,
    \[
        p_v =  \sum_{\lambda \in v\Lambda^{e_i}} s_\lambda s_\lambda^*.
    \]
\end{enumerate}
\end{dfn}

The condition that a set $\{p_v : v \in \Lambda^0\}$ consists of mutually orthogonal projections is
characterised by the algebraic relations $p_v^* = p_v^2 = p_v$ and $p_v p_w = \delta_{v,w} p_v$ for
all $v,w \in \Lambda^0$. Given any collection $\{p_v : v \in \Lambda^0\}$ in a $^*$-algebra
satisfying these relations, and given any family $\{s_\lambda : \lambda \in \bigcup^k_{i=1}
\Lambda^{e_i}\}$ in the same $^*$-algebra satisfying relation~(\ref{it:new CK1}), the norm of the
image of each $p_v$ and of each $s_\lambda$ under any representation on Hilbert space is at most 1.
So as in \cite[Definition~1.2]{Blackadar:MS85}, there is a universal $C^*$-algebra generated by a
Cuntz-Krieger $\phi$-representation of $\Lambda$. A priori, this could be the zero algebra; but we
will exhibit some interesting examples (see Examples~\ref{ex:irrational},  \ref{ex:noncommutative
tori}, \ref{ex:Heegaard}) where it is not, and we will show in the forthcoming article \cite{KPS4}
that in fact there is always a Cuntz-Krieger $\phi$-representation of $\Lambda$ in which every
generator is nonzero.

\begin{dfn}\label{def:twisted algebra}
Let $\Lambda$ be a row-finite locally convex $k$-graph. Let $\phi \in \Zcub2(\Lambda,\TT)$. We
define $C^*_\phi(\Lambda)$ to be the universal $C^*$-algebra generated by a Cuntz-Krieger
$\phi$-representation of~$\Lambda$.
\end{dfn}

\goodbreak

\begin{prop}\label{prp:cohomologous cocycles}
Let $\Lambda$ be a row-finite locally convex $k$-graph.
\begin{enumerate}
\item\label{it:trivial twist} Let $1$ denote the identity element of $C_2(\Lambda, \TT)$. Then
    $C^*_1(\Lambda)$ is canonically isomorphic to the $k$-graph algebra $C^*(\Lambda)$ defined
    in \cite{RSY1}.
\item\label{it:cohomologous twists} Let $\psi, \phi \in Z^2(\Lambda, \TT)$, and suppose that
    $\alpha \in C^1(\Lambda,\TT)$ satisfies $\phi = \delta^1(\alpha)\psi$ so that $\phi$ and
    $\psi$ are cohomologous. Let $\{p^\psi_v : v \in \Lambda^0\}$, $\{s^\psi_\lambda : \lambda
    \in \bigsqcup^k_{i=1} \Lambda^{e_i}\}$ be the universal generating Cuntz-Krieger
    $\psi$-representation of $\Lambda$ and similarly for $\phi$. Then there is an isomorphism
    $\pi : C^*_\psi(\Lambda) \to C^*_\phi(\Lambda)$ such that $\pi(p^\psi_v) = p^\phi_v$ for
    all $v \in \Lambda^0$ and $\pi(s^\psi_\lambda) = \alpha(\lambda) s^\phi_\lambda$ for all
    $\lambda \in \bigcup^k_{i=1}\Lambda^{e_i}$.
\end{enumerate}
\end{prop}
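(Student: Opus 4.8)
The plan is to dispatch part~(\ref{it:trivial twist}) by comparing presentations and to put the real work into the rescaling argument of part~(\ref{it:cohomologous twists}). For part~(\ref{it:trivial twist}), I would note that when $\phi = 1$ the only relation in Definition~\ref{def:twisted CK} in which $\phi$ appears is~(\ref{it:new CK2}), which then reads $s_{\nu'}s_{\mu'} = s_\mu s_\nu$ whenever $\mu\nu = \nu'\mu'$. This is exactly the condition ensuring that for any $\lambda \in \Lambda$ and any factorisation $\lambda = \lambda_1\cdots\lambda_n$ into edges, the product $s_{\lambda_1}\cdots s_{\lambda_n}$ is independent of the factorisation. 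Defining $t_\lambda$ to be this common value (and $t_v := p_v$) produces a Cuntz--Krieger $\Lambda$-family in the sense of~\cite{RSY1}, while conversely any such family restricts on edges to a Cuntz--Krieger $1$-representation; this is precisely the equivalence of relations recorded in~\cite[Theorem~C.1(i)--(ii)]{RSY2}. Matching the two universal properties then gives the canonical isomorphism $C^*_1(\Lambda) \cong C^*(\Lambda)$.

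For part~(\ref{it:cohomologous twists}), the idea is to rescale the universal generating $\phi$-representation by $\alpha$ and verify that the result is a $\psi$-representation. Set $q_v := p^\phi_v$ for $v \in \Lambda^0$ and $t_\lambda := \alpha(\lambda)s^\phi_\lambda$ for each edge $\lambda$. Since $\alpha(\lambda) \in \TT$ has modulus one, relations~(\ref{it:new CK1}) and~(\ref{it:new CK3}) for $\{q_v, t_\lambda\}$ are immediate, as the scalar and its conjugate cancel in $t_\lambda^*t_\lambda = |\alpha(\lambda)|^2(s^\phi_\lambda)^*s^\phi_\lambda$ and in $\sum_{\lambda \in v\Lambda^{e_i}} t_\lambda t_\lambda^* = \sum_{\lambda \in v\Lambda^{e_i}} |\alpha(\lambda)|^2 s^\phi_\lambda (s^\phi_\lambda)^*$.

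The content is relation~(\ref{it:new CK2}). Writing a $2$-cube $\lambda = \mu\nu = \nu'\mu'$ in the notation of~\eqref{eq:partial2}, so that $\mu = F^0_2(\lambda)$, $\nu = F^1_1(\lambda)$, $\nu' = F^0_1(\lambda)$ and $\mu' = F^1_2(\lambda)$, the definition of $\dcub1$ (read multiplicatively in $\TT$) gives
\[
\dcub1(\alpha)(\lambda) = \frac{\alpha(F^1_1(\lambda))\,\alpha(F^0_2(\lambda))}{\alpha(F^0_1(\lambda))\,\alpha(F^1_2(\lambda))} = \frac{\alpha(\mu)\,\alpha(\nu)}{\alpha(\mu')\,\alpha(\nu')}.
\]
Then $t_{\nu'}t_{\mu'} = \alpha(\nu')\alpha(\mu')\,s^\phi_{\nu'}s^\phi_{\mu'} = \alpha(\nu')\alpha(\mu')\,\phi(\lambda)\,s^\phi_\mu s^\phi_\nu$ by relation~(\ref{it:new CK2}) for the $\phi$-representation, so that verifying $t_{\nu'}t_{\mu'} = \psi(\lambda)\,t_\mu t_\nu$ reduces to the scalar identity $\alpha(\nu')\alpha(\mu')\,\phi(\lambda) = \psi(\lambda)\,\alpha(\mu)\alpha(\nu)$, i.e.\ $\phi(\lambda) = \dcub1(\alpha)(\lambda)\,\psi(\lambda)$, which is exactly the hypothesis $\phi = \dcub1(\alpha)\psi$. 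Hence $\{q_v, t_\lambda\}$ is a Cuntz--Krieger $\psi$-representation, and the universal property of $C^*_\psi(\Lambda)$ yields a homomorphism $\pi$ with $\pi(p^\psi_v) = p^\phi_v$ and $\pi(s^\psi_\lambda) = \alpha(\lambda)s^\phi_\lambda$.

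Finally, to see that $\pi$ is an isomorphism I would repeat the construction with the roles of $\phi$ and $\psi$ exchanged, using $\alpha^{-1} \in \Ccub1(\Lambda,\TT)$: because $\dcub1$ is a homomorphism we have $\psi = \dcub1(\alpha)^{-1}\phi = \dcub1(\alpha^{-1})\phi$, so the same computation produces $\pi' : C^*_\phi(\Lambda) \to C^*_\psi(\Lambda)$ with $\pi'(p^\phi_v) = p^\psi_v$ and $\pi'(s^\phi_\lambda) = \overline{\alpha(\lambda)}\,s^\psi_\lambda$. Then $\pi'\circ\pi$ fixes each $p^\psi_v$ and sends $s^\psi_\lambda \mapsto \alpha(\lambda)\overline{\alpha(\lambda)}\,s^\psi_\lambda = s^\psi_\lambda$, and symmetrically for $\pi\circ\pi'$; since these generators generate the respective $C^*$-algebras, both composites are the identity. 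The only step demanding real care is keeping the face-map and sign conventions consistent so that the expression for $\dcub1(\alpha)(\lambda)$ lines up exactly with the factorisation $\mu\nu = \nu'\mu'$ appearing in relation~(\ref{it:new CK2}); once that bookkeeping is correct, everything else is formal.
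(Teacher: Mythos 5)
Your proposal is correct and follows essentially the same route as the paper: part (1) is dispatched by the same appeal to \cite[Theorem~C.1 and Lemma~B.4]{RSY2}, and part (2) is the same rescaling argument, showing that $\{p^\phi_v\}$ together with $t_\lambda := \alpha(\lambda)s^\phi_\lambda$ forms a Cuntz--Krieger $\psi$-representation via the scalar identity $\alpha(\nu')\alpha(\mu')\phi(\mu\nu) = \alpha(\mu)\alpha(\nu)\psi(\mu\nu)$, then invoking the universal property and reversing the roles of $\phi$ and $\psi$ for the inverse. Your identification of the face maps with $\mu,\nu,\mu',\nu'$ and the resulting multiplicative formula for $\dcub1(\alpha)$ match the paper's conventions exactly, so the bookkeeping you flagged as the delicate point is in order.
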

\begin{proof}
(\ref{it:trivial twist}) The combination of \cite[Theorem~C.1 and Lemma~B.4]{RSY2} shows that
$C^*(\Lambda)$ is the universal $C^*$-algebra generated by elements satisfying the relations of
Definition~\ref{def:twisted CK} with $\phi(\mu\nu) = 1$ for all $\mu\nu \in Q_2(\Lambda)$.

(\ref{it:cohomologous twists}) For $\lambda \in  \bigcup^k_{i=1} \Lambda^{e_i}$, let $t_\lambda :=
\alpha(\lambda) s^\phi_\lambda$. If $\mu\nu = \nu'\mu'$ where $\mu,\mu' \in \Lambda^{e_i}$,
$\nu,\nu' \in \Lambda^{e_j}$ and $1 \le i < j \le k$, then $\delta^1(\alpha) =
\alpha(\mu')^{-1}\alpha(\nu')^{-1}\alpha(\mu)\alpha(\nu)$. Hence
\[
\alpha(\nu') \alpha(\mu') \phi(\mu\nu)
    = \alpha(\nu')\alpha(\mu') \delta^1(\alpha)(\mu\nu) \psi(\mu\nu)
    = \alpha(\mu)\alpha(\nu)\psi(\mu\nu).
\]
Using this, we calculate:
\[
t_{\nu'} t_{\mu'} = \alpha(\nu') \alpha(\mu') s_{\nu'} s_{\mu'}
    = \alpha(\nu')\alpha(\mu') \phi(\mu\nu)s_\mu s_\nu
    = \alpha(\mu)\alpha(\nu)\psi(\mu\nu)s_\mu s_\nu
    = \psi(\mu\nu)t_\mu t_\nu.
\]
So $\{t_\lambda : \lambda \in \bigcup^k_{i=1} \Lambda^{e_i}\}$  satisfies
Definition~\ref{def:twisted CK}(\ref{it:new CK2}) for the cocycle $\psi$. Hence the collections
$\{p^\phi_v : v \in \Lambda^0\}$ and $\{t_\lambda : \lambda \in \bigcup^k_{i=1} \Lambda^{e_i}\}$ in
$C^*_\phi(\Lambda)$ constitute a Cuntz-Krieger $\psi$-representation of $\Lambda$. The universal
property of $C^*_\psi(\Lambda)$ therefore gives a homomorphism $\pi : C^*_\psi(\Lambda) \to
C^*_\phi(\Lambda)$ such that $\pi(p^\psi_v) = p^\phi_v$ for all $v \in \Lambda^0$ and
$\pi(s^\psi_\lambda) = t_\lambda = \alpha(\lambda) s^\phi_\lambda$ for all $\lambda \in
\bigcup^k_{i=1}\Lambda^{e_i}$. Reversing the roles of $\psi$ and $\phi$ in the above calculation
yields an inverse, so $\pi$ is an isomorphism.
\end{proof}

\begin{example}\label{ex:irrational}
Let $T_2$ denote $\NN^2$ regarded as a $2$-graph with degree functor the identity map (see
Examples~\ref{ex:kgraphs}(\ref{it:Tsubk})). Fix $\theta \in [0,1)$. There is precisely one 2-cube in $T_2$, namely $(1, 1)$.
Define $\phi \in \Zcub2(T_2, \TT )$ by $\phi(1,1) = e^{2{\pi}i\theta}$. By definition, $C_\phi^*(T_2)$ is the universal
$C^*$-algebra generated by unitaries $S_{e_1}$ and $S_{e_2}$ satisfying
\[
S_{e_2} S_{e_1} = e^{2{\pi}i\theta} S_{e_1} S_{e_2}.
\]
That is, $C_\phi^*(T_2)$ is the rotation algebra $A_\theta$.
\end{example}

\begin{rmk}
Theorem~2.1 of~\cite{kpq} says that the obstruction to a product system over $\NN^2$ of
$\CC$-correspondences being the product system associated to the $2$-graph $T_2$ is measured by the
element $\omega \in \TT$ which implements the module isomorphism $\CC \otimes \CC \to \CC \otimes
\CC$ between $X_{(1,0)} \otimes X_{(0,1)}$ and $X_{(0,1)} \otimes X_{(1,0)}$. We may regard
$H^2(T_2, \TT)$ as the receptacle for this obstruction.
\end{rmk}

\begin{example}\label{ex:noncommutative tori}
More generally consider the $k$-graph $T_k$ for $k \ge 2$.  Then the twisted $k$-graph $C^*$-algebras over $T_k$ correspond
exactly to the noncommutative tori (see for example \cite{IP2008}, \cite{EL2008}; note that their sign conventions differ). Let
$\theta$ be a skew-symmetric $k \times k$ real matrix, then the associated noncommutative torus $A_\theta$ is the universal
$C^*$-algebra generated by $k$ unitaries $u_1, \dots, u_k$, satisfying (see \cite{IP2008})
\begin{equation}\label{eq:noncommutative tori rels}
 u_n u_m = e^{2{\pi}i\theta_{m, n}} u_m u_n \qquad\textrm{for all}\quad 1 \le m, n \le k.
\end{equation}
Recall that $Q_2(T_k) = \{ e_m + e_n \mid 1 \le m < n \le k \}$. Set $\phi_\theta(e_m + e_n) = e^{2{\pi}i\theta_{m,
n}}$. Then $\phi(\theta)$ is a 2-cocycle. Moreover $C_{\phi(\theta)}^*(T_k)$ is the universal $C^*$-algebra generated by $k$
unitaries $S_{e_1}, \dots, S_{e_k}$ satisfying~\eqref{eq:noncommutative tori rels}. Hence, $A_\theta \cong
C_{\phi(\theta)}^*(T_k)$.
\end{example}

\begin{example}\label{ex:Heegaard}
In \cite{BaumHajacEtAl:K-th05} the authors describe $C^*$-algebras $C(S^3_{pq\theta})$ where $p, q,
\theta$ are parameters in $[0,1)$. They show that $C (S^3_{pq\theta}) \cong
C (S^3_{00\theta})$ \cite[Theorem~2.8]{BaumHajacEtAl:K-th05} for all $p,q,\theta$. By definition, $C (S^3_{00\theta})$
is the universal $C^*$-algebra generated by elements $S$ and $T$ satisfying
\begin{gather}
(1-SS^*)(1-TT^*) = 0, \label{eq:Heegaard1}\\
S^*S = T^*T = 1, \label{eq:Heegaard2}\\
ST = e^{2\pi i\theta}TS,\text{ and} \label{eq:Heegaard3}\\
ST^* = e^{-2\pi i\theta}T^*S.\label{eq:Heegaard4}
\end{gather}
It was shown in \cite[Remark~3.3]{HajacMatthesEtAl:ART06} that $C (S^3_{000})$ is isomorphic to the Cuntz-Krieger
algebra of the unique $2$-graph $\Lambda$ with skeleton $E_\Lambda$ as pictured below.
\[
\begin{tikzpicture}[scale=1.5]
    \node (u) at (0,0) {$u$};
    \node (v) at (1,0) {$v$};
    \node (w) at (-1,0) {$w$};
    \draw[blue, -latex, in=130, out=45] (u.north west) .. controls +(-0.6, 0.6) and +(0.6, 0.6) .. (u.north east) node[pos=0.5, above, black] {$a$};
    \draw[blue, -latex] (w.north west) .. controls +(-0.6, 0.6) and +(-0.6, -0.6) .. (w.south west) node[pos=0.5, left, black] {$b$};
    \draw[blue, -latex] (v)--(u) node[pos=0.5, below, black] {$c$};
    \draw[red, dashed, -latex] (u.south east) .. controls +(0.6, -0.6) and +(-0.6, -0.6) .. (u.south west) node[pos=0.5, below, black] {$f$};
    \draw[red, dashed, -latex] (v.south east) ..  controls +(0.6, -0.6) and +(0.6, 0.6) .. (v.north east)  node[pos=0.5, right, black] {$g$};
    \draw[red, dashed, -latex] (w)--(u) node[pos=0.5, above, black] {$h$};
\end{tikzpicture}
\]
Specifically, the isomorphism $C(S^3_{000}) \to C^*(\Lambda)$ carries $S$ to $s_a + s_b + s_c$ and
$T$ to $s_f + s_g + s_h$.

Note that $T_2 = \NN^2$ so the degree map on $\Lambda$ yields a $2$-graph morphism $f : \Lambda \to T_2$.   A routine
computation shows that $f_*$ induces an isomorphism on homology. Hence by Theorem~\ref{thm:uct}, $f^*$ induces an
isomorphism $H^2(T_2, \TT) \cong H^2(\Lambda, \TT)$.

Let $\alpha = ah = hb$, $\beta = cg = fc$ and $\tau = af = fa$; so $Q_2(\Lambda) = \{ \alpha, \beta, \tau \}$. For each
$\theta \in [0,1)$ the $2$-cocycle on $T_2$ determined by $(1,1) \mapsto e^{-2\pi i\theta}$ pulls back to a $2$-cocycle
$\phi_\theta$ on $\Lambda$ satisfying $\phi_\theta(\alpha) = \phi_\theta(\beta) = \phi_\theta(\tau) = e^{-2\pi i\theta}$
(the preceding paragraph shows that every $2$-cocycle on $\Lambda$ is cohomologous to one of this form). Fix $\theta \in
[0,1)$ and let $\{s_\lambda : \lambda \in \bigcup^k_{i=1} \Lambda^{e_i}\}$ and $\{p_v : v \in \Lambda^0\}$ be the
generators of $C^*_{\phi(\theta)}(\Lambda)$. Define $\overline{S}, \overline{T} \in C^*_{\phi(\theta)}(\Lambda)$ by
$\overline{S} := s_a + s_b + s_c$ and $\overline{T} = s_f + s_g + s_g$.   We have
\[
    \overline{S}\overline{T} = s_a s_f + s_c s_g + s_a s_h
        = e^{2\pi i\theta} s_fs_a + e^{2\pi i\theta}s_fs_c + e^{2\pi i\theta}s_hs_b
        = e^{2\pi i\theta} \overline{T}\overline{S}.
\]
So $\overline{S}, \overline{T}$ satisfy~\eqref{eq:Heegaard3}. Moreover
\begin{align*}
\overline{T}^* \overline{S}  &= \overline{T}^* p_u \overline{S} = ( s_f^* + s_g^* + s_h^* ) ( s_\alpha s_\alpha^* + s_\beta s_\beta^* + s_\tau s_\tau^* ) ( s_a + s_b + s_c ) \\
&=  s_f^* ( s_\beta s_\beta^* ) s_c + s_f^* ( s_\tau s_\tau^* ) s_a + s_h^*
( s_\alpha s_\alpha^* ) s_a \\
&= s_f^* ( e^{2\pi i \theta} s_f s_c )( s_g^* s_c^* ) s_c + s_f^* ( e^{2\pi i
\theta} s_f s_a )( s_f^* s_a^* ) s_a + s_h^* ( e^{2\pi i \theta} s_h s_b )(
s_h^* s_a^* ) s_a \\
&= e^{2\pi i \theta} ( s_c s_g^* + s_a s_f^* + s_b s_h^*   )
= e^{2\pi i \theta} ( s_a + s_b + s_c ) ( s_f^* + s_g^* + s_h^* ) \\
&= e^{2\pi i \theta} \overline{S} \overline{T}^* ,
\end{align*}
which establishes~\eqref{eq:Heegaard4}. That $\overline{S},\overline{T}$ also satisfy
\eqref{eq:Heegaard1}~and~\eqref{eq:Heegaard2} is routine.  Hence by the universal property of $C(S^3_{00\theta})$ the map
$S \to \overline{S}$ and $T$ to $\overline{T}$ extends to a homomorphism $\rho$ from $C(S^3_{00\theta})$ to
$C^*_{\phi(\theta)}(\Lambda)$.

Now let $S$ and $T$ be the generators of $C(S^3_{00\theta})$. Define
\[
    q_w = 1 - SS^*,\quad q_v = 1 - TT^*,\quad\text{ and }\quad q_u = SS^*TT^*,
\]
and
\[
t_\eta = q_{r(\eta)} S q_{s(\eta)} \text{ for $\eta \in \Lambda^{e_1}$},\quad\text{ and }
t_\eta = q_{r(\eta)} T q_{s(\eta)} \text{ for $\eta \in \Lambda^{e_2}$}.
\]
It is routine to check that the pair $\{q_u, q_v, q_w\}$, $\{t_a, t_b, t_c, t_f, t_g, t_h\}$ is a Cuntz-Krieger $\phi
(\theta)$-representation of $\Lambda$ in $C(S^3_{00\theta})$. So the universal property of $C^*_{\phi(\theta)}(\Lambda)$
yields a homomorphism $\psi : C^*_{\phi(\theta)} ( \Lambda )\to C ( S^3_{00\theta} )$ such that $\psi(p_x) = q_x$ for $x
\in \Lambda^0$ and $\psi(s_\eta) = t_\eta$ for $\eta \in \Lambda^{e_1} \cup \Lambda^{e_2}$. One verifies that $\psi =
\rho^{-1}$ and it follows that $C^*_{\phi(\theta)} ( \Lambda ) \cong C ( S^3_{00\theta} )$.

Our analysis of $\Hcub2(\Lambda , \TT )$, together with Proposition~\ref{prp:cohomologous
cocycles}, therefore shows that the collection of twisted $2$-graph $C^*$-algebras associated to
$\Lambda$ is precisely the collection of algebras $C(S^3_{00\theta})$, and hence precisely the
collection of algebras $C(S^3_{pq\theta})$ by \cite[Theorem~2.8]{BaumHajacEtAl:K-th05}.
\end{example}

\appendix
\section{Connections with cubical homology}\label{app:grandis}

In this section we show that each $k$-graph determines a cubical set $\widetilde{Q}(\Lambda)$ and that our homology is
isomorphic to that of $\widetilde{Q}(\Lambda)$ as defined by Grandis \cite{Grandis2005}. To define
$\widetilde{Q}(\Lambda)$ we must make sense of degeneracy maps and degenerate cubes in a $k$-graph (see
Definition~\ref{def:cubical} below), and avoiding this was one motivation for providing a self-contained approach in
Section~\ref{sec:homology} above. We could instead have made use of Khusainov's approach \cite{Khusainov:SMZ08} using
semicubical sets. This is in a sense more natural for $k$-graphs since it does not involve degeneracies: it is straightforward to show
that the collection $Q_*(\Lambda)$ of cubes in a $k$-graph forms a semicubical set. However, the sign convention for the
boundary maps in Khusainov's definition of homology differs from those of both Grandis and Massey \cite{Massey}.

Recall the following definition adapted from \cite[\S 1.2]{Grandis2005}. In order to avoid a clash of notation we use $f_i$ for
the degeneracy maps; we also use $1,0$ in place of $+,-$.

\begin{dfn} \label{def:cubical}
A \emph{cubical set}  is a triple $X = (X_r ,\partial_i^\ell , f_i )$ consisting of a sequence $(X_r)_{r = 0}^\infty$ of sets,
together with, for each $r \in \NN$, maps
\[
\partial_{i}^\ell : X_r \to X_{r-1}\quad  %= \partial_{i,r}^\ell
\text{ $\ell \in \{0,1\}$, $1 \le i \le r$}
\quad\text{ and }\quad  f_{i} : X_{r-1} \to X_r \quad\text{$1 \le i \le r$} %= f_{i,r}
\]
satisfying the \textit{cubical relations}
\begin{align}
\partial_i^\ell \partial_j^m &= \partial_j^m\partial_{i+1}^\ell  \text{ if } j \le i  , \label{eq:facerels} \\
f_i f_j &=  f_{i+1} f_j \ \text{ if } j \le i  , \label{eq:degenrels}
\\
\partial_i^\ell f_j &=
\begin{cases}
f_j \partial_{i-1}^\ell & \text{ if } j < i  ,\\
\operatorname{id} & \text{ if } j=i , \\
f_{j-1} \partial_i^\ell & \text{ if } j > i \label{eq:interels} .
\end{cases}
\end{align}
The maps $\partial^\ell_{i}$ are called \textit{faces} and the $f_{i}$ are called \textit{degeneracies}.
\end{dfn}

We now introduce the $k$-graph analog $\Cube{}$ of the model cocubical set $\mathbb{I}$ described in \cite[\S
1.2]{Grandis2005} (that is, an object satisfying conditions dual to those set out in Definition~\ref{def:cubical}).   Recall from Section~\ref{sec:prelims} that
 for $r \ge 1$, $\1_r= \sum_{i=1}^r e_i$ (and $\1_0 := 0 \in \NN^0$).
%recall that $\1_r= \sum_{i=1}^r e_i$. We define $\1_0 := 0 \in \NN^0$.
We define (see Examples \ref{ex:kgraphs}).
\[
\Cube{r} =
\begin{cases}
\Omega_{r, \mathbf{1_r}}& \textrm{if }\ r \ge 1; \\ %\{ (m,n) \in \Omega_r : n \le \mathbf{1} \}
\Omega_{0} &  \textrm{if }\ r  =  0.
\end{cases}
\]

For $\ell = 0,1$ define $\varepsilon^\ell _0 : \NN^0 \to \NN^1$ by $\varepsilon^\ell _0 (0) = \ell$. For
$1 \le i \le r+1$ and $\ell  \in \{0,1\}$ define $\varepsilon^\ell _i : \NN^{r}  \to \NN^{r+1}$  by
\[
\varepsilon^\ell _i(n_1, \dots, n_r) =
    (n_1, \dots, n_{i-1}, \ell  , n_i \dots, n_r).
\]

If $m \le n \le \mathbf{1}_r$ in $\NN^{r}$, then $\varepsilon^\ell _i (m) \le \varepsilon^\ell _i (n) \le \mathbf{1}_{r+1}$
in $\NN^{r+1}$; so we may extend $\varepsilon^\ell _i$ to a quasimorphism from $\Cube{r}$ to $\Cube{r+1}$ by setting
$\varepsilon^\ell _i(m,n) := (\varepsilon^\ell _i(m), \varepsilon^\ell_i(n))$.

Define $\eta_1 : \NN^1 \to \NN^0$ by $\eta_1 (n) = 0$ for all $n \in \NN$. For $r \ge 2$ and $1 \le i \le r$ we define
$\eta_i : \NN^r \to \NN^{r-1}$ by deleting the $i^\text{th}$ coordinate:
\[
\eta_i (n_1, \dots, n_r) := (n_1, \dots n_{i-1} , n_{i+1}, \dots n_r).
\]

If $m \le n \le \mathbf{1}_r$ in $\NN^{r}$, then $\eta_i (m) \le
\eta_i (n) \le \mathbf{1}_{r-1}$ in $\NN^{r-1}$; so $\eta_i$ extends to a quasimorphism from
$\Cube{r}$ to $\Cube{r-1}$ such that $\eta_i (m,n) = ( \eta_i (m) ,  \eta_i (n) )$.

\begin{prop}  \label{prop:modeliscubical}
The collection $\Cube{} = ( \Cube{n} , \varepsilon^\ell _i , \eta_i )$ forms a cocubical set.
\end{prop}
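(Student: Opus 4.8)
The plan is to verify directly that $\Cube{}$ satisfies the defining relations of a cocubical set, i.e.\ the relations dual to \eqref{eq:facerels}, \eqref{eq:degenrels} and \eqref{eq:interels}. Written out for the cofaces $\varepsilon^\ell_i$ and codegeneracies $\eta_i$, these are
\begin{align*}
\varepsilon^m_j \varepsilon^\ell_i &= \varepsilon^\ell_{i+1}\varepsilon^m_j \quad (j \le i), \qquad
    \eta_j \eta_i = \eta_i\eta_{j+1} \quad (i \le j),\\
\eta_j \varepsilon^\ell_i &=
\begin{cases}
\varepsilon^\ell_{i-1}\eta_j & j<i,\\
\id & j=i,\\
\varepsilon^\ell_i\eta_{j-1} & j>i,
\end{cases}
\end{align*}
for all admissible $i,j$ and all $\ell,m \in \{0,1\}$, together with the boundary instances involving the maps $\varepsilon^\ell_0 : \NN^0 \to \NN^1$ and $\eta_1 : \NN^1 \to \NN^0$, which are handled in the same way.

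First I would reduce each relation to an identity of maps in a single copy of $\NN^r$. Each structure map acts on a pair $(m,n) \in \Cube{r} = \Omega_{r,\1_r}$ coordinatewise, by $\varepsilon^\ell_i(m,n) = (\varepsilon^\ell_i(m),\varepsilon^\ell_i(n))$ and $\eta_i(m,n) = (\eta_i(m),\eta_i(n))$, and composites of quasimorphisms are computed by composing their underlying homomorphisms of $\NN^r$. Hence each displayed relation holds on $\Cube{r}$ if and only if the corresponding identity holds for the underlying maps $\NN^r \to \NN^{r'}$, applied separately to $m$ and to $n$. (That each $\varepsilon^\ell_i$ and $\eta_i$ is a well-defined quasimorphism $\Cube{r}\to\Cube{r\pm1}$, respecting the constraint $\,\cdot\le\1$, was already established in the paragraphs preceding the statement, so nothing beyond the relations remains.)

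Then I would verify each identity by tracking the effect on a coordinate vector $(n_1,\dots,n_r)$, using that $\varepsilon^\ell_i$ \emph{inserts} the entry $\ell$ in slot $i$ while $\eta_i$ \emph{deletes} slot $i$. For the coface--coface relation, inserting $\ell$ in slot $i$ and then $m$ in slot $j\le i$ pushes the freshly inserted $\ell$ into slot $i+1$, which is exactly the result of first inserting $m$ in slot $j$ and then $\ell$ in slot $i+1$. The codegeneracy--codegeneracy relation is the standard bookkeeping fact that deleting two slots is independent of order once the index shift from the first deletion is accounted for. For the mixed relation, inserting $\ell$ in slot $i$ and then deleting slot $j$ either cancels ($j=i$), removes a slot to the left of $\ell$ and so shifts $\ell$ down by one ($j<i$), or removes a slot to its right and leaves $\ell$ in place ($j>i$), giving the three cases; the boundary instances with $\varepsilon^\ell_0$ or $\eta_1$ are the degenerate versions of these.

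I expect the only genuine obstacle to be keeping the index shifts straight, and in particular the three-case mixed relation, where the position of the inserted coordinate relative to the deleted one dictates whether the index correction is on $\varepsilon$ or on $\eta$. Once the coordinatewise reduction above is in place, however, each case is a short and mechanical check, so the proof should amount to recording these elementary computations.
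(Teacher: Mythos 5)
Your proposal is correct and takes essentially the same approach as the paper: the paper's entire proof is the remark that ``it is routine but tedious to check that the duals of the relations \eqref{eq:facerels}, \eqref{eq:degenrels} and \eqref{eq:interels} hold,'' and your write-up simply carries out that check. The dualised relations you record and the insert/delete bookkeeping for the coordinatewise verification are all accurate.
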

\begin{proof}
It is routine but tedious to check that the duals of the relations \eqref{eq:facerels}, \eqref{eq:degenrels} and \eqref{eq:interels}
hold.
\end{proof}

Now we build a cubical set $\widetilde{Q} ( \Lambda )$ from a $k$-graph $\Lambda$ by considering collections of maps from
$\Cube{}$ into $\Lambda$: Given $t,r,k \in \NN$, a homomorphism $h : \NN^r \to \NN^k$ is called an \emph{admissible
map of rank $t$}, or just an \emph{admissible map}, if there exist $1 \le i_1 < \cdots < i_t \le r$ and $1 \le j_1 <
\cdots < j_t \le k$ such that
\begin{equation} \label{eq:admiss}
h(e_{i_p}) = e_{j_p}\text{ for $p \le t$}\qquad\text{ and }\qquad
h(e_i) = 0\text{ if $i \not\in \{i_1, \dots, i_t\}$.}
\end{equation}

Let $\Lambda$ be a $k$-graph and fix $r \in \NN$. A quasimorphism $\varphi : \Cube{r} \to \Lambda$ is said to be an
\emph{$r$-cube} if there is an admissible map $h : \NN^r \to \NN^k$ such that $d_\Lambda \circ \varphi = h \circ
d_{\Cube{r}}$.  We say that an $r$-cube $\varphi$ has rank $t$ if the associated admissible map has rank $t$. For $r \ge 0$ let
\[
\widetilde{Q}_r(\Lambda) = \{\varphi : \Cube{r} \to \Lambda : \text{ $\varphi$ is an $r$-cube} \}.
\]
For $1 \le i \le r+1$ and $\ell \in \{0,1\}$, define $\overline{\varepsilon}^\ell _i :
\widetilde{Q}_{r+1}(\Lambda) \to \widetilde{Q}_r(\Lambda)$ by
\begin{align*}
\overline{\varepsilon}^\ell_i(\varphi) &:= \varphi \circ \varepsilon^\ell _i
\intertext{and for $1 \le i \le r$, define
$\overline{\eta}_i : \widetilde{Q}_{r-1}(\Lambda) \to \widetilde{Q}_r (\Lambda)$
by}
 \overline{\eta}_i (\varphi) &:= \varphi \circ \eta_i.
\end{align*}
\begin{rmk}\label{rmk:admissible maps}
Let $\varphi$ be an $(r+1)$-cube of rank $t$ with admissible map $h : \NN^{r+1} \to \NN^k$ given as in
equation~\eqref{eq:admiss} above. If $j = i_p$ for some $p$, then $\overline{\varepsilon}^\ell _j(\varphi)$ is an $r$-cube
whose rank is $t-1$. Otherwise it is an $r$-cube of rank $t$. In either case, the associated admissible map $h' : \NN^r \to
\NN^k$ is given by
\begin{equation}  \label{eq:face admiss}
h'(e_i) = \begin{cases}
        e_{j_p} &\text{ if $i < j$ and $i  = i_p$ for some $p$} \\
        e_{j_p} &\text{ if $i \ge j$ and $i  = i_p - 1$ for some $p$} \\
        0 &\text{ otherwise.}
    \end{cases}
\end{equation}
So $h' ( t_1 , \ldots , t_{r-1} ) = h ( t_1 , \ldots, t_{j-1}, 0,  t_j, \ldots , t_{r-1})$.

Similarly, if $\varphi$ is an $r$-cube of rank $t$ with admissible map $h : \NN^r \to \NN^k$ given
in equation~\eqref{eq:admiss} above, then $\overline{\eta}_j(\varphi)$ is an $(r+1)$-cube of rank
$t$ whose admissible map is given by
 \begin{equation}  \label{eq:degeneracy admiss}
h''(e_i) = \begin{cases}
        e_{j_p} &\text{ if $i < j$ and $i  = i_p$ for some $p$} \\
        e_{j_p} &\text{ if $i > j$ and $i  = i_p + 1$ for some $p$} \\
        0 &\text{ otherwise.}
    \end{cases}
\end{equation}
So $h''( t_1 , \ldots , t_{r+1} ) = h ( t_1 , \ldots, t_{j-1}, t_{j+1} , \ldots , t_{r+1})$.
\end{rmk}

\begin{thm} \label{thm:lambdaiscubical}
Let $\Lambda$ be a $k$-graph. Then $\widetilde{Q} ( \Lambda ) = \big( \widetilde{Q}_r ( \Lambda \big)  ,
\overline{\varepsilon}^\ell _i , \overline{\eta}_i )$ is a cubical set.
\end{thm}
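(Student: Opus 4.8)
The plan is to deduce the three families of cubical relations for $\widetilde{Q}(\Lambda)$ directly from the cocubical relations for $\Cube{}$ established in Proposition~\ref{prop:modeliscubical}, exploiting the fact that both the faces $\overline{\varepsilon}^\ell_i$ and the degeneracies $\overline{\eta}_i$ are defined by precomposition with the cofaces $\varepsilon^\ell_i$ and codegeneracies $\eta_i$ of the model object. Since precomposition is contravariant, each identity among the $\varepsilon^\ell_i$ and $\eta_i$ will produce, after reversing the order of composition, exactly one of the required cubical relations.

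First I would check that the operations are well defined as maps between the correct sets: $\overline{\varepsilon}^\ell_i$ must carry $\widetilde{Q}_{r+1}(\Lambda)$ into $\widetilde{Q}_r(\Lambda)$, and $\overline{\eta}_i$ must carry $\widetilde{Q}_{r-1}(\Lambda)$ into $\widetilde{Q}_r(\Lambda)$. This is precisely the content of Remark~\ref{rmk:admissible maps}: since $\varepsilon^\ell_i$ and $\eta_i$ are quasimorphisms (Proposition~\ref{prop:modeliscubical}), the composites $\varphi \circ \varepsilon^\ell_i$ and $\varphi \circ \eta_i$ of an $r$-cube $\varphi$ with them are again quasimorphisms, and the explicit formulas for the resulting admissible maps $h'$ and $h''$ in~\eqref{eq:face admiss} and~\eqref{eq:degeneracy admiss} exhibit that admissibility is preserved. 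Hence $\overline{\varepsilon}^\ell_i(\varphi)$ and $\overline{\eta}_i(\varphi)$ are genuine cubes of the expected dimension.

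The core of the argument is then a formal dualisation. For $j \le i$, the coface relation on $\Cube{}$ reads $\varepsilon^m_j \circ \varepsilon^\ell_i = \varepsilon^\ell_{i+1} \circ \varepsilon^m_j$, so for every $(r+1)$-cube $\varphi$ we obtain
\[
\overline{\varepsilon}^\ell_i \overline{\varepsilon}^m_j (\varphi)
  = \varphi \circ \varepsilon^m_j \circ \varepsilon^\ell_i
  = \varphi \circ \varepsilon^\ell_{i+1} \circ \varepsilon^m_j
  = \overline{\varepsilon}^m_j \overline{\varepsilon}^\ell_{i+1}(\varphi),
\]
which is exactly the face relation~\eqref{eq:facerels}. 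The degeneracy relation~\eqref{eq:degenrels} follows in the identical fashion from the dual codegeneracy relation $\eta_j \circ \eta_i = \eta_j \circ \eta_{i+1}$ (for $j \le i$), and the mixed relation~\eqref{eq:interels} follows by precomposing the three dual identities $\eta_j \circ \varepsilon^\ell_i = \varepsilon^\ell_{i-1} \circ \eta_j$ (for $j < i$), $\eta_i \circ \varepsilon^\ell_i = \id$ (for $j = i$), and $\eta_j \circ \varepsilon^\ell_i = \varepsilon^\ell_i \circ \eta_{j-1}$ (for $j > i$), all of which are supplied by Proposition~\ref{prop:modeliscubical}.

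I do not expect any genuine obstacle here: the theorem is essentially a formal consequence of the cocubical structure of $\Cube{}$ together with well-definedness. The only point demanding care is the index bookkeeping in the dualisation — one must confirm that contravariance of precomposition turns the dual relation with indices $(i, j)$ into the asserted cubical relation with the shifted indices $(i{+}1)$, $(i{-}1)$, $(j{-}1)$ appearing in~\eqref{eq:facerels}--\eqref{eq:interels}, rather than some mismatched variant. Verifying this shift explicitly in each of the three cases, as indicated above, completes the proof.
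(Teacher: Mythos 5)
Your overall strategy is exactly the paper's: the published proof of Theorem~\ref{thm:lambdaiscubical} is the single sentence ``This follows from Proposition~\ref{prop:modeliscubical}'', and what you have written is precisely the expansion of that sentence --- well-definedness via Remark~\ref{rmk:admissible maps}, then contravariant transport of the cocubical identities of $\Cube{}$ along precomposition. Your treatment of the face relation~\eqref{eq:facerels} and of the three mixed relations~\eqref{eq:interels} is correct, including the index shifts.

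There is, however, one step that fails as written: the ``dual codegeneracy relation'' $\eta_j \circ \eta_i = \eta_j \circ \eta_{i+1}$ for $j \le i$ is false. For instance on $\NN^3$ one has $(\eta_1 \circ \eta_2)(n_1,n_2,n_3) = n_3$ but $(\eta_1 \circ \eta_3)(n_1,n_2,n_3) = n_2$, since the two composites delete the coordinate pairs $\{1,2\}$ and $\{1,3\}$ respectively. The identity that actually holds among the codegeneracies is $\eta_i \circ \eta_j = \eta_j \circ \eta_{i+1}$ for $j \le i$ (equivalently $\eta_j \circ \eta_i = \eta_{i-1} \circ \eta_j$ for $j < i$), and precomposing with it yields $\overline{\eta}_j\,\overline{\eta}_i = \overline{\eta}_{i+1}\,\overline{\eta}_j$ for $j \le i$, which is the degeneracy relation in its standard form (Grandis's $e_i e_j = e_{j+1} e_i$ for $i \le j$). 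Note that the left-hand side of~\eqref{eq:degenrels} as printed in the paper, $f_i f_j = f_{i+1} f_j$, carries the same transposition of $i$ and $j$ (it fails already for singular cubes of a space when $j < i$), so you have faithfully dualised what appears to be a typo in Definition~\ref{def:cubical} rather than the intended relation. The fix is purely local --- replace your claimed identity by $\eta_i \circ \eta_j = \eta_j \circ \eta_{i+1}$ and read off $\overline{\eta}_j\,\overline{\eta}_i = \overline{\eta}_{i+1}\,\overline{\eta}_j$ --- and with that correction your argument is complete.
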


\begin{proof}
This follows from Proposition \ref{prop:modeliscubical}.
\end{proof}

In \cite[\S 2.1]{Grandis2005} the homology of a cubical set is defined as follows: Let $X = ( X_r,
\partial^\ell_i , f_i )$ be a cubical set, then for $n \ge 1$ we define
\[\textstyle
\operatorname{Deg}_r (X) = \bigcup_{i=1}^r \image ( f_i : X_{r-1} \to X_r ) \subseteq X_r
\]
and set $\operatorname{Deg}_0 (X) = \emptyset$. The (normalised) chain complex $(C_* (X) , \partial_*)$ is defined by
\begin{align*}
C_r (X) &= \ZZ X_r / \ZZ \operatorname{Deg}_r (X) = \ZZ \overline{X}_r \text{ where } \overline{X}_r
                = X_r \backslash \operatorname{Deg}_r (X)  \\
\partial_r ( x ) &= \sum_{i,\ell} (-1)^{i+\ell} \,\partial^\ell_i x  \quad \text{ where }x \in \overline{X}_r.
\end{align*}
The homology of $X$ is then the homology of the complex $(C_* ( X) , \partial_*)$, so that
\[
H_r (X) = \ker \partial_r / \image \partial_{r+1} .
\]

An $r$-cube $\varphi : \Cube{r} \to \Lambda$ is called \emph{degenerate} if its rank is strictly less than $r$. Otherwise it is
said to be \emph{nondegenerate}. We define
\begin{align*}
\overline{Q}_r ( \Lambda )  &= \{\varphi : \Cube{r} \to \Lambda : \text{ $\varphi$ is a nondegenerate $r$-cube}\} \\
D_r ( \Lambda ) &= \{\varphi : \Cube{r} \to \Lambda : \text{ $\varphi$ is a degenerate $r$-cube}\},
\end{align*}
so $\widetilde{Q}_r(\Lambda) = \overline{Q}_r (\Lambda) \sqcup D_r(\Lambda)$.

\begin{lem} \label{lem:degen}
Let $\Lambda$ be a $k$-graph. Then
\begin{enumerate}
\item for $1 \le i \le r$ and $\ell = 0,1$, $\overline{\varepsilon}^\ell _i : \widetilde{Q}_{r+1} ( \Lambda ) \to
    \widetilde{Q}_{r} ( \Lambda )$ preserves nondegenerate cubes, that is for  $\varphi \in \overline{Q}_{r+1} (\Lambda )$
    we have $\overline{\varepsilon}^\ell _i ( \varphi ) \in \overline{Q}_{r} ( \Lambda)$;
\item for $1 \le i \le r$ and any $\varphi \in \widetilde{Q}_{r-1} ( \Lambda )$ we have
    $\overline{\eta}_i ( \varphi ) \in D_r ( \Lambda )$;
\item for all $r \ge 1$ we have $D_r ( \Lambda ) = \bigcup^{r}_{i=1}
    \overline{\eta}_i \big(\widetilde{Q}_{r-1} ( \Lambda )\big)$.
\end{enumerate}
\end{lem}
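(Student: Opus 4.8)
The plan is to derive all three parts from the rank bookkeeping already recorded in Remark~\ref{rmk:admissible maps}, with the only substantive work occurring in one inclusion of part~(3). For (1), a nondegenerate $(r+1)$-cube $\varphi$ has rank $r+1$, so its admissible map $h$ satisfies $\{i_1,\dots,i_{r+1}\}=\{1,\dots,r+1\}$; hence every index $i$ coincides with some $i_p$, and Remark~\ref{rmk:admissible maps} gives that $\overline{\varepsilon}^\ell_i(\varphi)$ is an $r$-cube of rank $(r+1)-1=r$, that is, a nondegenerate $r$-cube. For (2), any $\varphi\in\widetilde{Q}_{r-1}(\Lambda)$ has rank at most $r-1$, and Remark~\ref{rmk:admissible maps} says $\overline{\eta}_i(\varphi)$ is an $r$-cube of the same rank $\le r-1 < r$, so it is degenerate and lies in $D_r(\Lambda)$.

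For (3) the inclusion $\bigcup_{i=1}^r \overline{\eta}_i(\widetilde{Q}_{r-1}(\Lambda)) \subseteq D_r(\Lambda)$ is exactly part~(2), so the content is the reverse inclusion. Given a degenerate $r$-cube $\varphi$ with admissible map $h$ of rank $t<r$, I would first pick an index $i\in\{1,\dots,r\}$ outside $\{i_1,\dots,i_t\}$, so that $h(e_i)=0$; such an $i$ exists precisely because $t<r$. I then set $\psi := \overline{\varepsilon}^0_i(\varphi)\in\widetilde{Q}_{r-1}(\Lambda)$ and claim that $\overline{\eta}_i(\psi)=\varphi$, which exhibits $\varphi\in\overline{\eta}_i(\widetilde{Q}_{r-1}(\Lambda))$ and closes the argument. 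Since $\overline{\eta}_i(\psi)=\varphi\circ\varepsilon^0_i\circ\eta_i$, and the composite $\varepsilon^0_i\circ\eta_i\colon\Cube{r}\to\Cube{r}$ sends $(p,q)$ to the pair $(\tilde p,\tilde q)$ obtained by resetting the $i$-th coordinates of $p$ and $q$ to $0$, the claim reduces to showing that $\varphi$ takes the same value on $(p,q)$ and on $(\tilde p,\tilde q)$ for every $(p,q)\in\Cube{r}$.

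The key step, and the only non-formal one, is this constancy statement, which I would prove from the factorisation property together with $h(e_i)=0$. The underlying observation is that any morphism of $\Cube{r}=\Omega_{r,\mathbf{1}_r}$ whose degree is a multiple of $e_i$ (hence $0$ or $e_i$, since all coordinates are bounded by $\mathbf{1}_r$) is sent by $\varphi$ to a morphism of degree $h(e_i)=0$, i.e.\ to a vertex. Factoring $(\tilde p,q)=(\tilde p,p)(p,q)$ and $(\tilde p,q)=(\tilde p,\tilde q)(\tilde q,q)$ in $\Omega_{r,\mathbf{1}_r}$ (the conditions $\tilde p\le p\le q$ and $\tilde p\le\tilde q\le q$ hold because each factorisation only alters coordinate $i$), the factors $(\tilde p,p)$ and $(\tilde q,q)$ have degree $0$ or $e_i$ and so map under $\varphi$ to identity morphisms; applying $\varphi$ and cancelling these identities on the appropriate side yields $\varphi(p,q)=\varphi(\tilde p,q)=\varphi(\tilde p,\tilde q)$, which is the desired equality. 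I expect the main obstacle to be purely the bookkeeping in this factorisation argument—tracking which identity is cancelled on which side, and checking the two factorisations are legitimate in $\Omega_{r,\mathbf{1}_r}$—rather than any conceptual difficulty.
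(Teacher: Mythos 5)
Your proof is correct and follows essentially the same route as the paper's: parts (1) and (2) are the same rank bookkeeping, and for part (3) you choose $i$ with $h(e_i)=0$ and exhibit $\varphi=\overline{\eta}_i\,\overline{\varepsilon}^0_i(\varphi)$ exactly as the paper does. The only difference is that you spell out, via the factorisations $(\tilde p,q)=(\tilde p,p)(p,q)=(\tilde p,\tilde q)(\tilde q,q)$ and the fact that $\varphi$ sends degree-$e_i$ morphisms to identities, the step the paper compresses into the phrase ``since $\varphi$ does not depend on the $i$\textsuperscript{th} coordinate,'' and your justification of that step is valid.
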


\begin{proof}
For (1), suppose that $\varphi : \Cube{r+1} \to \Lambda$ has rank $r+1$.  Then $\overline{\varepsilon}^\ell _i (\varphi) :
\Cube{r} \to \Lambda$ has rank $r$; so $\overline{\varepsilon}^\ell _i (\varphi) \in \overline{Q}_{r} ( \Lambda)$.

For (2), suppose that $\varphi : \Cube{r-1} \to \Lambda $ has rank $t \le r-1$.  Then $\overline{\eta}_i (\varphi) : \Cube{r}
\to \Lambda$ has rank $t < r$; so $\overline{\eta}_i (\varphi) \in D_r (\Lambda)$.

For (3), suppose that $\varphi \in D_r ( \Lambda )$, that is $\varphi : \Cube{r} \to \Lambda$ has
rank $t < r$. Then there is an admissible map  $h : \NN^r \to \NN^k$ of rank $t$  such that
$d_\Lambda \circ \varphi = h \circ d_{\Cube{r}}$. Let $1 \le i \le r$ be such that $h ( e_i ) = 0$.
Since $\varphi$ does not depend on the $i^\text{th}$ coordinate, we have
$\varphi = \overline{\eta}_i \overline{\varepsilon}_i^0 ( \varphi )$;
hence, $\varphi =  \overline{\eta}_i (\varphi')$ where
$\varphi' = \overline{\varepsilon}_i^0(\varphi)  \in \widetilde{Q}_{r-1}(\Lambda)$.
\end{proof}

Grandis builds his directed homology from the complex given in the following lemma (see \cite[\S 2.1]{Grandis2005}).

\begin{lem}
Let $\Lambda$ be a $k$-graph. Let
\begin{align}
\overline{C}_r ( \Lambda ) &= \ZZ \overline{Q}_r (\Lambda) \nonumber \\
\overline{\partial}_r ( \lambda ) &= \sum_{\ell = 0}^1 \sum_{i=1}^r
(-1)^{i+\ell} \, \overline{\varepsilon}^\ell _i ( \lambda )
\quad \lambda \in \overline{Q}_r ( \Lambda ) \label{eq:newboundary}
\end{align}
Then $(\overline{C} (\Lambda)_* , \overline{\partial}_*)$ is a chain complex.
\end{lem}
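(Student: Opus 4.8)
The plan is to verify the two defining properties of a chain complex: that each $\overline{\partial}_r$ is a well-defined group homomorphism $\overline{C}_r(\Lambda) \to \overline{C}_{r-1}(\Lambda)$, and that $\overline{\partial}_{r-1} \circ \overline{\partial}_r = 0$. The whole argument is the standard cubical computation, now phrased for the face maps $\overline{\varepsilon}^\ell_i$ of $\widetilde{Q}(\Lambda)$.

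First I would settle well-definedness. Since $\overline{C}_r(\Lambda) = \ZZ\overline{Q}_r(\Lambda)$ is free on $\overline{Q}_r(\Lambda)$, it suffices to define $\overline{\partial}_r$ on each nondegenerate $r$-cube $\lambda$ by formula~\eqref{eq:newboundary} and extend $\ZZ$-linearly. The only thing to check is that the right-hand side of~\eqref{eq:newboundary} genuinely lies in $\overline{C}_{r-1}(\Lambda) = \ZZ\overline{Q}_{r-1}(\Lambda)$, i.e.\ that each face $\overline{\varepsilon}^\ell_i(\lambda)$ is again nondegenerate. This is exactly Lemma~\ref{lem:degen}(1), which gives $\overline{\varepsilon}^\ell_i(\overline{Q}_r(\Lambda)) \subseteq \overline{Q}_{r-1}(\Lambda)$. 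Hence $\overline{\partial}_r$ is a well-defined homomorphism.

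For the relation $\overline{\partial}_{r-1}\overline{\partial}_r = 0$, I would use that, by Theorem~\ref{thm:lambdaiscubical}, $\widetilde{Q}(\Lambda)$ is a cubical set, so its faces satisfy~\eqref{eq:facerels}; after matching indices this reads $\overline{\varepsilon}^m_j \overline{\varepsilon}^\ell_i = \overline{\varepsilon}^\ell_i \overline{\varepsilon}^m_{j+1}$ whenever $i \le j$. Expanding for $\lambda \in \overline{Q}_r(\Lambda)$ gives $\overline{\partial}_{r-1}\overline{\partial}_r(\lambda) = \sum (-1)^{i+j+\ell+m}\, \overline{\varepsilon}^m_j \overline{\varepsilon}^\ell_i(\lambda)$, the sum running over $\ell,m \in \{0,1\}$, $1 \le i \le r$ and $1 \le j \le r-1$. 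I would split this double sum according to $i \le j$ versus $j < i$. On the terms with $i \le j$ I substitute the cubical relation and reindex via $(i,j) \mapsto (j+1, i)$, which is a bijection onto the index set $\{j < i\}$; a short check shows this reindexing sends the sign $(-1)^{i+j+\ell+m}$ to the negative of the sign attached to the corresponding $j < i$ term. The two families therefore cancel in pairs, so the whole sum vanishes.

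The only genuine content is the well-definedness step, where it is essential that taking a face of a nondegenerate cube never produces a degenerate one — precisely the role of Lemma~\ref{lem:degen}(1); the sign bookkeeping in the reindexing is the familiar one from cubical homology and presents no real obstacle. This lemma is the direct analog, in the cubical-set setting, of the earlier fact that $(C_*(\Lambda), \partial_*)$ is a chain complex via Remark~\ref{rmk:2faces}.
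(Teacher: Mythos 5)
Your proposal is correct and follows the paper's own proof exactly: well-definedness of $\overline{\partial}_r$ via Lemma~\ref{lem:degen}(1), and $\overline{\partial}_{r-1}\circ\overline{\partial}_r = 0$ via the cubical relation~\eqref{eq:facerels} guaranteed by Theorem~\ref{thm:lambdaiscubical}. The paper simply leaves the sign/reindexing computation implicit, whereas you spell it out; the substance is identical.
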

\begin{proof}
Theorem~\ref{thm:lambdaiscubical} implies that $\widetilde{Q} ( \Lambda ) = ( \widetilde{Q}_r (\Lambda ) ,
\overline{\varepsilon}^\ell _i , \overline{\eta}_i )$ is a cubical set. By Lemma~\ref{lem:degen} (1) we see that
$\overline{\varepsilon}^\ell _i\big(\overline{Q}_r(\Lambda)\big) \subset \overline{Q}_{r-1} (\Lambda)$ and so
$\overline{\partial}_r$ is well defined. That $\overline{\partial}_{r} \circ \overline{\partial}_{r+1} = 0$ follows from the
property \eqref{eq:facerels} of $\overline{\varepsilon}^\ell _i$. Hence,  $( \overline{C}_* (\Lambda) , \overline{\partial}_*)$
is a complex.
\end{proof}

Our aim is to show that the homology $\overline{H}_* ( \Lambda )$ defined by the complex $(\overline{C}_* ( \Lambda),
\overline{\partial}_*)$ is the same as the homology of the complex $( C_* ( \Lambda ) , \partial_* )$ described in \S 1. We do
this in Theorem~\ref{thm:complexiso} by showing that the complexes are isomorphic. Recall the definition of $Q_r(\Lambda)$
given in~\S2:
\[
Q_r (\Lambda) = \{\lambda \in \Lambda : d (\lambda) \le \mathbf{1}_k , |d(\lambda)| = r\} .
\]

\begin{lem} \label{lem:qbij}
Let $\Lambda$ be a $k$-graph. For $r \ge 0$ and $\lambda \in Q_r (\Lambda)$ there is a unique $\varphi_\lambda \in
\overline{Q}_r (\Lambda)$ such that $\varphi_\lambda(0,\1_{r}) = \lambda$. Conversely, given $\varphi \in \overline{Q}_r
(\Lambda)$, the path $\lambda = \varphi(0,\1_{r}) \in Q_r(\Lambda)$ satisfies  $\varphi_{\lambda} = \varphi$. The map
$\lambda \mapsto \varphi_\lambda$ is a bijection from $Q_r ( \Lambda )$ to $\overline{Q}_r(\Lambda)$ with inverse
$\varphi \mapsto\varphi(0,\1_{r})$.
\end{lem}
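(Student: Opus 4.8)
The plan is to set up explicit, mutually inverse assignments and verify that each lands in the right set. First I would unpack what membership in $\overline{Q}_r(\Lambda)$ means: the admissible map $h:\NN^r\to\NN^k$ associated to a nondegenerate $r$-cube $\varphi$ has rank $r$, so in the notation of~\eqref{eq:admiss} the indices $i_1<\cdots<i_r$ exhaust $\{1,\dots,r\}$, whence $h(e_p)=e_{j_p}$ for some $1\le j_1<\cdots<j_r\le k$. Applying the defining relation $d_\Lambda\circ\varphi=h\circ d_{\Cube{r}}$ to the top morphism gives $d(\varphi(0,\1_r))=h(\1_r)=\sum_{p=1}^r e_{j_p}$, which satisfies $\sum_{p=1}^r e_{j_p}\le\1_k$ and $|\sum_{p=1}^r e_{j_p}|=r$; hence $\lambda:=\varphi(0,\1_r)\in Q_r(\Lambda)$ as required. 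Moreover $h$ is recovered from $\lambda$, since $\{j_1<\cdots<j_r\}$ is exactly the support of $d(\lambda)$.

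Next, for uniqueness and the reconstruction formula, I would exploit the factorisation in $\Cube{r}=\Omega_{r,\1_r}$: for any $(p,q)$ we have $(0,\1_r)=(0,p)(p,q)(q,\1_r)$, so applying the functor $\varphi$ yields $\lambda=\varphi(0,p)\,\varphi(p,q)\,\varphi(q,\1_r)$, in which the three factors have degrees $h(p)$, $h(q)-h(p)$ and $d(\lambda)-h(q)$ respectively. By the uniqueness clause of the factorisation property the middle factor is forced, namely $\varphi(p,q)=\lambda(h(p),h(q))$. Thus every $\varphi\in\overline{Q}_r(\Lambda)$ is completely determined by $\lambda=\varphi(0,\1_r)$; this gives uniqueness in the first assertion and, combined with the previous paragraph, the ``conversely'' statement $\varphi_\lambda=\varphi$.

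For existence I would run this backwards. Given $\lambda\in Q_r(\Lambda)$, write $d(\lambda)=e_{j_1}+\cdots+e_{j_r}$ with $j_1<\cdots<j_r$, let $h$ be the rank-$r$ admissible map $h(e_p)=e_{j_p}$, and define $\varphi_\lambda(p,q):=\lambda(h(p),h(q))$ for $(p,q)\in\Cube{r}$. This is well defined because $p\le q\le\1_r$ forces $0\le h(p)\le h(q)\le h(\1_r)=d(\lambda)$ by monotonicity of $h$, so the slice is legitimate. It is a functor because $\lambda(a,b)\lambda(b,c)=\lambda(a,c)$ whenever $0\le a\le b\le c\le d(\lambda)$, and it sends each object $(p,p)$ to the vertex $s(\lambda(0,h(p)))$. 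It satisfies $d_\Lambda(\varphi_\lambda(p,q))=h(q)-h(p)=h(d_{\Cube{r}}(p,q))$, so it is an $r$-cube with admissible map $h$ of rank $r$, i.e.\ $\varphi_\lambda\in\overline{Q}_r(\Lambda)$; and $\varphi_\lambda(0,\1_r)=\lambda(0,d(\lambda))=\lambda$. Combining the two directions shows that $\lambda\mapsto\varphi_\lambda$ and $\varphi\mapsto\varphi(0,\1_r)$ are mutually inverse, which is the final assertion.

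I expect the only genuinely fiddly step to be checking that $\varphi_\lambda$ is an honest $k$-graph quasimorphism, i.e.\ degree-compatible and compatible with composition; this rests entirely on the slice-composition identity $\lambda(a,b)\lambda(b,c)=\lambda(a,c)$, which is immediate from the factorisation property, so I do not anticipate a real obstacle here.
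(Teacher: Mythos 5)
Your proposal is correct and follows essentially the same route as the paper: both define $\varphi_\lambda(m,n)=\lambda(h(m),h(n))$ for the admissible map $h$ determined by the support of $d(\lambda)$, and both derive uniqueness from the factorisation property (your slice argument $\varphi(p,q)=\lambda(h(p),h(q))$ just spells out the step the paper summarises in one sentence). The extra verifications you flag (functoriality, degree-compatibility, the trivial $r=0$ case) are all routine and match the paper's treatment.
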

\begin{proof}
The result is trivial when $r = 0$ because $\Cube{0} = \{0\}$.

Fix $r \ge 1$ and $\lambda \in Q_r ( \Lambda )$. Let $d ( \lambda ) = e_{i_1} + \cdots + e_{i_r}$, and define an admissible
map $h : \NN^r \to \NN^k$  by $h ( e_j ) = e_{i_j}$ for $j=1, \ldots, r$. Define $\varphi_\lambda : \Cube{r} \to
\Lambda$ by
\[
\varphi_\lambda (m, n) = \lambda(h(m) , h(n)) %%\text{ where } (m,n) \in \Cube{r}.
\]
Then $\varphi_\lambda : \Cube{r} \to \Lambda$ is a nondegenerate $r$-cube with $\varphi_\lambda(0,\1_{r}) = \lambda$.
The factorisation property ensures that there is only one nondegenerate cube with range $\lambda$.

Now fix $\varphi \in \overline{Q}_r ( \Lambda )$. Suppose that $d ( \varphi (0,\1_{r}) ) = e_{i_1} + \cdots + e_{i_r}$
with $1 \le i_1 < \cdots < i_r  \le k$.  Let $\lambda = \varphi(0,\1_{r})$ and define $h : \NN^r \to \NN^k$ by $h (e_j) =
e_{i_j}$. Then for $(m,n) \in \Cube{r}$ we have
\[
\varphi_{\lambda} (m,n) = \lambda( h( m ) , h( n ) ) = \varphi ( m , n );
\]
so $\varphi_{\lambda} = \varphi$ as required.
\end{proof}

Recall from Section~\ref{sec:prelims} that for $\lambda \in Q_r ( \Lambda )$, if we express
$d(\lambda)  = e_{i_1} + \cdots + e_{i_r}$ with $1 \le i_1 < \cdots < i_r  \le k$, then
\[
F^0_j(\lambda) = \lambda (0 , d (\lambda) - e_{i_j}) \text{ and } F^1_j (\lambda) = \lambda ( e_{i_j}, d(\lambda)).
\]

\begin{lem} \label{lem:varepisflambda}
Let $\Lambda$ be a $k$-graph and $r \ge 1$. Then for $\lambda \in Q_r ( \Lambda )$ we have
\begin{equation} \label{eq:partialisF}
\overline{\varepsilon}^\ell _j ( \varphi_\lambda ) (0, \mathbf{1}_{r-1}) = F^\ell _j ( \lambda )
\text{ in } Q_{r-1} ( \Lambda ) .
\end{equation}
\end{lem}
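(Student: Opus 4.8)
The plan is to prove \eqref{eq:partialisF} by a direct unwinding of the definitions, since everything reduces to tracking how the cocubical face map $\varepsilon^\ell_j$ acts on the two endpoints $0$ and $\1_{r-1}$ of the generating morphism of $\Cube{r-1}$ and then transporting the result through the admissible map $h$ attached to $\lambda$. First I would fix $\lambda \in Q_r(\Lambda)$, write $d(\lambda) = e_{i_1} + \cdots + e_{i_r}$ with $i_1 < \cdots < i_r$, and recall from the construction in Lemma~\ref{lem:qbij} that $\varphi_\lambda(m,n) = \lambda(h(m), h(n))$, where $h : \NN^r \to \NN^k$ is the admissible map determined by $h(e_p) = e_{i_p}$ for $1 \le p \le r$ (so in particular $h(\1_r) = d(\lambda)$).

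Next I would use that $\overline{\varepsilon}^\ell_j(\varphi_\lambda) = \varphi_\lambda \circ \varepsilon^\ell_j$, and evaluate at the morphism $(0,\1_{r-1})$ of $\Cube{r-1}$. Since $\varepsilon^\ell_j$ inserts $\ell$ in the $j^{\text{th}}$ coordinate, applied to the endpoints it gives
\[
\varepsilon^\ell_j(0) = \ell e_j \quad\text{ and }\quad \varepsilon^\ell_j(\1_{r-1}) = \1_r - (1-\ell)e_j
\]
in $\NN^r$, so that $\overline{\varepsilon}^\ell_j(\varphi_\lambda)(0,\1_{r-1}) = \varphi_\lambda\big(\ell e_j,\, \1_r - (1-\ell)e_j\big)$. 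Applying $h$ and using $h(e_j) = e_{i_j}$ together with $h(\1_r) = d(\lambda)$ yields $\lambda\big(\ell e_{i_j},\, d(\lambda) - (1-\ell)e_{i_j}\big)$.

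Finally I would split into the two cases $\ell = 0$ and $\ell = 1$ and compare against the formulas for the faces recalled immediately before the lemma. For $\ell = 0$ the expression becomes $\lambda(0, d(\lambda) - e_{i_j}) = F^0_j(\lambda)$, and for $\ell = 1$ it becomes $\lambda(e_{i_j}, d(\lambda)) = F^1_j(\lambda)$, which is exactly \eqref{eq:partialisF}. There is no genuine obstacle here; the only point requiring care is the bookkeeping of the coordinate-insertion $\varepsilon^\ell_j$ at the two endpoints and the fact that $h$ sends $\1_r$ to $d(\lambda)$ and $e_j$ to $e_{i_j}$, so that the single omitted (respectively inserted) coordinate in $\Cube{r-1}$ matches the single direction $e_{i_j}$ along which the face $F^\ell_j$ of $\lambda$ is taken.
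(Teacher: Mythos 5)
Your proposal is correct and follows essentially the same route as the paper: unwind $\overline{\varepsilon}^\ell_j(\varphi_\lambda)(0,\1_{r-1}) = \varphi_\lambda(\varepsilon^\ell_j(0), \varepsilon^\ell_j(\1_{r-1})) = \lambda(h(\varepsilon^\ell_j(0)), h(\varepsilon^\ell_j(\1_{r-1})))$ and split into the cases $\ell = 0,1$ to recover the formulas $F^0_j(\lambda) = \lambda(0, d(\lambda)-e_{i_j})$ and $F^1_j(\lambda) = \lambda(e_{i_j}, d(\lambda))$. Your explicit bookkeeping of $\varepsilon^\ell_j(0) = \ell e_j$ and $\varepsilon^\ell_j(\1_{r-1}) = \1_r - (1-\ell)e_j$ is just a slightly more detailed rendering of the same computation.
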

\begin{proof}
Let $d ( \lambda ) = e_{i_1} + \cdots + e_{i_r}$ and define $h : \NN^r \to \NN^k$ by $h (e_j) = e_{i_j}$ for $j \le r$.
Then
\begin{flalign*} &&\overline{\varepsilon}^\ell _j ( \varphi_\lambda ) (0, \mathbf{1}_{r-1})
   &=\varphi_\lambda ( \varepsilon^\ell _j (0) , \varepsilon^\ell _j ( \mathbf{1}_{r-1} ) ) & \\
   &&&= \lambda (h( \varepsilon^\ell _j (0)) , h ( \varepsilon^\ell _j (\mathbf{1}_{r-1}) ) ) & \\
   &&&= \begin{cases} \lambda ( 0 , d(\lambda) - e_{i_j} ) & \text{ if } \ell = 0 \\
                      \lambda ( e_{i_j}, d ( \lambda ) ) & \text{ if } \ell  =1
        \end{cases} & \\
   &&&= F^\ell _j ( \lambda ). &\qedhere
\end{flalign*}
\end{proof}

\begin{thm} \label{thm:complexiso}
Let $\Lambda$ be a $k$-graph then the bijection of Lemma~\ref{lem:qbij} induces an isomorphism of complexes
$(C_*(\Lambda) , \partial_*) \cong (\overline{C}_* (\Lambda) , \overline{\partial}_*)$. Hence $\overline{H}_*(\Lambda)
\cong H_*(\Lambda)$.
\end{thm}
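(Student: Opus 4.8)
The plan is to extend the bijection $\lambda \mapsto \varphi_\lambda$ of Lemma~\ref{lem:qbij} linearly to a family of group isomorphisms $\Theta_r : C_r(\Lambda) \to \overline{C}_r(\Lambda)$, to verify that these intertwine the boundary maps $\partial_*$ and $\overline{\partial}_*$, and then to invoke functoriality of homology. First I would observe that, since $\lambda \mapsto \varphi_\lambda$ is a bijection between the free generating sets $Q_r(\Lambda)$ and $\overline{Q}_r(\Lambda)$, Remark~\ref{rmk:F.A.G. maps} yields a unique extension to a group isomorphism $\Theta_r : C_r(\Lambda) \to \overline{C}_r(\Lambda)$, whose inverse is determined on generators by $\varphi \mapsto \varphi(0,\1_r)$.

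The crucial step is to show that $\Theta_{r-1} \circ \partial_r = \overline{\partial}_r \circ \Theta_r$ for each $r$. Since both composites are homomorphisms, it suffices to check the identity on each generator $\lambda \in Q_r(\Lambda)$. Comparing the defining formula~\eqref{eq:bondarydef} for $\partial_r$ with the formula~\eqref{eq:newboundary} for $\overline{\partial}_r$ --- both being alternating sums over $\ell \in \{0,1\}$ and $1 \le j \le r$ carrying identical signs $(-1)^{j+\ell}$ --- it is enough to verify the single term-by-term identity
\[
\Theta_{r-1}\big(F_j^\ell(\lambda)\big) = \overline{\varepsilon}_j^\ell(\varphi_\lambda), \quad\text{i.e.}\quad \varphi_{F_j^\ell(\lambda)} = \overline{\varepsilon}_j^\ell(\varphi_\lambda),
\]
for all $j$ and $\ell$. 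This is exactly where the preparatory lemmas do their work: Lemma~\ref{lem:degen}(1) guarantees that $\overline{\varepsilon}_j^\ell(\varphi_\lambda)$ is again a nondegenerate cube, hence lies in the domain of the inverse bijection; applying that inverse produces $\overline{\varepsilon}_j^\ell(\varphi_\lambda)(0,\1_{r-1})$, which equals $F_j^\ell(\lambda)$ by Lemma~\ref{lem:varepisflambda}. The uniqueness clause of Lemma~\ref{lem:qbij} then forces $\overline{\varepsilon}_j^\ell(\varphi_\lambda) = \varphi_{F_j^\ell(\lambda)}$, as required. Summing over $j$ and $\ell$ with the common signs gives the desired intertwining on the generator $\lambda$, and hence on all of $C_r(\Lambda)$.

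I do not expect a serious obstacle: essentially all the content has been isolated into the earlier lemmas, and what remains is bookkeeping with the alternating sums. The one point meriting a little care is confirming that the sign conventions in~\eqref{eq:bondarydef} and~\eqref{eq:newboundary} match \emph{term by term} (they do, both being $(-1)^{i+\ell}$), so that the single-term identity propagates to the full boundary maps without any sign discrepancy. Once $\Theta_*$ is established as an isomorphism of complexes, it induces isomorphisms on homology in each degree, giving $\overline{H}_*(\Lambda) \cong H_*(\Lambda)$ and completing the proof.
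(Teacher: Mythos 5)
Your proposal is correct and follows essentially the same route as the paper: extend the bijection of Lemma~\ref{lem:qbij} to isomorphisms $C_r(\Lambda)\to\overline{C}_r(\Lambda)$, use Lemma~\ref{lem:varepisflambda} together with the uniqueness in Lemma~\ref{lem:qbij} to identify $\varphi_{F^\ell_j(\lambda)}$ with $\overline{\varepsilon}^\ell_j(\varphi_\lambda)$, and compare the matching signs in~\eqref{eq:bondarydef} and~\eqref{eq:newboundary}. Your explicit appeal to Lemma~\ref{lem:degen}(1) to ensure the faces stay nondegenerate is a detail the paper leaves implicit, but the argument is the same.
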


\begin{proof}
By Lemma~\ref{lem:qbij} the map $\lambda \mapsto\varphi_\lambda$ induces an isomorphism $\theta_r : C_r ( \Lambda )
\to \overline{C}_r ( \Lambda)$. Let  $\lambda \in Q_r ( \Lambda )$.  By Lemma~\ref{lem:varepisflambda} we have
$\theta_{r-1}(F^\ell _i ( \lambda )) = \overline{\varepsilon}^\ell_i ( \varphi_\lambda )$ for $i = 1, \dots , r$ and $\ell = 0,
1$.  Hence, by \eqref{eq:bondarydef} and \eqref{eq:newboundary} we have
\[
\overline{\partial}_r \theta_r(\lambda ) = \theta_{r-1} \partial_r(\lambda)
\]
and the result follows.
\end{proof}

\section{Topological realisations}\label{app:realisations}

Given a $k$-graph $\Lambda$ we show that the topological realisation $X_\Lambda$ of $\Lambda$ is homeomorphic to the
topological realisation $\mathcal{R} \widetilde{Q} ( \Lambda )$ of the associated cubical set $\widetilde{Q} ( \Lambda )$ as
defined in \cite[\S 1.8]{Grandis2005}. We define the cocubical set
 $\bI^* = ( \bI^r, \dot{\varepsilon}^\ell_i , \dot{\eta}_i )$ of \cite{Grandis2005}
as follows  (we modify Grandis' notation to align with ours from Appendix~\ref{app:grandis}).
For $r \ge 0$ let $\bI^r$ be the unit cube in $\RR^r$.  For
$1 \le i \le r+1$ and $\ell \in \{0,1\}$ define the  coface maps
$\dot{\varepsilon}^\ell_i : \bI^r \to \bI^{r+1}$ and  for $1 \le i \le r$
define codegeneracy maps $\dot{\eta}_i : \bI^r \to \bI^{r-1}$  by
\[
\dot{\varepsilon}^\ell_i(t)_j
    = \begin{cases}
        t_j &\text{ if $j < i$}\\
        \ell &\text{ if $j = i$}\\
        t_{j-1} &\text{ if $j > i$}
    \end{cases}
    \qquad\text{ and }\qquad
\dot{\eta}_i(t)_j
    =\begin{cases}
        t_j &\text{if $j < i$} \\
        t_{j+1} &\text{ if $j \ge i$.}
    \end{cases}.
\]

Recall from \cite{Grandis2005} that $\mathcal{R}\widetilde{Q}(\Lambda)$ is a topological space
endowed with maps $\widehat{\varphi} : \bI^r \to \mathcal{R}\widetilde{Q}(\Lambda)$ for each
$\varphi \in \widetilde{Q}_r(\Lambda)$ satisfying
\begin{equation}\label{eq:grandis hat identities}
    \widehat\varphi \circ \dot{\varepsilon}_i^\ell =
    (\overline{\varepsilon}^\ell_i (\varphi))\;\widehat{\,} \qquad\text{ and }\qquad
    \widehat\varphi \circ \dot{\eta}_i = (\overline{\eta}_i (\varphi))\;\widehat{\,},
\end{equation}
and is uniquely determined by the property that for any topological space $X$ and any collection of
continuous maps $\{\widetilde{\varphi} : \bI^r \to X \mid 1 \le r, \varphi \in
\widetilde{Q}_r(\Lambda)\}$ satisfying
\begin{equation}\label{eq:grandis tilde identities}
    \widetilde\varphi \circ \dot{\varepsilon}_i^\ell =
    (\overline{\varepsilon}^\ell_i (\varphi))^\sim \qquad\text{ and }\qquad
    \widetilde\varphi \circ \dot{\eta}_i = (\overline{\eta}_i (\varphi))^\sim,
\end{equation}
there is a unique continuous map $\pi : \mathcal{R}\widetilde{Q}(\Lambda) \to X$ satisfying $\pi
\circ \widehat{\varphi} = \widetilde{\varphi}$ for all $\varphi \in \widetilde{Q}(\Lambda)$.

Fix $\varphi \in \widetilde{Q}_r (\Lambda )$ and let $h : \NN^r \to \NN^k$ be the associated admissible map. As in
\cite{kkqs} extend $h$ to a map from $\RR^r$ to $\RR^k$ by setting $h(t) := \sum^r_{i=1} t_i h(e_i)$. We define a map
$\widetilde\varphi : \bI_r \to X_\Lambda$ by
\begin{equation}\label{eq:tilde phi}
    \widetilde\varphi(t) = [\varphi(0,\1_r),h(t)].
\end{equation}

\begin{lem}\label{lem:tildephis}
Let $\Lambda$ be a $k$-graph. The maps $\widetilde{\varphi} : \bI_r \to X_\Lambda$ of~\eqref{eq:tilde phi} are
continuous, and satisfy~\eqref{eq:grandis tilde identities}.
\end{lem}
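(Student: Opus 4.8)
The plan is to establish the three assertions --- continuity, and each of the two identities in~\eqref{eq:grandis tilde identities} --- by unwinding the definition~\eqref{eq:tilde phi} of $\widetilde\varphi$ together with the defining relation~\eqref{eq:sim def} of $\sim$ and the factorisation property in $\Lambda$. Throughout I write $h$ for the admissible map of $\varphi$.

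For continuity, fix $\varphi\in\widetilde{Q}_r(\Lambda)$ and set $\lambda_0:=\varphi(0,\1_r)$, so that $d(\lambda_0)=h(\1_r)$. I would first observe that, since $h$ is admissible, $0\le h(t)\le h(\1_r)=d(\lambda_0)$ for every $t\in\bI^r$, so the (linear, hence continuous) map $t\mapsto(\lambda_0,h(t))$ sends $\bI^r$ into the single summand $\{\lambda_0\}\times[0,d(\lambda_0)]$ of $\bigsqcup_{\lambda}\{\lambda\}\times[0,d(\lambda)]$. Composing with the inclusion of this summand and with the quotient map onto $X_\Lambda$ exhibits $\widetilde\varphi$ as a composite of continuous maps, giving continuity.

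For the identities I would reduce each to a pointwise equality in $X_\Lambda$ and use two bookkeeping facts about admissible maps. Writing $h'$ and $h''$ for the admissible maps of $\overline{\varepsilon}^\ell_i(\varphi)$ and $\overline{\eta}_i(\varphi)$ as in Remark~\ref{rmk:admissible maps}, a direct linear computation gives $h(\dot\varepsilon^\ell_i(s))=h'(s)+\ell\,h(e_i)$ for $s\in\bI^r$ (compare~\eqref{eq:face admiss}) and $h(\dot\eta_i(t))=h''(t)$ for $t\in\bI^r$ (compare~\eqref{eq:degeneracy admiss}); moreover $\varepsilon^\ell_i$ and $\eta_i$ applied to $(0,\1)$ yield $\overline{\eta}_i(\varphi)(0,\1_r)=\varphi(0,\1_{r-1})$ and, according as $\ell=0$ or $1$, $\overline{\varepsilon}^\ell_i(\varphi)(0,\1_r)$ equals $\varphi(0,\1_{r+1}-e_i)$ or $\varphi(e_i,\1_{r+1})$. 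The degeneracy identity then drops out at once: both $\widetilde\varphi\circ\dot\eta_i$ and $(\overline{\eta}_i(\varphi))^\sim$ send $t$ to $[\varphi(0,\1_{r-1}),h''(t)]$, so they coincide without further work.

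The substance of the argument, and the step I expect to be the main obstacle, is the face identity, where I would set $\lambda:=\varphi(0,\1_{r+1})$ and treat the two orientations separately. When the direction $i$ satisfies $h(e_i)=0$, factorising $\lambda$ shows $\overline{\varepsilon}^\ell_i(\varphi)(0,\1_r)=\lambda$ and $\ell\,h(e_i)=0$, so both sides agree trivially. When $i=i_p$, so $h(e_i)=e_{j_p}$, the key observation is that $h'(s)$ has vanishing $j_p$-coordinate; hence for $\ell=0$ one must check $(\lambda,h'(s))\sim(\lambda(0,d(\lambda)-e_{j_p}),h'(s))$, and for $\ell=1$ one must check $(\lambda,h'(s)+e_{j_p})\sim(\lambda(e_{j_p},d(\lambda)),h'(s))$. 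In each case the fractional parts match, since the second coordinates differ by an integer vector, and the required equality of the paths $\lambda(\lfloor\,\cdot\,\rfloor,\lceil\,\cdot\,\rceil)$ follows from the subpath identity $\lambda(a,b)=\lambda(c,d(\lambda))(a-c,b-c)$ for $c\le a\le b$, applied with $c=0$ and with $c=e_{j_p}$ respectively (this is the computation already underlying Lemma~\ref{lem:varepisflambda}). Carefully tracking the floor and ceiling in the $\ell=1$ case, where the $j_p$-coordinate of $h'(s)+e_{j_p}$ is exactly $1$, is the one place demanding genuine care; the remainder is routine.
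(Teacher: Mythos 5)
Your proposal is correct and follows essentially the same route as the paper: continuity via factoring $\widetilde\varphi$ through the single summand $\{\varphi(0,\1_r)\}\times[0,h(\1_r)]$ and the quotient map, and the face identity via the computation $h(\dot\varepsilon^\ell_i(s))=h'(s)+\ell h(e_i)$, matching of fractional parts, and the factorisation property to identify the relevant subpaths under $\sim$. The only (harmless) differences are that you write out the degeneracy identity explicitly where the paper dismisses it as similar, and you split the face case according to whether $h(e_i)=0$, which the paper's uniform argument absorbs automatically.
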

\begin{proof}
Fix $\varphi \in \widetilde{Q}_r(\Lambda)$ with associated admissible map $h$. Since $t \mapsto (\varphi(0, \1_{r}), h(t))$
is continuous from $\bI^r$ to $\{\varphi(0, \1_{r})\} \times [0, h(\1_r)]$, and since the quotient map from
$\bigsqcup_{\lambda \in Q(\Lambda)} \{\lambda\} \times [0, d(\lambda)]$ to $X_\Lambda$ is also continuous, the map
$\widetilde{\varphi}$ is continuous. We check the identities~\eqref{eq:grandis tilde identities}. The calculations are routine but
tedious so we only give a detailed proof of the first identity $\widetilde\varphi \circ \dot{\varepsilon}_i^\ell =
(\overline{\varepsilon}^\ell_i (\varphi))^\sim$, this being the more complicated of the two calculations. The second identity
follows from similar calculations. Define $h': \RR^{r-1} \to \RR^k$
as in Remark~\ref{rmk:admissible maps} by $h' ( t_1 , \ldots , t_{r-1} )
= h ( t_1 , \ldots, t_{i-1}, 0, t_i , \ldots , t_{r-1})$. For $t \in \bI_{r-1}$
\begin{align}
(\widetilde\varphi \circ \dot{\varepsilon}_i^\ell)(t)
    =\widetilde\varphi(\dot{\varepsilon}_i^\ell(t))
    =[\varphi(0,\1_{r}), h(\dot{\varepsilon}_i^\ell(t))] %\nonumber\\
    =[\varphi(0,\1_{r}), h'(t) + \ell h(e_i)].\label{eq:compatibility 1}
\end{align}
Since $h'$ is the admissible map associated to $\overline{\varepsilon}_i^\ell(\varphi)$, we also have
\begin{align}
(\overline{\varepsilon}_i^\ell(\varphi))^\sim(t)
    &= [\overline{\varepsilon}_i^\ell(\varphi)(0,\1_{r-1}), h'(t)]
    = [\varphi(\varepsilon_i^\ell (0 , \1_{r-1})), h'(t)]
\label{eq:compatibility 2}
\end{align}
Since $\ell$ is an integer, $h'(t) + \ell h(e_i) - \lfloor h'(t) + \ell h(e_i) \rfloor = h'(t) -
\lfloor h'(t) \rfloor$. Moreover, by the factorisation property, we have
\[
\varphi(0,\1_{r})
    = \varphi(0, \varepsilon_i^\ell(0))\varphi(\varepsilon_i^\ell(0 , \1_{r-1})) \varphi(\varepsilon_i^\ell(\1_{r-1}), \1_r).
\]
Hence, considering separately the cases $\ell = 0$ and $\ell = 1$, one can verify that
\[
\varphi(0,\1_{r})(\lfloor h'(t) + \ell h(e_i) \rfloor, \lceil h'(t) + \ell h(e_i) \rceil)
    = \varphi(\varepsilon_i^\ell(0 , \1_{r-1}))(\lfloor h'(t)\rfloor, \lceil h'(t) \rceil)).
\]
The definition~\eqref{eq:sim def} of the equivalence relation $\sim$ then gives
\[
(\varphi(0,\1_{r}), h'(t) + \ell h(e_i))
    \sim (\varphi(\varepsilon_i^\ell(0 , \1_{r-1})), h'(t)).
\]
Combining this with \eqref{eq:compatibility 1}~and~\eqref{eq:compatibility 2} establishes the first identity
in~\eqref{eq:grandis tilde identities}.
\end{proof}

By Lemma~\ref{lem:tildephis} and the defining property of $\mathcal{R}\widetilde{Q}(\Lambda)$,
there is a unique continuous map $\pi : \mathcal{R}\widetilde{Q}(\Lambda) \to X_\Lambda$ such that
$\pi \circ \widehat{\varphi} = \widetilde{\varphi}$ for all $\varphi \in \widetilde{Q}(\Lambda)$.

\begin{thm}
Let $\Lambda$ be a $k$-graph. The map $\pi:\mathcal{R}\widetilde{Q}(\Lambda) \to X_\Lambda$ is a homeomorphism.
\end{thm}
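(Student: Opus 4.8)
The plan is to produce an explicit continuous inverse $\rho : X_\Lambda \to \mathcal{R}\widetilde{Q}(\Lambda)$ for $\pi$. Since $X_\Lambda$ is the quotient of $\bigsqcup_{\lambda\in\Lambda}\{\lambda\}\times[0,d(\lambda)]$ by $\sim$, I would build $\rho$ from continuous maps $\rho_\lambda:[0,d(\lambda)]\to\mathcal{R}\widetilde{Q}(\Lambda)$. For $t\in[0,d(\lambda)]$ set $\kappa:=\lambda(\lfloor t\rfloor,\lceil t\rceil)$; this is a nondegenerate cube in $Q_r(\Lambda)$, where $r$ is the number of non-integer coordinates of $t$, occurring at positions $i_1<\dots<i_r$. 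Let $\varphi_\kappa\in\overline{Q}_r(\Lambda)$ be the cube from Lemma~\ref{lem:qbij}, let $u:=(t_{i_1}-\lfloor t_{i_1}\rfloor,\dots,t_{i_r}-\lfloor t_{i_r}\rfloor)\in\bI^r$, and define $\rho_\lambda(t):=\widehat{\varphi_\kappa}(u)$. Intuitively $\rho_\lambda(t)$ records the unique cube containing $[\lambda,t]$ together with its interior coordinates, which is exactly the data that $\sim$ preserves.

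Compatibility with $\sim$ is then immediate from~\eqref{eq:sim def}: if $(\mu,s)\sim(\nu,t)$ then $\mu(\lfloor s\rfloor,\lceil s\rceil)=\nu(\lfloor t\rfloor,\lceil t\rceil)$ and $s-\lfloor s\rfloor=t-\lfloor t\rfloor$, so the pair $(\kappa,u)$ is the same for both and $\rho_\mu(s)=\rho_\nu(t)$; hence the $\rho_\lambda$ descend to a set map $\rho$. The substantive point is continuity of each $\rho_\lambda$. I would subdivide the compact box $[0,d(\lambda)]$ into its finitely many closed lattice subfaces; on each subface the set of varying coordinates is constant, so $\kappa$ is a fixed cube $\lambda(a,b)$ and $\rho_\lambda$ restricts to the continuous map $u\mapsto\widehat{\varphi_{\lambda(a,b)}}(u)$. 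To apply the pasting lemma I must check these agree on shared subfaces: as a coordinate of $t$ becomes integral, $\kappa$ passes to a face $F_j^\ell(\kappa)$, and Lemma~\ref{lem:varepisflambda} with the uniqueness clause of Lemma~\ref{lem:qbij} gives $\varphi_{F_j^\ell(\kappa)}=\overline{\varepsilon}_j^\ell(\varphi_\kappa)$, whence the coface identity in~\eqref{eq:grandis hat identities}, namely $\widehat{\varphi_\kappa}\circ\dot{\varepsilon}_j^\ell=\widehat{\overline{\varepsilon}_j^\ell(\varphi_\kappa)}=\widehat{\varphi_{F_j^\ell(\kappa)}}$, shows the two formulas coincide on the common face. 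Thus $\rho_\lambda$ is continuous, and $\rho$ is continuous by the universal property of the quotient $X_\Lambda$.

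Finally I would verify $\pi\circ\rho=\id$ and $\rho\circ\pi=\id$. For the first, $\pi(\rho([\lambda,t]))=\widetilde{\varphi_\kappa}(u)$ equals $[\kappa,\sum_j u_j e_{i_j}]=[\lambda(\lfloor t\rfloor,\lceil t\rceil),\,t-\lfloor t\rfloor]$ by~\eqref{eq:tilde phi} and $\varphi_\kappa(0,\1_r)=\kappa$, and a direct application of~\eqref{eq:sim def} shows this equals $[\lambda,t]$. For the second, the images $\widehat{\varphi}(\bI^r)$ cover $\mathcal{R}\widetilde{Q}(\Lambda)$, so it suffices to evaluate $\rho$ on $\pi(\widehat{\varphi}(s))=\widetilde{\varphi}(s)=[\varphi(0,\1_r),h(s)]$. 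When $\varphi=\varphi_\lambda$ is nondegenerate of rank $r$, unwinding the definition of $\rho$ recovers $\widehat{\varphi_\lambda}(s)$ on the interior of $\bI^r$ and, on $\partial\bI^r$, uses the coface identity exactly as above to match boundary values; when $\varphi=\overline{\eta}_i(\psi)$ is degenerate, the codegeneracy identities $\widetilde{\varphi}=\widetilde{\psi}\circ\dot{\eta}_i$ from~\eqref{eq:grandis tilde identities} and $\widehat{\varphi}=\widehat{\psi}\circ\dot{\eta}_i$ from~\eqref{eq:grandis hat identities} reduce the computation to the lower-dimensional nondegenerate cube $\psi$. Hence $\rho$ is a two-sided continuous inverse and $\pi$ is a homeomorphism.

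The main obstacle is the continuity of $\rho_\lambda$: one must match the combinatorics of the floor/ceiling decomposition of $[0,d(\lambda)]$ with the coface maps of Grandis' realisation, ensuring that the cube $\kappa$ and the coordinate vector $u$ vary compatibly as $t$ crosses integer hyperplanes. Once this bookkeeping is settled via~\eqref{eq:grandis hat identities} and the pasting lemma, the remaining verifications are routine unwindings of the definitions of $\sim$, $\widetilde{\varphi}$, and $\rho$.
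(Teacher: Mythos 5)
Your proposal is correct and is essentially the paper's argument: both construct the inverse by sending a point of $X_\Lambda$ to $\widehat{\varphi}_\kappa$ evaluated at its fractional coordinates, with the coface identities~\eqref{eq:grandis hat identities} (fed by Lemmas \ref{lem:qbij}~and~\ref{lem:varepisflambda}) doing the real work. The only difference is where that work lands --- the paper defines the inverse on $\bigsqcup_{\lambda \in Q(\Lambda)}\{\lambda\}\times[0,d(\lambda)]$ so that continuity is immediate and the coface identities are used to check compatibility with $\sim$, whereas you normalise by $\lfloor t\rfloor$ first so that $\sim$-compatibility is immediate and the coface identities are used (with the pasting lemma over the lattice subdivision) to check continuity; you also verify both composites explicitly, which the paper only asserts.
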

\begin{proof}
We construct a continuous inverse $\psi$ for $\pi$. Define $\psi_0 : \bigsqcup_{d(\lambda) \le
\1_k} \{\lambda\} \times [0, d(\lambda)] \to \mathcal{R}\widetilde{Q} (\Lambda )$ by
\[
    \psi_0(\lambda, t) := \widehat{\varphi}_\lambda(t),
\]
where $\varphi_\lambda : \Cube{|\lambda|} \to \Lambda$ is the $k$-graph quasimorphism canonically
associated to $\lambda$. The map $\psi_0$ is clearly continuous.

If $\psi([\mu,s]) := \psi_0(\mu,s)$ determines a well-defined map $\psi : X_\Lambda \to
\mathcal{R}\widetilde{Q}(\Lambda)$, then it will be continuous by definition of the topology on $X_\Lambda$, and will be
an inverse for $\pi$. So suppose that $(\mu,s) \sim (\nu,t)$ where $\mu,\nu \in Q(\Lambda)$. Let $I_{(\mu,s)} := \{j :
d(\mu)_j = 1\text{ and }s_j \in \{0,1\}\}$, and define $I_{(\mu,t)}$ similarly. List $I_{(\mu,s)} = \{j_1, \dots, j_p\}$
where $j_1  < \cdots < j_p$. Define $F_{(\mu, s)}$ to be the composition of face maps $F_{(\mu, s)} =
F^{s_{j_1}}_{j_1} \circ \cdots \circ F^{s_{j_p}}_{j_p}$ (with the convention that if $I_{(\mu,s)} = \emptyset$, then
$F_{(\mu,s)}$ is the identity map), and define $F_{(\nu,t)}$ similarly. Then
\[
F_{(\mu,s)}(\mu) = \mu(\lfloor s \rfloor, \lceil s \rceil)
    =  \nu(\lfloor t \rfloor, \lceil t \rceil) = F_{(\nu,t)}(\nu)
\]
because $[\mu,s] = [\nu,t]$. Let $s' := s - \lfloor s \rfloor$ and $t' := t - \lfloor t \rfloor$.
Then
\[
(\mu,s) \sim (F_{(\mu,s)}(\mu), s') = (F_{(\nu,t)}(\nu), t') \sim (\nu,t),
\]
so it suffices to show that $\psi_0(\mu,s) = \psi_0(F_{(\mu,s)}(\mu), s')$. Let
$\dot{\varepsilon}_{(\mu,s)} : \bI_{|\mu| - |I_{(\mu,s)}|} \to \bI_{|\mu|}$ be the composition
$\dot{\varepsilon}^{s_{j_p}}_{j_p} \circ \cdots \circ \dot{\varepsilon}^{s_{j_1}}_{j_1}$. Let
$\overline{\varepsilon}_{(\mu,s)}$ be the composition of face maps in $\widetilde{Q} ( \Lambda )$
corresponding to $F_{(\mu,s)}$. It is routine to see that
\[
\varphi_{F_{(\mu,s)}(\mu)} = \overline{\varepsilon}_{(\mu,s)}(\varphi_\mu).
\]
Hence the identities~\eqref{eq:grandis hat identities} imply that
\[
\widehat{\varphi}_{F_{(\mu,s)}(\mu)}
    = (\overline{\varepsilon}_{(\mu,s)}(\varphi_\mu))\ \widehat{}
    = \widehat{\varphi}_\mu \circ \dot{\varepsilon}_{(\mu,s)}.
\]
In particular, if $\overline{s}$ and $\overline{s}'$ are the elements of $\bI_{|\mu|}$ and
$\bI_{|\mu| - |I_{(\mu,s)}|}$ which map to $s$ and $s'$ under the associated admissible maps, then
\[
\psi_0(F_{(\mu,s)}(\mu), s')
    = \widehat{\varphi}_{F_{(\mu,s)}(\mu)}(\overline{s}')
    = \widehat{\varphi}_\mu \circ \dot{\varepsilon}_{(\mu,s)}(\overline{s}')
    = \widehat{\varphi}_\mu(\overline{s})
    = \psi_0(\mu,s). \qedhere
\]
\end{proof}

\end{document}